\title{The nonabelian product modulo sum}
\author{Samuel M. Corson}
\theoremstyle{definition}\newtheorem{theorem}{Theorem}
\theoremstyle{definition}
\theoremstyle{definition}
\theoremstyle{definition}
\theoremstyle{definition}
\theoremstyle{definition}
\newtheorem*{hugetheorem}{Main Theorem}
\numberwithin{theorem}{section}
\theoremstyle{definition}
\theoremstyle{definition}\newtheorem{proposition}[theorem]{Proposition}
\theoremstyle{definition}\newtheorem{definition}[theorem]{Definition}
\theoremstyle{definition}
\theoremstyle{definition}\newtheorem{example}[theorem]{Example}
\theoremstyle{definition}
\theoremstyle{definition}
\theoremstyle{definition}\newtheorem{lemma}[theorem]{Lemma}
\theoremstyle{definition}
\theoremstyle{definition}
\theoremstyle{definition}
\theoremstyle{definition}\newtheorem{obs}[theorem]{Observation}
\theoremstyle{definition}\newtheorem{definitions}[theorem]{Definitions}
\newcommand{\Red}{\operatorname{Red}}
\newcommand{\Let}{\operatorname{Let}}
\newcommand{\im}{\operatorname{im}}
\newcommand{\Sub}{\operatorname{Sub}}
\newcommand{\Fine}{\operatorname{Fine}}
\newcommand{\set}{\operatorname{set}}
\newcommand{\W}{\mathcal{W}}
\newcommand{\Close}{\operatorname{Close}}
\newcommand{\coi}{\operatorname{coi}}
\newcommand{\dom}{\operatorname{dom}}
\def\pmc#1{\setbox0=\hbox{#1}
    \kern-.1em\copy0\kern-\wd0
    \kern.1em\copy0\kern-\wd0}
\DeclareMathOperator{\topprod}{\circledast}
\begin{document}

\address{Matematika Saila, UPV/EHU, Sarriena S/N, 48940, Leioa - Bizkaia, Spain.}
\email{sammyc973@gmail.com}

\keywords{algebraically compact group, infinite word, topologist's product}
\subjclass[2010]{Primary 03E75, 20A15, 55Q52; Secondary 20F10, 20F34}
\thanks{This work was supported by the Additional Funding Programme for Mathematical Sciences, delivered by EPSRC (EP/V521917/1) and the Heilbronn Institute for Mathematical Research}

\begin{abstract}  It is shown that if $\{H_n\}_{n \in \omega}$ is a sequence of groups without involutions, with $1 < |H_n| \leq 2^{\aleph_0}$, then the topologist's product modulo the finite words is (up to isomorphism) independent of the choice of sequence.  This contrasts with the abelian setting: if $\{A_n\}_{n \in \omega}$ is a sequence of countably infinite torsion-free abelian groups, then the isomorphism class of the product modulo sum $\prod_{n \in \omega} A_n/\bigoplus_{n \in \omega} A_n$ is dependent on the sequence.

\end{abstract}

\maketitle

\begin{section}{Introduction}

A classical theorem of Balcerzyk states that if $\{A_n\}_{n \in \omega}$ is a sequence of abelian groups, indexed by the set $\omega$ of natural numbers, then the quotient $$\prod_{n \in \omega} A_n/\bigoplus_{n\in \omega} A_n$$ is algebraically compact \cite[Corollary 6.1.12]{Fu}.  It is unsurprising to learn that the isomorphism class of this quotient can alter as one varies the collection $\{A_n\}_{n\in \omega}$.  For example, if each $A_n$ contains an involoution (i.e. an element of order $2$) then so does the quotient.  If all $A_n$ are torsion-free then so is the quotient.  But even when all $A_n$ are torsion-free (or even more particularly of rank $1$) one can obtain various non-isomorphic groups as quotient.  If $A_n \simeq \mathbb{Q}$ for all $n$, the quotient is abstractly isomorphic to $\mathbb{R}$.  In case $A_n \simeq \mathbb{Z}$ for all $n$, the quotient is abstractly isomorphic to $\mathbb{R} \oplus J$ where $J$ is reduced and uncountable \cite[Exercise 6.3.9]{Fu}.  We show that in the analogous nonabelian context there is less variation.

The natural nonabelian replacement for the product operation is the \emph{topologist's product} \cite{E}.  The topologist's product $\topprod_{n \in \omega} H_n$ is an infinitary extension of the free product, the essential difference being that infinite words (not just finite ones) are considered.  While $\bigoplus_{n \in \omega} A_n$ is the set of elements of finite support in $\prod_{n \in \omega}A_n$, the subgroup of finite words in $\topprod_{n \in \omega} H_n$ is precisely the free product $*_{n \in \omega} H_n$.   As the free product $*_{n \in \omega} H_n$ is not a normal subgroup we will be considering the quotient $\topprod_{n \in \omega} H_n/\langle\langle *_{n \in \omega}H_n\rangle\rangle$ where $\langle\langle \cdot \rangle\rangle$ denotes the normal closure.  For economy we denote this quotient by $\mathcal{A}(\{H_n\}_{n\in \omega})$, or simply $\mathcal{A}$ in case all $H_n$ are infinite cyclic.  The upshot is the following.

\begin{hugetheorem}\label{bigisomorphism}  If $\{H_n\}_{n \in \omega}$ is a sequence of groups without involutions such that $1 < |H_n| \leq 2^{\aleph_0}$ for all $n$ then $\mathcal{A}(\{H_n\}_{n \in \omega}) \simeq \mathcal{A}$.
\end{hugetheorem}

For example, some of the $H_n$ could be the group of cardinality $3$, or the additive group on the reals, or Thompson's group $F$.  Despite the generality of the theorem, it is reasonably sharp.  One can also allow finitely many of the $H_n$ to have cardinality greater than $2^{\aleph_0}$ and still obtain the same conclusion since the operation $\mathcal{A}(\cdot)$ does not change isomorphism type if finitely many groups in the sequence $\{H_n\}_{n \in \omega}$ are deleted (see Lemma \ref{nicefactsaboutarch}).  However if $|H_n| \leq 2^{\aleph_0}$ is violated at infinitely many $n$ then $|\mathcal{A}(\{H_n\}_{n \in \omega})| > 2^{\aleph_0}$  \cite[Theorem 9]{CHM} and so one obtains a group of cardinality strictly greater than $2^{\aleph_0} = |\mathcal{A}|$.

The isomorphism produced is nonconstructive; a transfinite induction of length $2^{\aleph_0}$ involving various arbitrary choices along the way is utilized (a back-and-forth argument over larger and larger subgroups).  Possibly one can allow elements of order $2$ in the $H_n$, but the attack would require serious modification (see Example \ref{nastyword}).  We note that such elements also create difficulties in classical word combinatorics (e.g. the Burnside problem, see  \cite[page 139]{Ol}).

The theorem has as corollaries the first claim of \cite[Theorem A]{CHM} (the proof was incorrect in that paper) and an affirmative answer to \cite[Question 2]{CHM}.  The reason that their proof was incorrect is that their homomorphism is not injective (see \cite[Corollary 16]{CHM}).  The referee has pointed out that their approach does in fact give an isomorphism between the subgroups of $\mathcal{A}$ and $\mathcal{A}(\{H_n\}_{n \in \omega})$ consisting of those words the Dedekind completion of whose domain is scattered.  Our approach is a careful transfinite induction over the continuum, extending isomorphisms between larger and larger subgroups via a back-and-forth argument.  The key is to analyze words of infinite length and show that if fewer than a continuum of certain choices have been made in producing the partial isomorphisms then it is always possible to make one more choice in a way which is consistent with the previous ones.  This will in particular ensure that no unforseen cancellations will arise.

The group $\mathcal{A}$ is isomorphic to the fundamental group of the harmonic archipelago mentioned in \cite{BS}, which is now known to be isomorphic to the fundamental group of the Griffiths space constructed in \cite{G} (see \cite{Cors}).  Another consequence of our main result is that, for $\{H_n\}_{n\in \omega}$ as in the theorem, $\mathcal{A}(\{H_n\}_{n \in \omega})$ is locally free and every countable locally free group embeds into it (see \cite[Theorem 2]{Ho}).

We give the layout of the paper.  In Section \ref{topprod} we establish some background results and definitions for the topologist's product.  In Sections \ref{Howtoisomorphize}, \ref{uncomplicated}, and \ref{Qconcatenation} we adapt machinery from \cite{Cors} to construct partial isomorphisms with larger and larger domain and codomain.  In Section \ref{Finally} the proof of the main theorem os finished.

\end{section}

\begin{section}{The topologist's product}\label{topprod}

In this section we concern ourselves with an exposition of the topologist's product (also known as the $\sigma$-product \cite{E}).  We'll begin by establishing some notation for total orders.  Given two totally ordered sets $\Lambda_0$, $\Lambda_1$ we will write $\Lambda_0 \equiv \Lambda_1$ if there exists an order isomorphism between them.  The \emph{concatenation} of totally ordered sets $\Lambda_0$ and $\Lambda_1$ will be written $\Lambda_0\Lambda_1$ and is the disjoint union $\Lambda_0 \sqcup \Lambda_1$ ordered by the unique order which extends that of $\Lambda_0$ and that of $\Lambda_1$ and places elements of $\Lambda_0$ below those of $\Lambda_1$.  More generally if $\{\Lambda_{\lambda}\}_{\lambda \in \Lambda}$ is a collection of totally ordered sets which is indexed by a totally ordered set we let $\prod_{\lambda \in \Lambda} \Lambda_{\lambda}$ denote the disjoint union $\bigsqcup_{\lambda \in \Lambda} \Lambda_{\lambda}$ ordered under the unique order which extends all of the orders $\Lambda_{\lambda}$ and places elements in $\Lambda_{\lambda}$ below those in $\Lambda_{\lambda'}$ whenever $\lambda < \lambda'$.  We say an interval $I \subseteq \Lambda$ is \emph{initial} provided every element in $I$ is below all elements in $\Lambda \setminus I$, and a \emph{terminal} interval is defined analogously.  For a totally ordered set $\Lambda$ we will let $\Lambda^{-1}$ denote the set $\Lambda$ under the reverse order.

Many of the word concepts which will be defined here will also apply to a collection of groups which is uncountable (see \cite{E}), but in our paper we are only concerned with a collection indexed by the set $\omega$ of natural numbers.  Let $\{G_n\}_{n\in \omega}$ be a collection of groups, and we will regard them as having the same identity element $1$ and not having any other elements in common: $G_{n_0} \cap G_{n_1} = \{1\}$ for $n_0 \neq n_1$.  A \emph{word} is a function $W: \overline{W} \rightarrow \bigcup_{n \in \omega} G_n$ such that the domain $\overline{W}$ is a totally ordered set and for each $n$ the set $\{i \in \overline{W} : W(i) \in G_n\}$ is finite (it follows immediately that $\overline{W}$ is countable).  We point out that $\overline{W}$ could be of any countable order type, including $\mathbb{Q}$.

We shall sometimes refer to the set $\bigcup_{n \in \omega} G_n$ as the set of \emph{letters}.  We will use some of the notation defined above for totally ordered sets in this setting as well.  We will consider two words $W_0$ and $W_1$ to be the same, writing $W_0 \equiv W_1$, provided there exists an order isomorphism $\iota: \overline{W_0} \rightarrow \overline{W_1}$ such that $W_0(i) = W_1(\iota(i))$ for all $i \in \overline{W_0}$.  Let $\W(\{G_n\}_{n \in \omega})$ denote the set of words, regarding two words as being the same if they are $\equiv$ equivalent.  

Given two words $W_0, W_1 \in \W(\{G_n\}_{n \in \omega})$ we form the \emph{concatenation}, denoted $W_0W_1$, by endowing $W_0W_1$ with domain which is the totally ordered set $\overline{W_0}$ $\overline{W_1}$ and letting $(W_0W_1)(i) = W_j(i)$ where $i \in \overline{W_j}$.  More generally, given a collection $\{W_{\lambda}\}_{\lambda \in \Lambda}$ of words indexed by a totally ordered set $\Lambda$ we form a function $\prod_{\lambda \in \Lambda}  W_{\lambda}$ whose domain is the concatenation $\prod_{\lambda \in \Lambda} \overline{W_{\lambda}}$ and letting $(\prod_{\lambda \in \Lambda} W_{\lambda})(i) = W_{\lambda'}(i)$ where $i \in \overline{W_{\lambda'}}$.  This function $\prod_{\lambda \in \Lambda} W_{\lambda}$ might or might not be a word.  Given a word $W$ we form the \emph{inverse}, $W^{-1}$, by letting $W^{-1}$ have domain equal to $\overline{W}$ under the reverse order, $\overline{W}^{-1}$, and letting $W^{-1}(i) = (W(i))^{-1}$.

For a word $W$ and $N \in \omega$ we define $p_N(W)$ to be the finite word given by the restriction $p_N(W) = W \upharpoonright \{i \in \overline{W} : W(i) \in \bigcup_{n = 0}^N G_n\}$.  Define an equivalence relation $\sim$ by letting $W_0 \sim W_1$ if for every $N \in \omega$ the words $p_N(W_0)$ and $p_N(W_1)$ are equal as elements in the free product $*_{n = 0}^N G_n$.  The group $\topprod_{n \in \omega} G_n$ is the set of words modulo $\sim$, the binary operation is given by concatenation $(W_0/\sim)(W_1/\sim) = (W_0W_1)/\sim$, inverses are given by $(W/\sim)^{-1} = W^{-1}/\sim$ and the identity element is $E/\sim$ where $E$ denotes the empty word.  The free product $*_{n \in \omega} G_n$ is a subgroup of $\topprod_{n\in \omega}G_n$ consisting of those equivalence classes which include a finite word.  Each word operation $p_N$ defines a homomorphic retraction from $\topprod_{n \in \omega} G_n$ to $*_{n = 0}^N G_n$.  When each of the groups $G_n$ is isomorphic to the integers, the group $\topprod_{n \in \omega} G_n$ is isomorphic to the fundamental group of the shrinking wedge of circles.

We'll say $W_1$ is a \emph{subword} of $W$ provided we can write $W \equiv W_0W_1W_2$ for some words $W_0$ and $W_2$.  A subword is \emph{initial} if in the writing we can have $W_0 \equiv E$ and is \emph{terminal} if we can have $W_2 \equiv E$.  A word $W$ is \emph{reduced} if $W \equiv W_0W_1W_2$ and $W_1 \sim E$ implies $W_1 \equiv E$, and for any neighboring $i_0, i_1 \in \overline{W}$ the letters $W(i_0)$ and $W(i_1)$ lie in distinct $G_n$.  Furthermore $W$ is \emph{quasi-reduced} if $W \equiv W_0W_1W_2$ and $W_1 \sim E$ implies that $\im(W_1) \subseteq G_n$ for some $n \in \omega$ and there exists $i\in \overline{W}$, with either $i = \max(W_0)$ or $i = \min(W_2)$, and $W(i) \in G_n \setminus \{1\}$.  Quasi-reduced means that one obtains a reduced word by multiplying all neighboring letters which belong to a common $G_n$.  For the following, see Theorem 1.4 and Corollaries 1.5 and 1.7 of \cite{E}.

\begin{lemma}\label{Eda}  Each $\sim$ class includes a reduced word, and this word is unique up to $\equiv$.  If $W_0$ and $W_1$ are reduced and $W_0W_1 \sim E$ then $W_1 \equiv W_0^{-1}$.  If $W$ and $W'$ are reduced then there exist reduced words $W_0$, $W_1$, $W_0'$, and $W_1'$ such that

\begin{enumerate}
\item $W \equiv W_0W_1$;

\item $W' \equiv W_0'W_1'$;

\item $W_0' \equiv (W_1)^{-1}$; and

\item $W_0W_1'$ is quasi-reduced.
\end{enumerate}

\noindent Given a word $W$ we will let $\Red(W)$ denote the reduced word such that $W \sim \Red(W)$.
\end{lemma}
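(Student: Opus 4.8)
The plan is to reduce everything to ordinary free‑product combinatorics for finite words via the retractions $p_N\colon \topprod_{n\in\omega}G_n\to *_{n=0}^N G_n$. Write $\rho(v)$ for the reduced form of a finite word $v$ in a finite free product; the elementary facts I would use are that $p_N(W)$ is always finite, that $p_M\circ p_N=p_M$ and $\rho(p_M(v))\equiv\rho(p_M(\rho(v)))$ for $M\le N$, and that (by definition of $\sim$) $W\sim W'$ if and only if $\rho(p_N(W))\equiv\rho(p_N(W'))$ for all $N$. I would prove the existence half of (1) first, then clause (3), and finally deduce clause (2) and the uniqueness half of (1) from these.

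For existence of a reduced representative: given a word $W$, declare a position $i\in\overline W$, say with $W(i)\in G_n$, to be \emph{persistent} if for every $N\ge n$ the letter $i$ survives as a block‑representative in the free reduction of $p_N(W)$, and let $\overline R$ be the set of persistent positions with the induced order, $R:=W\upharpoonright\overline R$. Then $R$ is a word, since each $G_n$ meets $\overline R$ inside the finite set $\{i\in\overline W:W(i)\in G_n\}$; $R$ is reduced; and $p_N(R)\equiv\rho(p_N(W))$ for every $N$, so $R\sim W$. Verifying these three points — especially the last, that persistence at level $N$ records exactly the letters of $W$ that survive the reduction of $p_N(W)$, coherently through the limit over $N$ — is a routine but slightly delicate argument.

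For clause (3), note first that a subword of a reduced word is reduced, so once decompositions $W\equiv W_0W_1$ and $W'\equiv W_0'W_1'$ are produced, reducedness of the four pieces is automatic; the task is to locate the maximal cancelling overlap between $W$ and $W'$. At each finite level, ordinary free‑product cancellation gives $\rho(p_N(W))\equiv u_Nc_N$ and $\rho(p_N(W'))\equiv c_N^{-1}u_N'$ with $u_Nu_N'$ quasi‑reduced, and these splittings cohere as $N$ increases. I would then let $W_1$ be the terminal subword of $W$ consisting of those positions that are eventually (in $N$) inside the cancelled block $c_N$, let $W_0'$ be the corresponding initial subword of $W'$, check that $W_0'\equiv W_1^{-1}$, and check that the complementary pieces $W_0,W_1'$ satisfy that $p_N(W_0W_1')$ is the quasi‑reduced word $u_Nu_N'$ for every $N$, which forces $W_0W_1'$ quasi‑reduced. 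Making ``eventually inside $c_N$'' precise, and proving that it cuts $W$ and $W'$ at honest subword boundaries and captures exactly the maximal cancellation — in particular when the overlap has limit order type, so that the cancellation must pass through infinitely many stages — is the crux of the lemma and the step I expect to fight hardest for.

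Finally, for clause (2): given reduced $W_0,W_1$ with $W_0W_1\sim E$, apply clause (3) to the pair $(W_0,W_1)$ to obtain $W_0\equiv AB$, $W_1\equiv A'B'$ with $A'\equiv B^{-1}$ and $AB'$ quasi‑reduced; then $W_0W_1\sim AB'\sim E$. A quasi‑reduced word that is $\sim E$ must be $\equiv E$: multiplying its common‑group neighbours yields a reduced word $\sim E$, hence $\equiv E$, and were the original nonempty it would contain a maximal common‑group block which is $\sim E$, contradicting quasi‑reducedness; so $A\equiv B'\equiv E$ and $W_1\equiv A'\equiv B^{-1}\equiv W_0^{-1}$. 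The uniqueness half of (1) follows: if $W,W'$ are reduced with $W\sim W'$, then $W\,(W')^{-1}\sim E$ with both factors reduced (inverses of reduced words are reduced), so by (2) $(W')^{-1}\equiv W^{-1}$, i.e.\ $W'\equiv W$; and $\Red(W)$ is then well defined.
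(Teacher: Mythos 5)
The paper does not give a proof of this lemma at all; it simply cites Eda's results (the sentence immediately before the statement reads ``see Theorem 1.4 and Corollaries 1.5 and 1.7 of \cite{E}''), so your proposal is an independent reconstruction rather than something to be compared against an argument in the paper. Your overall plan --- reduce to the normal-form theory of the finite free products $*_{n=0}^N G_n$ via the retractions $p_N$ and pass to the limit over $N$ --- is the right strategy and is essentially how Eda argues, and the logical chain you set up (existence of a reduced representative, then clause~(3), then clause~(2) and uniqueness) is sound, with the reduction of (2) and uniqueness to (3) done correctly.

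However, the construction you give for the existence half of (1) has a real gap. You set $R := W\upharpoonright\overline{R}$, where $\overline{R}$ is the set of persistent positions; but a restriction of $W$ keeps the original letters of $W$, whereas the reduced form of $W$ must in general contain merged-block products that do not occur as letters of $W$ at all. Minimal counterexample: $W\equiv g_1 g_2$ with $g_1,g_2\in G_0\setminus\{1\}$ and $g_1 g_2\neq 1$; the reduced word in the $\sim$-class of $W$ is the one-letter word whose sole letter is the product $g_1 g_2$, and this is not $W\upharpoonright\overline{R}$ for any $\overline{R}\subseteq\overline{W}$. Your phrase ``survives as a block-representative'' gestures at the right fix (each letter of $\rho(p_N(W))$ is the product over a block of positions of $p_N(W)$), but you then discard it by taking $R(i)=W(i)$. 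What is actually needed is to pick one position $i$ per \emph{eventual} block and relabel it by the product over that block, after showing that the block decomposition stabilizes as $N\to\infty$ (this does hold, using that $p_N$ is a retraction and that each $\{i : W(i)\in G_n\}$ is finite). The same position-versus-block mismatch resurfaces in your sketch of clause~(3): ``positions of $W$ eventually inside $c_N$'' only makes sense once a block correspondence between $\overline{W}$ and $\overline{\rho(p_N(W))}$ has been fixed and shown coherent in $N$. You correctly flag clause~(3) as the hard step, but the difficulty is already present in clause~(1) as written.
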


\begin{obs}\label{howtomultuply}  In the notation of Lemma \ref{Eda}, since the words $W_0$ and $W_1'$ are reduced and $W_0W_1'$ is quasi-reduced, there exists an initial interval $I \subseteq \overline{W_0}$ and a terminal interval $I' \subseteq \overline{W_1'}$ such that either

\begin{enumerate}[(a)]

\item $|\overline{W_0} \setminus I| = 0 = |\overline{W_1'} \setminus I'|$ (in case $W_0W_1'$ is reduced); or

\item $|\overline{W_0} \setminus I| = 1 = |\overline{W_1'} \setminus I'|$ (in case $W_0W_1'$ is not reduced).
\end{enumerate}

\noindent In case (b) one obtains $\Red(W_0W_1')$ by performing the multiplication $$W_0(\max(\overline{W_0}))W_1'(\min(\overline{W_1'}))$$ in the common group where they lie.
\end{obs}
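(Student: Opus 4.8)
The plan is to use the reducedness of $W_0$ and $W_1'$ to confine every possible failure of reducedness of $W_0W_1'$ to the single junction where $\overline{W_0}$ abuts $\overline{W_1'}$, and then to read off the behaviour there from quasi-reducedness.

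First I would record the routine fact that every subword of a reduced word is reduced. If $W_0W_1'$ is reduced, option (a) holds with $I=\overline{W_0}$ and $I'=\overline{W_1'}$, so assume $W_0W_1'$ is not reduced. By the definition of reducedness this means that either $W_0W_1'$ has two neighbouring letters in a common $G_n$, or it has a subword $V_1$ with $V_1\sim E$ and $V_1\not\equiv E$. In the first case, reducedness of $W_0$ and of $W_1'$ forbids both neighbours from lying in $\overline{W_0}$, and forbids both from lying in $\overline{W_1'}$; since $\overline{W_0}$ is an initial interval of $\overline{W_0}\,\overline{W_1'}$, the smaller neighbour lies in $\overline{W_0}$ and the larger in $\overline{W_1'}$, and being neighbours they must be $\max\overline{W_0}$ and $\min\overline{W_1'}$. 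In the second case, quasi-reducedness of $W_0W_1'$ gives $\im(V_1)\subseteq G_n$ for some $n$; reducedness of $W_0$ and $W_1'$ again prevents $\overline{V_1}$ from being contained in $\overline{W_0}$ or in $\overline{W_1'}$, so $\overline{V_1}$ meets both, and hence $\overline{V_1}\cap\overline{W_0}$ is a nonempty terminal interval of $\overline{W_0}$ on which $W_0$ takes only values in $G_n$. Since $\{i\in\overline{W_0}:W_0(i)\in G_n\}$ is finite and, by reducedness, contains no two neighbours in $\overline{W_0}$, this interval reduces to the single point $\max\overline{W_0}$; symmetrically $\overline{V_1}\cap\overline{W_1'}=\{\min\overline{W_1'}\}$. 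Either way $\max\overline{W_0}$ and $\min\overline{W_1'}$ exist and carry letters in a common $G_n$, and setting $I:=\overline{W_0}\setminus\{\max\overline{W_0}\}$ and $I':=\overline{W_1'}\setminus\{\min\overline{W_1'}\}$ gives an initial interval and a terminal interval realising option (b).

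For the final assertion, write $g:=W_0(\max\overline{W_0})$ and $h:=W_1'(\min\overline{W_1'})$, which lie in a common $G_n$, and write $W_0\equiv Pg$ and $W_1'\equiv hQ$ with $P$ and $Q$ reduced. I would first verify $gh\neq 1$ in $G_n$: otherwise the two-letter subword of $W_0W_1'$ at positions $\max\overline{W_0}$ and $\min\overline{W_1'}$ is $\sim E$ but $\not\equiv E$, and applying quasi-reducedness to the decomposition of $W_0W_1'$ whose middle factor is that subword and whose outer factors are $P$ and $Q$ would force some $i\in\{\max\overline{P},\min\overline{Q}\}$ to carry a letter of $G_n\setminus\{1\}$; but, when these positions exist, reducedness of $W_0$ and $W_1'$ places their letters outside $G_n$ (and if neither position exists there is no admissible $i$ at all), a contradiction. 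Hence $gh\in G_n\setminus\{1\}$, and the word $U\equiv P\,(gh)\,Q$ — meaning $P$, then the single letter $gh$, then $Q$ — is well formed. One checks that $p_N(U)=p_N(W_0W_1')$ as elements of $*_{k=0}^{N}G_k$ for every $N$, treating $n\le N$ and $n>N$ separately, so $U\sim W_0W_1'$. Finally $U$ is reduced: the letter $gh\in G_n$ is flanked, when its neighbours in $U$ exist, by the last letter of $P$ and the first letter of $Q$, both outside $G_n$ by reducedness, so no two neighbours of $U$ share a $G_n$; and if $B$ is a subword of $U$ with $B\sim E$, then either $\overline{B}$ avoids the position of $gh$ — so $B$ is a subword of the reduced word $P$ or of $Q$, forcing $B\equiv E$ — or $\overline{B}$ contains it, in which case $B$ is itself a reduced word possessing a letter and hence $B\not\sim E$ by the uniqueness in Lemma \ref{Eda}. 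Thus $U\equiv\Red(W_0W_1')$, which is precisely the word obtained by performing the indicated multiplication.

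The step I expect to demand the most care is the bookkeeping around degenerate order types: $\overline{W_0}$ may have no maximum, $\overline{W_1'}$ no minimum, and $P$ or $Q$ may be empty, so the terminal/initial-interval arguments must be carried out without assuming endpoints exist. The point that makes everything close is that the defining finiteness condition on words forces any monochromatic terminal interval of $\overline{W_0}$ to be finite, which together with reducedness collapses it to a singleton; this is what tames the a priori arbitrary junction and reduces the correction to the single multiplication described.
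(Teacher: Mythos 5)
Your strategy is the natural one: use reducedness of $W_0$ and $W_1'$ together with quasi-reducedness of $W_0W_1'$ to confine any failure of reducedness to the single junction, then exhibit the reduced representative directly. The paper offers this statement as an observation with no accompanying argument, so there is nothing to compare against; the first half of your writeup (locating the failure at $\max\overline{W_0}$ and $\min\overline{W_1'}$, and the verification that $gh\neq 1$ using quasi-reducedness) is correct and handles the degenerate order-type issues honestly.

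There is, however, a genuine circularity in the final step. To show $U\equiv P(gh)Q$ is reduced you consider a subword $B$ of $U$ whose domain contains the position of $gh$, and assert that ``$B$ is itself a reduced word possessing a letter.'' That claim is a subinstance of the very statement you are proving --- $B$ is a subword of $U$, and reducedness of subwords of $U$ is what is at stake --- so it cannot be assumed. The gap closes quickly if you instead route the argument back through the hypothesis on $W_0W_1'$. Write $B\equiv P'(gh)Q'$ with $P'$ a terminal subword of $P$ and $Q'$ an initial subword of $Q$, and pass to the corresponding middle factor $B'\equiv P'ghQ'$ of $W_0W_1'\equiv PghQ$. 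If $B\sim E$ then $B'\sim E$, so quasi-reducedness of $W_0W_1'$ gives $\im(B')\subseteq G_n$ (the group containing $g\neq 1$). Then $P'$ is a terminal subword of $P$ all of whose letters lie in $G_n$: the word condition makes its domain finite, reducedness of $W_0\equiv Pg$ forbids two $G_n$-letters from being neighbours, and the one remaining possibility --- a single letter $W_0(\max\overline{P})$ --- lies outside $G_n$ by reducedness of $W_0$ since it neighbours $g$. Hence $P'\equiv E$, symmetrically $Q'\equiv E$, and $B\equiv(gh)$ with $gh\neq 1$, so $B\not\sim E$, a contradiction. With this patch your computation of the projections $p_N(U)=p_N(W_0W_1')$ and the uniqueness clause of Lemma~\ref{Eda} finish the proof.
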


Let $\Red(\{G_n\}_{n \in \omega})$ denote the set of reduced words associated with the sequence $\{G_n\}_{n \in \omega}$.  We see by the above that $\topprod_{n \in \omega} G_n$ is isomorphic to $\Red(\{G_n\}_{n \in \omega})$ via the function $\Red(\cdot)$, the binary operation in $\Red(\{G_n\}_{n \in \omega})$ is given by $W_0 \circledcirc W_1 = \Red(W_0W_1)$, and the computation of $\Red(W_0W_1)$ is straightforward.  Since the combinatorics of  reduced words are so clean, we generally work with $\Red(\{G_n\}_{n \in \omega})$ instead of the isomorphic group $\topprod_{n \in \omega} G_n$.  Under this isomorphism, the free product $*_{n \in \omega}G_n$ is identified with the freely reduced finite words.

\begin{definition}  Given a letter $g \in \bigcup_{n \in \omega} G_n \setminus \{1\}$ we let $d(g) = m$ where $g\in G_m \setminus \{1\}$ and more generally given a word $W \in \W(\{G_n\}_{n \in \omega})$ which is not constantly $1$ we let $d(W)=
\min\{d(W(i)): i \in \overline{W}, W(i) \neq 1\}$.  Notice that if $W \equiv g$ is of length $1$ with $g \in G_m \setminus \{1\}$ then $d(W) = d(g)$.
\end{definition}

\begin{definitions}  We say $W \in \W(\{G_n\}_{n \in \omega})$ is \emph{proper} if $W(i) \neq 1$ for all $i \in \overline{W}$.  For a proper word $W$, a finite ordered list $C = (i_0; \ldots; i_k)$ is a \emph{reduction component on $W$} if $i_0, \ldots, i_k \in \overline{W}$ with $i_0 < i_1 < \cdots < i_k$ and there exists $M_C \in \omega$ with $W(i_0), \ldots, W(i_k) \in G_{M_C}$, and the list $C$ has at least two elements.  We further abuse notation and let $d(C) = M_C$.  If $C = (i_0; \ldots; i_k)$ is a reduction component on word $W$ we let $\pi(W, C)$ be the word of length $1$ obtained by taking the product $W(i_0)\cdots W(i_k)$ in $G_{d(C)}$ in case this product is not identity, and if the product is identity we let $\pi(W, C)$ be the empty word $E$.  We also let $\set(C) = \{i_0, \ldots, i_k\}$ be the set of elements appearing in the list $C$.
\end{definitions}

\begin{definition}\label{redschdef}  Given a word $W \in \W(\{G_n\}_{n\in \omega})$ a collection $\mathcal{S}$ of reduction components is a \emph{reduction scheme on $W$} if

\begin{itemize}

\item for distinct reduction components $C_0, C_1 \in \mathcal{S}$ we have $\set(C_0) \cap \set(C_1) = \emptyset$; and

\item for $C = (i_0; \ldots; i_k) \in \mathcal{S}$, $0 \leq j < k$, and $i$ in the open interval $(i_j, i_{j+1}) \subseteq \overline{W}$ there exists a $C_0 \in \mathcal{S}$ with $i \in \set(C_0) \subseteq (i_j, i_{j+1})$ and $\pi(W, C_0) \equiv E$.
\end{itemize}

\end{definition}

\begin{proposition}\label{reductionscheme}  Let $W \in \W(\{G_n\}_{n \in \omega})$ be proper.

\begin{enumerate}

\item $W \sim E$ if and only if there is a reduction scheme $\mathcal{S}$ on $W$ such that $\bigcup_{C \in \mathcal{S}} \set(C) = \overline{W}$ and $\pi(W, C) \equiv E$ for all $C \in \mathcal{S}$.

\item $W$ is reduced if and only if the only reduction scheme on $W$ is the empty reduction scheme $\mathcal{S} = \emptyset$.

\end{enumerate}

\end{proposition}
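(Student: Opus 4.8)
The plan is to route both equivalences through a single statement about \emph{finite} words, which I shall call the Finite Lemma: if $V$ is a finite word with $V(i)\neq 1$ for all $i\in\overline{V}$ and $V\sim E$, then there is a reduction scheme $\mathcal{S}$ on $V$ with $\bigcup_{C\in\mathcal{S}}\set(C)=\overline{V}$ and $\pi(V,C)\equiv E$ for every $C\in\mathcal{S}$; call such a scheme \emph{full}. The backward direction of (1) is then quick: if $\mathcal{S}$ is a full reduction scheme on $W$ covering $\overline{W}$, then since $W$ takes no value $1$ we have $d(C)=d(W(i))$ for $C\in\mathcal{S}$ and $i\in\set(C)$, so $\mathcal{S}_N:=\{C\in\mathcal{S}:d(C)\leq N\}$ is a full reduction scheme on the finite word $p_N(W)$ covering $\overline{p_N(W)}$; and any finite word $V$ carrying a full reduction scheme $\mathcal{T}$ covering $\overline{V}$ equals $1$ in the free product of the groups it involves, by induction on $|\overline{V}|$: if $\mathcal{T}\neq\emptyset$ then, since $\overline{V}$ is finite, not every component can have a nonempty gap (else the second clause of Definition \ref{redschdef} would yield an infinite strictly descending chain of intervals $[\min\set(C),\max\set(C)]$), so some $C=(i_0;\ldots;i_k)\in\mathcal{T}$ has all of its gaps $(i_j,i_{j+1})$ empty; then $i_0,\ldots,i_k$ are consecutive in $\overline{V}$, lie in $G_{d(C)}$, and $\pi(V,C)\equiv E$, so multiplying them out deletes them and produces a shorter word equal to $V$ in the free product and still carrying a full scheme (namely $\mathcal{T}\setminus\{C\}$). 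Hence $p_N(W)=1$ in $*_{n\leq N}G_n$ for all $N$, that is, $W\sim E$.

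For the forward direction of (1) I first prove the Finite Lemma by induction on $|\overline{V}|$. The case $|\overline{V}|=0$ is trivial and the hypotheses exclude $|\overline{V}|=1$. Otherwise let $M$ be the largest level occurring in $V$ and write $*_{n\leq M}G_n=A*G_M$ with $A=*_{n<M}G_n$; then $V$ is represented in $A*G_M$ by an alternating product $h_0g_1h_1\cdots g_sh_s$, where $g_1,\ldots,g_s$ are the level-$M$ letters of $V$ in order (each $\neq 1$) and the $h_t$ are the intervening subwords, of level $<M$. Since this product is $1$ and each $g_t\neq 1$, the free-product normal form forces $s\geq 2$ and forces some interior $h_{t^*}$ (with $1\leq t^*\leq s-1$) to equal $1$ in $A$. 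The subword realizing $h_{t^*}$ is a strictly shorter word which is $\sim E$, so by induction it carries a full scheme $\mathcal{T}$; deleting its positions and merging the two now-adjacent letters $g_{t^*},g_{t^*+1}$ (deleting them too if $g_{t^*}g_{t^*+1}=1$ in $G_M$) yields a strictly shorter word, still $\sim E$, to which induction again applies, and splicing its full scheme with $\mathcal{T}$ and, when $g_{t^*}g_{t^*+1}=1$, with the two-element component on the positions of $g_{t^*},g_{t^*+1}$, produces a full scheme on $V$. Verifying the axioms of Definition \ref{redschdef} for the spliced family — disjointness and triviality of products are immediate, but one must check that every gap of every component is still covered, across the deletion and the merge — is the most delicate point and needs a short case analysis; I expect this to be the main obstacle. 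Granting the Finite Lemma, the general forward direction follows by compactness: for each $N$ the set $X_N$ of full reduction schemes on $p_N(W)$ covering $\overline{p_N(W)}$ is finite and, by the Finite Lemma applied to $p_N(W)\sim E$, nonempty, and $\mathcal{S}\mapsto\{C\in\mathcal{S}:d(C)\leq N\}$ maps $X_{N+1}$ into $X_N$, so a König's lemma argument gives $\mathcal{S}_N\in X_N$ with $\mathcal{S}_N\subseteq\mathcal{S}_{N+1}$, and $\bigcup_N\mathcal{S}_N$ is the desired full reduction scheme on $W$.

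Finally, part (2) is deduced from (1). For the backward implication: if $W$ is not reduced, then either two neighbouring $i_0,i_1\in\overline{W}$ have $W(i_0),W(i_1)$ in a common $G_n$, so $\{(i_0;i_1)\}$ is a nonempty reduction scheme (its only gap is empty); or $W\equiv W_0W_1W_2$ with $E\not\equiv W_1\sim E$, in which case the forward direction of (1) provides a full — in particular nonempty — reduction scheme on $W_1$, which is a nonempty reduction scheme on $W$ since $\overline{W_1}$ is convex in $\overline{W}$. For the forward implication, let $W$ be reduced and suppose $C=(i_0;\ldots;i_k)\in\mathcal{S}$ for some reduction scheme $\mathcal{S}$. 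For each $j$ the family $\{C_0\in\mathcal{S}:\set(C_0)\subseteq(i_j,i_{j+1}),\ \pi(W,C_0)\equiv E\}$ is, by Definition \ref{redschdef}, a full reduction scheme covering the domain of $W\upharpoonright(i_j,i_{j+1})$, so the backward direction of (1) gives $W\upharpoonright(i_j,i_{j+1})\sim E$; hence $W\upharpoonright[i_0,i_k]\sim\pi(W,C)$, a word of length $\leq 1$. But $W\upharpoonright[i_0,i_k]$ is a subword of the reduced word $W$, hence reduced, so by the uniqueness clause of Lemma \ref{Eda} it has length $\leq 1$, contradicting $k\geq 1$. Therefore $\mathcal{S}=\emptyset$.
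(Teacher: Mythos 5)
Your route is essentially the paper's: for (1) ($\Leftarrow$) you restrict the covering scheme to levels $\leq N$ and observe that the result witnesses $p_N(W)=1$; for (1) ($\Rightarrow$) you extract a compatible sequence of level-$N$ schemes by a compactness argument, and your K\"onig-tree argument is interchangeable with the paper's Cantor-style diagonal through nested infinite subsets $Y_N \subseteq \omega$; and part (2) is derived from part (1). The one genuine addition is your Finite Lemma: where the paper simply appeals to ``combinatorics in free products'' for the existence of a covering reduction scheme on $p_N(W)$, you prove this, together with its converse (a finite word carrying a full covering scheme is trivial), by induction on word length. That is a worthwhile thing to spell out, and your worry about the splice step is reasonable but it does close: the positions removed in passing from $V$ to $V'$ form a contiguous block of $\overline{V}$, so any gap of a component of the inductive scheme $\mathcal{T}'$ that meets the block contains all of it and is therefore covered by $\mathcal{T}$ (together with the two-element component on the positions of $g_{t^*}$ and $g_{t^*+1}$ when $g_{t^*}g_{t^*+1}=1$); when $g:=g_{t^*}g_{t^*+1}\neq 1$, one unmerges the unique $C'\in\mathcal{T}'$ through the merged position by inserting the position of $g_{t^*+1}$ immediately after that of $g_{t^*}$, and the only new gap so created is precisely the domain of $h_{t^*}$, which is covered by $\mathcal{T}$. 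One minor observation on (2) ($\Rightarrow$): your argument via $W\upharpoonright[i_0,i_k]\sim\pi(W,C)$ and uniqueness of reduced forms is correct but heavier than needed; the paper looks only at the first gap $(i_0,i_1)$ of $C$, which is either empty (so $W(i_0)$ and $W(i_1)$ are neighbouring letters of a common $G_n$) or, by part (1) applied to $\{C_0\in\mathcal{S}:\set(C_0)\subseteq(i_0,i_1)\}$, the domain of a nonempty subword $\sim E$, and either case already contradicts reducedness.
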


\begin{proof} (1) ($\Rightarrow$)  Suppose that $W \in \W(\{G_n\}_{n \in \omega})$ is such that $W(i) \neq 1$ for all $i \in \overline{W}$, and that $W \sim E$.  We will define a reduction scheme on $W$.  For each $n \in \omega$ let $X_n = \{i \in \overline{W}: d(W(i)) = n\}$.  As $W(i) \neq 1$ for all $i \in \overline{W}$ we have $\overline{W} = \bigsqcup_{n \in \omega} X_n$ and as $W$ is a word we know that each $X_n$ is finite.  We have $p_N(W) = W \upharpoonright \bigsqcup_{n = 0}^N X_n$ for each $N \in \omega$, and as $W \sim E$ we know $p_N(W)$ is equal in $*_{n = 0}^N G_n$ to $E$.  From combinatorics in free products, for every $N \in \omega$ we have a reduction scheme $\mathcal{S}_N$ such that $\pi(W, C) \equiv E$ for all $C \in \mathcal{S}_N$ and $\bigcup_{C \in \mathcal{S_N}}\set(C) = \overline{p_N(U)} = \bigsqcup_{n = 0}^N X_n$.  Let $Y_{-1} = \omega$.  If $X_0 = \emptyset$ then we let $Y_0 = Y_{-1}$.  Else we know, since $X_0$ is finite, that there is an infinite $Y_0 \subseteq Y_{-1}$ such that for all $m, m' \in Y_0$ we have $\{C \in \mathcal{S}_m: d(C) = 0\} = \{C \in \mathcal{S}_{m'}: d(C) = 0\}$.  Supposing that we have defined infinite $Y_N \subseteq \omega$, if $X_{N+1} = \emptyset$ then let $Y_{N+1} = Y_N$, else we select infinite $Y_{N+1} \subseteq Y_N$ such that $m, m' \in Y_{N+1}$ we have $\{C \in \mathcal{S}_m: d(C) = N+1\} = \{C \in \mathcal{S}_{m'}: d(C) = N+1\}$.  Let

\begin{center}

$\mathcal{S} = \{C: (\exists m, N \in \omega)m \in Y_N \wedge d(C) = N \wedge C \in \mathcal{S}_{m}\}$.

\end{center}

\noindent Letting $C_0, C_1 \in \mathcal{S}$ be distinct, we let $N = \max\{d(C_0), d(C_1)\}$.  Pick $m \in Y_N$ and we have $C_0, C_1 \in \mathcal{S}_m$, and so $\set(C_0) \cap \set(C_1) = \emptyset$.  Next, we let $C = (i_0; \ldots; i_k) \in \mathcal{S}$ be given, $0 \leq j < k$ and $i$ in the interval $(i_j, i_{j+1}) \subseteq \overline{W}$.  Let $N = \max\{d(U(i)), d(C)\}$ and select $m \in Y_N$.  There is a unique $C_0 \in \mathcal{S}_m$ such that $i \in \set(C_0) \subseteq (i_j, i_{j+1})$, and $C_0 \in \mathcal{S}$.  Also, $\pi(W, C_0) \equiv E$, as indeed $\pi(W, C_1) \equiv E$ for every $C_1 \in \mathcal{S}$.  Thus $\mathcal{S}$ is a reduction scheme on $W$ it is clear that $\bigcup_{C \in \mathcal{S}} \set(C) = \overline{W}$ and $\pi(W, C) = E$ for all $C \in \mathcal{S}$.

(1)  ($\Leftarrow$)  Suppose that $W \in \W(\{G_n\}_{n \in \omega})$ is such that $W(i) \neq 1$ for all $i \in \overline{W}$.  Suppose that there is a reduction scheme $\mathcal{S}$ on $W$ such that $\bigcup_{C \in \mathcal{S}} \set(C) = \overline{W}$ and $\pi(W, C) \equiv E$ for all $C \in \mathcal{S}$.  For a given $N \in \omega$ we let $\mathcal{S}_N = \{C \in \mathcal{S}: d(C) \leq N\}$, and notice that $\mathcal{S}_N$ witnesses that $p_N(W)$ is equal to identity in $*_{n = 0}^N G_n$, and therefore $W \sim E$.

(2)  ($\Rightarrow$)  Suppose that $W \in \W(\{G_n\}_{n \in \omega})$ has $W(i) \neq 1$ for all $i \in \overline{W}$.  Suppose that there exists a nonempty reduction scheme $\mathcal{S}$ on $W$.  Let $C = (i_0; \ldots; i_k) \in \mathcal{S}$.  If $i_0$ and $i_1$ are neighboring then we know that $W$ was not reduced, since $W(i_0)$ and $W(i_1)$ are in the group $G_{d(C)}$, and if $i_0$ and $i_1$ are not neighboring then the reduction scheme $\mathcal{S}' = \{C \in \mathcal{S}: \set(C) \cap (i_0, i_1) \neq \emptyset\}$ witnesses that the nonempty subword $W \upharpoonright (i_0, i_1)$ is $\sim E$, by part (1).  Thus in either case, $W$ is not reduced.

(2) ($\Leftarrow$)  Suppose that $W \in \W(\{G_n\}_{n \in \omega})$ has $W(i) \neq 1$ for all $i \in \overline{W}$.  Suppose that the only reduction scheme on $W$ is the empty scheme.  If there are neighboring $i_0, i_1 \in \overline{W}$ such that $W(i_0)$ and $W(i_1)$ are in the same $G_n$, then $\mathcal{S} = \{C\}$ where $C = (i_0; i_1)$ is a nonempty reduction scheme, contradiction.  If there is a nonempty interval $I \subseteq \overline{W}$ such that $W \upharpoonright I \sim E$ then by part (1) we have a reduction scheme $\mathcal{S}$ on $W \upharpoonright I$ such that $\bigcup_{C \in \mathcal{S}} \set(C) = I$, so in particular $\mathcal{S}$ is a nonempty reduction scheme on $W$ itself, contradiction.

\end{proof}

\begin{definition}  For $W \in \Red(\{G_n\}_{n \in \omega})$ we let $\Sub(W)$ be the set of subwords of $W$, and more generally for a collection $\{W_x\}_{x \in X} \subseteq \Red(\{G_n\}_{n \in \omega})$ we let $\Sub(\{W_x\}_{x \in X}) = \bigcup_{x \in X}\Sub(W_x)$.  For $W \in \Red(\{G_n\}_{n \in \omega})$ we let $\Let(W)$ be the set of letters used in $W$ (this is the image of the function $W$), and more generally for a collection $\{W_x\}_{x \in X} \subseteq \Red(\{G_n\}_{n \in \omega})$ we let $\Let(\{W_x\}_{x \in X}) = \bigcup_{x \in X} \Let(W_x)$.
\end{definition}

\begin{lemma}\label{nicefine}  Suppose $\{W_x\}_{x \in X} \subseteq \Red(\{G_n\}_{n \in \omega})$.  The following are equivalent for a word $W \in \Red(\{G_n\}_{n \in \omega})$:

\begin{enumerate}

\item $W \in \langle \Sub(\{W_x\}_{x \in X}) \rangle \leq \Red(\{G_n\}_{n \in \omega})$;

\item $W$ can be expressed (not necessarily uniquely) as a finite concatenation

\begin{center}
$W \equiv W_0W_1 \cdots W_k$
\end{center}

\noindent and for each $0 \leq j \leq k$ at least one of the following holds:

\begin{itemize}
\item $W_j \in \Sub(\{W_x\}_{x \in X} \cup \{W_x^{-1}\}_{x \in X})$ is nonempty;

\item $|\overline{W_j}| = 1$ with $\Let(W_j) = \{g\}$ such that $g \in G_{d(g)}\setminus \{1\}$ and $g$ is a product of elements in $G_{d(g)} \cap \Let(\{W_x\}_{x \in X} \cup \{W_x^{-1}\}_{x \in X})$.

\end{itemize}

\end{enumerate}

\end{lemma}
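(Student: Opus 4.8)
I would prove the two implications separately; $(2)\Rightarrow(1)$ is routine, while $(1)\Rightarrow(2)$ carries the real content. Throughout, write $W_x^{\pm1}$ for either $W_x$ or $W_x^{-1}$, put $S=\Sub(\{W_x\}_{x\in X})$, and note that subwords of reduced words are reduced, that each $W_x^{-1}$ is reduced, and that the inverse of a subword of $W_x$ is a subword of $W_x^{-1}$; hence $\Sub(\{W_x\}_{x\in X}\cup\{W_x^{-1}\}_{x\in X})=S\cup S^{-1}$ and every element of $\langle S\rangle$ is a $\circledcirc$-product of members of $S\cup S^{-1}$.

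For $(2)\Rightarrow(1)$: it suffices to check that each factor $W_j$ of the given concatenation lies in $\langle S\rangle$. If $W_j$ is a nonempty member of $S\cup S^{-1}$ then it is a subword of some $W_x$, hence in $S$, or a subword of some $W_x^{-1}$, hence the inverse of a member of $S$. If instead $W_j$ has length one with letter $g=g_1\cdots g_r$ a product in $G_{d(g)}$ of letters from $\Let(\{W_x\}_{x\in X}\cup\{W_x^{-1}\}_{x\in X})$, then $W_j$ is the $\circledcirc$-product of the length-one words carrying $g_1,\dots,g_r$ (which merge, all lying in $G_{d(g)}$), each of which is a length-one subword of some $W_x^{\pm1}$. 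Since $W$ is reduced and $W\equiv W_0W_1\cdots W_k$, this concatenation is already reduced, so $W=W_0\circledcirc W_1\circledcirc\cdots\circledcirc W_k$ in $\Red(\{G_n\}_{n\in\omega})$, whence $W\in\langle S\rangle$.

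For $(1)\Rightarrow(2)$: by the remark above we may write $W=U_1\circledcirc\cdots\circledcirc U_m$ with each $U_i\in S\cup S^{-1}$, and we may assume every $U_i$ is nonempty (the case $W\equiv E$ being taken care of by the empty concatenation). I would induct on $m$; the case $m\le1$ is immediate. For the inductive step set $V:=U_1\circledcirc\cdots\circledcirc U_m$, so $W=V\circledcirc U_{m+1}$ and $V$ satisfies~(2) by hypothesis. Apply Lemma~\ref{Eda} to the reduced words $V$ and $U_{m+1}$: after cancelling the matching terminal and initial segments, Observation~\ref{howtomultuply} gives that $W$ equals either $V'U'$ (when this is reduced) or $\widetilde{V}\,[ab]\,\widetilde{U}$, where $V'$ is an initial subword of $V$, $U'$ a terminal subword of $U_{m+1}$, $\widetilde{V}$ is $V'$ with its last letter $a$ removed, $\widetilde{U}$ is $U'$ with its first letter $b$ removed, and $[ab]$ is the length-one word whose unique letter is the product of $a$ and $b$ taken in their common group $G_n$ --- a product that is not the identity, since the quasi-reducedness clause of Lemma~\ref{Eda} rules out $ab=1$. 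Now $U'$ and $\widetilde{U}$ are terminal subwords of $U_{m+1}$, hence subwords of some $W_x^{\pm1}$, so are legal factors; and $V'$ and $\widetilde{V}$ are initial subwords of $V$, so applying the decomposition of $V$ furnished by~(2) --- and using that an initial subword of a subword of $W_x^{\pm1}$ is again such a subword, while an initial subword of a length-one word is empty or that word --- one obtains for each of $V'$ and $\widetilde{V}$ a decomposition of the form demanded by~(2). Finally, $a$ is a letter of $V$, so by~(2) for $V$ it is a product of elements of $G_n\cap\Let(\{W_x\}_{x\in X}\cup\{W_x^{-1}\}_{x\in X})$ (it either lies in that set, coming from a subword-of-$W_x^{\pm1}$ factor, or is already exhibited as such a product, being the letter of a length-one factor), while $b$ is a letter of $U_{m+1}$, a subword of some $W_x^{\pm1}$, lying in the same $G_n$; hence $ab$ is again such a product and $[ab]$ is a legal factor. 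Concatenating the pieces produces the desired decomposition~(2) of $W$.

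The main obstacle is concentrated in the inductive step of $(1)\Rightarrow(2)$, and it is bookkeeping rather than a new idea: one must confirm that truncating the inductively-supplied decomposition of $V$ to an arbitrary initial subword preserves the legality of every factor, and that the letter created by the single merge in Observation~\ref{howtomultuply} --- a product of a letter of $V$ with a letter of some $W_x^{\pm1}$, both lying in one group $G_n$ --- is once more a product of original letters from a single $G_n$, and is nonidentity, so that it qualifies as a legal length-one factor. The explicit, tightly controlled description of reduced multiplication in Lemma~\ref{Eda} and Observation~\ref{howtomultuply} is exactly what makes this go through.
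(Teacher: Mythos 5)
Your proof is correct and follows essentially the same route as the paper's: the paper disposes of $(1)\Rightarrow(2)$ in one sentence by asserting that, via Lemma~\ref{Eda} and Observation~\ref{howtomultuply}, the set of words satisfying~(2) forms a subgroup containing $\Sub(\{W_x\}_{x\in X})$, and your induction on the number of factors in $W=U_1\circledcirc\cdots\circledcirc U_m$ is precisely the closure-under-$\circledcirc$ argument needed to justify that assertion, worked out in detail. The bookkeeping you carry out --- truncation of a legal decomposition to an initial subword, and the nonidentity of the single merged letter via the quasi-reducedness clause --- is exactly the content the paper leaves implicit.
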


\begin{proof}  Certainly if $W$ may be expressed as a finitary concatenation as in (2) then $W \in \langle \Sub(\{W_x\}_{x \in X}) \rangle$ because each $W_j \in \Sub(\{W_x\}_{x \in X}) \rangle$, so that condition (2) implies condition (1).   But by Lemma \ref{Eda} and Observation \ref{howtomultuply} it is clear that the set of all words satisfying condition (2) forms a subgroup of $\Red(\{G_n\}_{n \in \omega})$ which includes the set $\Sub(\{W_x\}_{x \in X})$.  Thus condition (1) implies condition (2).
\end{proof}

\begin{definition}  Given a collection $\{W_x\}_{x \in X} \subseteq \Red(\{G_n\}_{n \in \omega})$ we let $$\Fine(\{W_x\}_{x \in X}) = \langle \Sub(\{W_x\}_{x \in X}) \rangle \leq \Red(\{G_n\}_{n \in \omega})$$ (compare \cite[page 600]{E2}).  It is clear by condition (2) of Lemma \ref{nicefine} that $$\Sub(\Fine(\{W_x\}_{x \in X})) = \Fine(\{W_x\}_{x \in X}).$$
\end{definition}

\begin{definitions}   Define $\mathcal{A}(\{G_n\}_{n \in \omega})$ to be the quotient group $$\Red(\{G_n\}_{n \in \omega})/\langle\langle *_{n \in \omega}G_n\rangle\rangle$$ Let $\beth: \topprod_{n \in \omega} G_n \rightarrow \mathcal{A}(\{G_n\}_{n \in \omega})$ denote the quotient homomorphism, and for a word $W \in \topprod_{n \in \omega} G_n$ we use $[[W]]$ to denote the equivalence class of $W$ in the quotient $\mathcal{A}(\{G_n\}_{n \in \omega})$, i.e. $\beth(W) = [[W]]$.
\end{definitions}

It turns out that one can make slight modifications to the sequence $\{G_n\}_{n \in \omega}$ without changing the isomorphism type of $\mathcal{A}(\{G_n)\}_{n\in \omega})$ (see \cite[Lemma 17]{CHM}):

\begin{lemma}\label{nicefactsaboutarch}  For a sequence $\{G_n\}_{n \in \omega}$ of groups the following assertions hold.

\begin{enumerate}

\item  If $f: \omega \rightarrow \omega$ is a bijection then $\mathcal{A}(\{G_n\}_{n\in \omega}) \simeq \mathcal{A}(\{G_{f(n)}\}_{n\in \omega})$.

\item  For each $k \in \omega$ we have an isomorphism $\mathcal{A}(\{G_n\}_{n \in \omega}) \simeq \mathcal{A}(\{G_{n + k}\}_{n\in \omega})$.

\item We have an isomorphism $\mathcal{A}(\{G_n\}_{n \in \omega}) \simeq \mathcal{A}(\{G_{2n} * G_{2n + 1}\}_{n \in \omega})$.

\end{enumerate}

\end{lemma}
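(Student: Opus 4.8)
The plan is to establish the three isomorphisms of Lemma \ref{nicefactsaboutarch} by exhibiting, in each case, an isomorphism between the two ambient topologist's products that carries the free product onto the free product, and hence descends to the quotient.

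\medskip

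\textbf{Approach to (1) and (2).}  For part (1), a bijection $f : \omega \to \omega$ induces a bijection of letter sets $\bigcup_{n} G_n \to \bigcup_n G_{f(n)}$ by sending $g \in G_n$ to the corresponding element of $G_{f(n)}$.  This extends letter-by-letter to a bijection on words $\W(\{G_n\}_{n\in\omega}) \to \W(\{G_{f(n)}\}_{n\in\omega})$: given a word $W : \overline{W} \to \bigcup_n G_n$, compose with the letter bijection on the right to get a new function with the same totally ordered domain; the finite-support condition is preserved since $f$ is a bijection.  First I would check this map is compatible with $\sim$: this follows because $p_N$ is computed from $\bigcup_{n=0}^N G_n$, and after relabelling the relevant initial segment is $\{n : f(n) \le N\}$, which is again finite and cofinal data, so two words are $\sim$-equivalent before relabelling iff their images are.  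Next, concatenation and inverse are visibly respected, so we get an isomorphism $\topprod_{n\in\omega} G_n \to \topprod_{n\in\omega} G_{f(n)}$.  Since a word is finite iff its relabelled image is finite, the subgroup $*_{n\in\omega} G_n$ maps onto $*_{n\in\omega} G_{f(n)}$, hence so does its normal closure, and the isomorphism descends to $\mathcal{A}(\{G_n\}_{n\in\omega}) \simeq \mathcal{A}(\{G_{f(n)}\}_{n\in\omega})$.  Part (2) is the special case where $f$ is a shift on the relevant tail — more precisely, note $\{G_{n+k}\}_{n\in\omega}$ is, up to reindexing by a bijection of $\omega$, the same sequence as $\{G_n\}_{n\ge k}$; one should observe that deleting the finitely many groups $G_0,\ldots,G_{k-1}$ from the sequence does not change $\mathcal{A}(\cdot)$, because any word using only finitely many of these letters, together with the subgroup they generate, is absorbed into the free product and killed in the quotient.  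Concretely, I would argue $\topprod_{n\in\omega} G_n \cong \bigl(*_{n=0}^{k-1}G_n\bigr) * \topprod_{n\ge k} G_n$ in a suitable sense at the level of the quotient: the image of the first factor lies in the normal closure of the free product, so $\mathcal{A}(\{G_n\}_{n\in\omega})$ is generated by the image of $\topprod_{n\ge k}G_n$, and the relations are exactly those of $\mathcal{A}(\{G_{n+k}\}_{n\in\omega})$.

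\medskip

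\textbf{Approach to (3).}  Here the letter set of the new sequence is $\bigcup_{n}(G_{2n} * G_{2n+1})$, and there is a natural inclusion of letter sets $\bigcup_n G_n \hookrightarrow \bigcup_n (G_{2n}*G_{2n+1})$ sending $G_{2n}$ and $G_{2n+1}$ into $G_{2n}*G_{2n+1}$.  This induces a map on words: a word $W$ over $\{G_n\}$ becomes a word over $\{G_{2n}*G_{2n+1}\}$ by keeping the same domain and reinterpreting each letter.  I would check finite support is preserved (a letter of $W$ in $G_{2n}$ or $G_{2n+1}$ becomes a letter in the $n$-th new group, and each of these occurs finitely often), and that $\sim$ is respected — here the key point is that the $N$-th truncation $p_N$ for the new sequence recovers exactly $p_{2N+1}$ for the old, via the standard fact that $*_{m=0}^{N}(G_{2m}*G_{2m+1}) = *_{n=0}^{2N+1} G_n$.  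Going the other direction, every word over $\{G_{2n}*G_{2n+1}\}$ can be rewritten as a word over $\{G_n\}$: replace each letter $g \in G_{2n}*G_{2n+1}$ by a reduced finite word in the letters of $G_{2n}$ and $G_{2n+1}$ representing it, inserting finitely many letters in place of one; this preserves the finite-support condition and is well-defined up to $\sim$.  These two constructions are mutually inverse on $\sim$-classes, giving $\topprod_{n\in\omega}G_n \simeq \topprod_{n\in\omega}(G_{2n}*G_{2n+1})$, and since finite words correspond to finite words under both directions, the free products correspond, and the isomorphism descends to $\mathcal{A}$.

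\medskip

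\textbf{Main obstacle.}  The only real subtlety is verifying that the various letter-relabelling or letter-expansion maps are well-defined on $\sim$-equivalence classes, i.e. that they commute appropriately with all the projections $p_N$.  For (1) and (2) this is essentially bookkeeping about cofinal subsets of $\omega$.  For (3) the care needed is that expanding a single letter of $G_{2n}*G_{2n+1}$ into a finite reduced word may interact with neighbouring letters of the ambient word when one reduces; but since only finitely many letters are involved below any fixed level $N$, and finite-word combinatorics in free products are standard, this causes no genuine difficulty — one checks that the $p_N$ images agree in the respective free products $*_{n=0}^{2N+1}G_n = *_{m=0}^{N}(G_{2m}*G_{2m+1})$, after which $\sim$-compatibility is automatic.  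I expect the entire proof to consist of carefully spelling out these three relabelling correspondences and the observation that each carries finite words to finite words, so that passing to the quotient by the normal closure of the free product is legitimate.
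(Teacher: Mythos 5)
The paper does not prove this lemma itself; it cites \cite[Lemma 17]{CHM} and leaves the verification to that reference, so there is no paper proof to compare against. Your self-contained argument is correct: for (1) you note that the letter set and the finite-support condition are literally unchanged by a bijection of $\omega$, and that the equivalence $\sim$ is insensitive to reindexing because both families $\{0,\ldots,N\}$ and $\{f(0),\ldots,f(N)\}$ eventually exhaust $\omega$; for (2) you use the (standard, and easily iterated) decomposition $\topprod_{n\in\omega}G_n \cong \bigl(*_{n<k}G_n\bigr) * \topprod_{n\ge k}G_n$, observe the image of $*_{n\in\omega}G_n$ is $\bigl(*_{n<k}G_n\bigr)*\bigl(*_{n\ge k}G_n\bigr)$, and then apply the elementary fact that $(A*B)/\langle\langle A, C\rangle\rangle \cong B/\langle\langle C\rangle\rangle$ for $C\le B$; for (3) you build mutually inverse maps between $\topprod_{n}G_n$ and $\topprod_{n}(G_{2n}*G_{2n+1})$ using the identification $*_{m\le N}(G_{2m}*G_{2m+1}) = *_{n\le 2N+1}G_n$ and checking $\sim$-compatibility on truncations, then note finite words go to finite words. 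This is the natural direct argument, and it is very likely the same in substance as what \cite{CHM} does; the one place that warrants a touch more care than your write-up gives it is in (3), where the backward map (expanding each $G_{2n}*G_{2n+1}$-letter into a reduced finite word) should be checked to send $\sim$-classes to $\sim$-classes, which follows because for any $M$ one can choose $N$ with $2N+1\ge M$ and observe that $p_{2N+1}$ of the expanded word is exactly the expansion of $p_N$ of the original.
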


\begin{obs}\label{finitedeletions}  For $W \in \Red(\{G_n\}_{n \in \omega})$ expressed as a finite concatenation $W \equiv W_0W_1 \cdots W_k$ and $J \subseteq \{0, \ldots, k\}$ such that $\overline{W_j}$ is finite for each $j \in J$, we may write $[[W]] = \prod_{0 \leq j \leq k, j \notin J}[[W_i]]$, since $\overline{W_j}$ finite implies that $W_j \in *_{n \in \omega}G_n$ and more particularly $[[W_j]]=[[E]]$.
\end{obs}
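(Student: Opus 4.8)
The plan is to unwind the definitions, leaning on the fact that $\beth$ is a group homomorphism. First I would note that a word with finite domain is a finite word, so under the identification of $\topprod_{n \in \omega} G_n$ with $\Red(\{G_n\}_{n \in \omega})$ each $W_j$ with $j \in J$ lies in the subgroup $*_{n \in \omega} G_n$ of freely reduced finite words. (Since $W$ is reduced and $W \equiv W_0W_1 \cdots W_k$, each $W_j$ is in fact a subword of a reduced word, hence reduced; but only finiteness of $\overline{W_j}$ is used.) A subgroup is always contained in its own normal closure, so $*_{n \in \omega} G_n \subseteq \langle\langle *_{n \in \omega} G_n \rangle\rangle$, whence $\beth(W_j) = [[W_j]] = [[E]]$ for every $j \in J$.

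Next, since $W$ is reduced and $\equiv$-equal to the concatenation $W_0W_1 \cdots W_k$, reducing that concatenation returns $W$ itself, so in $\Red(\{G_n\}_{n \in \omega})$ we have $W = W_0 \circledcirc W_1 \circledcirc \cdots \circledcirc W_k$. Applying the homomorphism $\beth$ gives $[[W]] = [[W_0]] [[W_1]] \cdots [[W_k]]$, and discarding the factors indexed by $J$ — each equal to $[[E]]$ — yields $[[W]] = \prod_{0 \leq j \leq k,\ j \notin J}[[W_j]]$, as claimed.

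There is no genuine obstacle; the only point that merits a moment's care is the passage from the $\equiv$-equality $W \equiv W_0W_1 \cdots W_k$ to the group-theoretic equality $W = W_0 \circledcirc \cdots \circledcirc W_k$, which is what licenses applying $\beth$ factor-by-factor, and this is immediate from the description of the operation on $\Red(\{G_n\}_{n \in \omega})$ via $W_0 \circledcirc W_1 = \Red(W_0W_1)$ together with Lemma \ref{Eda}.
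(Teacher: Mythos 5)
Your proof is correct and takes essentially the same route as the paper's in-line justification: each $W_j$ with $j\in J$ has finite domain, hence lies in $*_{n\in\omega}G_n\subseteq\langle\langle *_{n\in\omega}G_n\rangle\rangle$, so $[[W_j]]=[[E]]$, and applying the homomorphism $\beth$ to the decomposition $W=W_0\circledcirc\cdots\circledcirc W_k$ yields the claimed product. Your explicit remark on passing from the $\equiv$-equality to the group-theoretic equality via $\Red(W_0\cdots W_k)=W$ is a sensible point of care, though the paper treats it as immediate.
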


\end{section}

\begin{section}{Coi collections}\label{Howtoisomorphize}

We recall some notions introduced in \cite{Cors}.  The straightforward proofs of the lemmas are given in that paper.

\begin{definitions}  Recall that if $\Lambda$ is a totally ordered set we say $I \subseteq \Lambda$ is an \emph{interval} in $\Lambda$ if it is a convex subset (that is- if $\lambda_0, \lambda_1 \in \Lambda$ and $\lambda_0 < \lambda_2 < \lambda_1$ then $\lambda_2 \in I$).  In particular an interval in $\Lambda$ can be unbounded.  We'll use standard conventions for interval notation, so for example $[\lambda_0, \lambda_1] = \{\lambda_2 \in \Lambda \mid \lambda_0 \leq \lambda_2 \leq \lambda_1\}$, $(\lambda_0, \lambda_1] = \{\lambda_2 \in \Lambda \mid \lambda_0 < \lambda_2 \leq \lambda_1\}$, and $[\lambda_0, \infty) = \{\lambda_1 \in \Lambda \mid \lambda_0 \leq \lambda_1\}$.  If $\Lambda$ is a totally ordered set we'll say that $\Lambda_0 \subseteq \Lambda$ is \emph{close in $\Lambda$}, writing $\Close(\Lambda_0, \Lambda)$, provided every infinite interval in $\Lambda$ has nonempty intersection with $\Lambda_0$.  
\end{definitions}

\begin{lemma}\label{basiccloseproperties}  The following hold:

\begin{enumerate}[(i)]

\item  If $\Close(\Lambda_0, \Lambda)$ then for any infinite interval $I \subseteq \Lambda$ the set $I \cap \Lambda_0$ is infinite.

\item  If $\Lambda_2 \subseteq \Lambda_1 \subseteq \Lambda_0$ with $\Close(\Lambda_{j+1}, \Lambda_j)$  for $j = 0, 1$, then $\Close(\Lambda_2, \Lambda_0)$.

\item  If $\Lambda \equiv \prod_{\theta \in \Theta} \Lambda_{\theta}$, $\Close(\{\theta \in \Theta \mid \Lambda_{\theta, 0} \neq \emptyset\}, \Theta)$, and $\Close(\Lambda_{\theta, 0}, \Lambda_{\theta})$ for each $\theta \in \Theta$ then $\Close(\bigcup_{\theta \in \Theta} \Lambda_{\theta, 0}, \Lambda)$.

\item  If  $I_0$ is an interval in $\Lambda$ and $\Close(\Lambda_0, \Lambda)$ then $\Close(\Lambda_0 \cap I_0, I_0)$.

\end{enumerate}

\end{lemma}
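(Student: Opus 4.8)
The statement to prove is Lemma \ref{basiccloseproperties}, which collects four basic properties of the ``close'' relation $\Close(\Lambda_0,\Lambda)$: that every infinite interval meets a close subset in an infinite set, that closeness is transitive down nested chains, that closeness is compatible with ordered concatenations, and that closeness localizes to intervals.

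\medskip

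The plan is to verify each of the four items directly from the definition, which says $\Close(\Lambda_0,\Lambda)$ holds precisely when every infinite interval of $\Lambda$ has nonempty intersection with $\Lambda_0$. For (i), I would argue by contradiction: if $I\cap\Lambda_0$ were finite for some infinite interval $I$, then removing those finitely many points from $I$ would still leave an infinite interval (a finite subset of a totally ordered set splits it into finitely many intervals, at least one of which is infinite), and that smaller infinite interval would be disjoint from $\Lambda_0$, contradicting $\Close(\Lambda_0,\Lambda)$. For (ii), given an infinite interval $I\subseteq\Lambda_0$, closeness of $\Lambda_1$ in $\Lambda_0$ gives that $I\cap\Lambda_1$ is nonempty, and by (i) it is in fact infinite; it is also an interval of $\Lambda_1$ (intersection of an interval with a subset, viewed inside that subset, is an interval), so closeness of $\Lambda_2$ in $\Lambda_1$ gives $I\cap\Lambda_1\cap\Lambda_2\neq\emptyset$, hence $I\cap\Lambda_2\neq\emptyset$.

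\medskip

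For (iii), let $I\subseteq\Lambda\equiv\prod_{\theta\in\Theta}\Lambda_\theta$ be an infinite interval. The set $\Theta_I=\{\theta\in\Theta: I\cap\Lambda_\theta\neq\emptyset\}$ is an interval of $\Theta$. There are two cases. If $\Theta_I$ is infinite, then since $\{\theta:\Lambda_{\theta,0}\neq\emptyset\}$ is close in $\Theta$, there is (by (i), applied inside $\Theta$, together with the observation that $I$ must fully contain $\Lambda_\theta$ for all but possibly the two endpoint indices of $\Theta_I$) some $\theta$ in the interior of $\Theta_I$ with $\Lambda_{\theta,0}\neq\emptyset$ and $\Lambda_\theta\subseteq I$; then $\emptyset\neq\Lambda_{\theta,0}\subseteq I\cap\bigcup_{\theta}\Lambda_{\theta,0}$. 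If $\Theta_I$ is finite, then $I$ is a finite concatenation of intervals of the form $I\cap\Lambda_\theta$, and since $I$ is infinite at least one such $I\cap\Lambda_\theta$ is infinite; it is an infinite interval of $\Lambda_\theta$, so by $\Close(\Lambda_{\theta,0},\Lambda_\theta)$ it meets $\Lambda_{\theta,0}$, giving a point of $I\cap\bigcup_\theta\Lambda_{\theta,0}$. Either way $\bigcup_\theta\Lambda_{\theta,0}$ is close in $\Lambda$. Finally, (iv) is immediate: an infinite interval of $I_0$ is in particular an infinite interval of $\Lambda$, so it meets $\Lambda_0$, and the intersection point lies in $\Lambda_0\cap I_0$.

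\medskip

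I do not expect any of this to be genuinely difficult; these are bookkeeping arguments about intervals in totally ordered sets. The one place requiring a little care is part (iii), where one must handle the two endpoint indices of $\Theta_I$ separately (there $I$ may meet $\Lambda_\theta$ in a proper sub-interval rather than all of it), and must correctly invoke (i) to upgrade ``nonempty intersection'' to ``infinite intersection'' so that an interior index can be found. Since the excerpt explicitly notes that the straightforward proofs appear in \cite{Cors}, I would present these arguments tersely.
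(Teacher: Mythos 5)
Your proof is correct. The paper itself does not prove this lemma --- it refers the reader to \cite{Cors} for the ``straightforward proofs'' --- so there is no in-text argument to compare against, but the four verifications you give are the expected direct ones. The only slip worth flagging is in (iii): the set $\Theta_I = \{\theta : I \cap \Lambda_\theta \neq \emptyset\}$ need not be an interval of $\Theta$ if some interior $\Lambda_\theta$ happens to be empty (such $\theta$ would be skipped). This is harmless --- one can replace $\Theta_I$ by the smallest interval of $\Theta$ containing it, noting that the extra indices $\theta$ have $\Lambda_\theta = \emptyset$ and hence $\Lambda_{\theta,0} = \emptyset$, so they cannot be among the infinitely many $\theta$ supplied by (i) with $\Lambda_{\theta,0} \neq \emptyset$; discarding the at most two extremal ones then yields a $\theta$ with $\Lambda_\theta \subseteq I$ and $\Lambda_{\theta,0} \neq \emptyset$, as you intended. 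The rest (the finite-$\Theta_I$ case, and parts (i), (ii), (iv)) is exactly right.
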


\begin{definition}  If $\Close(\Lambda_0, \Lambda)$ then for each interval $I \subseteq \Lambda$ we let $\alpha(I, \Lambda_0)$ denote the smallest interval in $\Lambda$ which includes the set $I \cap \Lambda_0$.  In other words $\alpha(I, \Lambda_0) =  \bigcup_{\lambda_0, \lambda_1 \in I \cap \Lambda_0, \lambda_0 \leq \lambda_1} [\lambda_0, \lambda_1]$ where the intervals $[\lambda_0, \lambda_1]$ are being considered in $\Lambda$.
\end{definition}

\begin{lemma}\label{prettyclose} Let $\Close(\Lambda_0, \Lambda)$ and $I \subseteq \Lambda$ be an interval.

\begin{enumerate}[(i)]

\item The inclusion $I \supseteq \alpha(I, \Lambda_0)$ holds and $\alpha(I, \Lambda_0) = \alpha(\alpha(I, \Lambda_0), \Lambda_0)$.

\item  The set $I \setminus \alpha(I, \Lambda_0)$ is the disjoint union of an initial and terminal subinterval $I_0, I_1 \subseteq I$ (either subinterval could be empty) with $|I_0|, |I_1| < \infty$.
\end{enumerate}

\end{lemma}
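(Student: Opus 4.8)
The plan is to reduce everything to the explicit description $\varpropto(I, \Lambda_0) = \bigcup [\lambda_0, \lambda_1]$, the union ranging over pairs $\lambda_0 \leq \lambda_1$ in $I \cap \Lambda_0$ (with the empty union in the degenerate case $I \cap \Lambda_0 = \emptyset$), and to observe that this set is an interval of $\Lambda$ sitting inside $I$. Part (i) is then essentially formal, while part (ii) is where the hypothesis $\Close(\Lambda_0, \Lambda)$ does its work. The only points requiring care are the degenerate case $I \cap \Lambda_0 = \emptyset$ and the (easy) verification that $\varpropto(I, \Lambda_0)$ really is an interval.

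For (i), since $I$ is an interval and each endpoint $\lambda_0, \lambda_1$ lies in $I$, every $[\lambda_0, \lambda_1]$ appearing in the union is contained in $I$, giving $\varpropto(I, \Lambda_0) \subseteq I$. For idempotence I would first check that $\varpropto(I, \Lambda_0) \cap \Lambda_0 = I \cap \Lambda_0$: the inclusion ``$\subseteq$'' is the containment just proved, intersected with $\Lambda_0$, and ``$\supseteq$'' holds because $I \cap \Lambda_0$ lies both in $\varpropto(I, \Lambda_0)$ (it is a subset of the pairs indexing the union, hence of the union itself) and in $\Lambda_0$. Consequently the union defining $\varpropto(\varpropto(I, \Lambda_0), \Lambda_0)$ runs over exactly the same set of pairs as the one defining $\varpropto(I, \Lambda_0)$, so the two sets coincide.

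For (ii), set $J = \varpropto(I, \Lambda_0)$. First I would confirm $J$ is an interval of $\Lambda$: given $a \leq b \leq c$ with $a, c \in J$, pick pairs with $a \in [\lambda_0, \lambda_1]$ and $c \in [\mu_0, \mu_1]$; then $\lambda_0 \leq b \leq \mu_1$ with $\lambda_0, \mu_1 \in I \cap \Lambda_0$, so $[\lambda_0, \mu_1]$ is one of the intervals in the union and $b \in J$. Since $J \subseteq I$ with both sets intervals, $I \setminus J$ splits as the disjoint union of an initial subinterval $I_0 = \{x \in I : x < y \text{ for all } y \in J\}$ and a terminal subinterval $I_1 = \{x \in I : x > y \text{ for all } y \in J\}$ of $I$ (taking $I_0 = I$ and $I_1 = \emptyset$ in the degenerate case $J = \emptyset$, which is exactly the case $I \cap \Lambda_0 = \emptyset$); disjointness when $J \neq \emptyset$ is because no element is both strictly below and strictly above a nonempty set.

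Finally I would show $I_0$ and $I_1$ are finite. They are intervals of $\Lambda$, being subintervals of the interval $I$. Moreover $I_0 \cap \Lambda_0 = \emptyset$: any element of $I_0 \cap \Lambda_0$ would lie in $I \cap \Lambda_0 \subseteq J$, contradicting that it is strictly below every element of $J$ (and when $J = \emptyset$ the set $I \cap \Lambda_0$ is already empty). Similarly $I_1 \cap \Lambda_0 = \emptyset$. An infinite interval of $\Lambda$ disjoint from $\Lambda_0$ would violate $\Close(\Lambda_0, \Lambda)$, so $|I_0|, |I_1| < \infty$. I expect no real obstacle here — the argument is short — but the bookkeeping around the empty cases is the one spot where a careless write-up could slip.
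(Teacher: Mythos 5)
Your proof is correct, and since the paper itself omits the argument (pointing to \cite{Cors} and calling it straightforward), the route you take---reduce to the explicit union formula, verify $\varpropto(I,\Lambda_0)$ is a subinterval of $I$ containing $I\cap\Lambda_0$, and invoke $\Close(\Lambda_0,\Lambda)$ only at the final finiteness step---is exactly the intended one. Your care with the degenerate case $I\cap\Lambda_0=\emptyset$ (where $J=\emptyset$ and $I$ itself must be finite) closes the only gap where a careless write-up could slip.
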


\begin{definition}  Two totally ordered sets $\Lambda$ and $\Theta$ are \emph{close-isomorphic} if there exist $\Lambda_0 \subseteq \Lambda$ and $\Theta_0 \subseteq \Theta$ with $\Close(\Lambda_0, \Lambda)$, $\Close(\Theta_0, \Theta)$ and $\Lambda_0$ order isomorphic to $\Theta_0$.  If $\iota$ is an order isomorphism between such a $\Lambda_0$ and $\Theta_0$ then we will call $\iota$ a \emph{close order isomorphism from $\Lambda$ to $\Theta$}.
\end{definition}

Clearly the inverse of a close order isomorphism (abbreviated \emph{coi}) from $\Lambda$ to $\Theta$ is a close order isomorphism from $\Theta$ to $\Lambda$.  Also, a coi between $\Lambda$ and $\Theta$ induces a coi between the reversed orders $\Lambda^{-1}$ and $\Theta^{-1}$ in the obvious way.

\begin{definition}  Given coi $\iota: \Lambda_0 \rightarrow \Theta_0$ between $\Lambda$ and $\Theta$ and an interval $I \subseteq \Lambda$ we let $\alpha(I, \iota)$ denote the smallest interval in $\Theta$ which includes the set $\iota(I \cap \Lambda_0)$.  Thus  $\alpha(I, \iota) = \bigcup_{\theta_0, \theta_1 \in \iota(I \cap \Lambda_0), \theta_0 \leq \theta_1} [\theta_0, \theta_1]$, where each interval $[\theta_0, \theta_1]$ is being considered in $\Theta$.
\end{definition}

\begin{lemma}\label{almostidentified}  If $\iota: \Lambda_0 \rightarrow \Theta_0$ is a coi between $\Lambda$ and $\Theta$ and $I \subseteq \Lambda$ is an interval then $\alpha(\alpha(I, \iota), \iota^{-1}) = \alpha(I, \Lambda_0)$.
\end{lemma}

\begin{lemma}\label{coilemma}  Let $I \subseteq \Lambda$ be an interval, $I \equiv I_0 \cdots I_k$, and $\iota: \Lambda_0 \rightarrow \Theta_0$ a coi from $\Lambda$ to $\Theta$.  Then there exist (possibly empty) finite subintervals $I_0', \ldots, I_{k+1}'$ of $\alpha(I, \iota)$ such that 

\begin{center}

$\alpha(I, \iota) \equiv I_0'\alpha(I_0, \iota) I_1' \alpha(I_1, \iota) I_2' \cdots \alpha(I_k, \iota) I_{k+1}'$.

\end{center}
\end{lemma}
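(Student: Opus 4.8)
\textbf{Proof proposal for Lemma \ref{coilemma}.}

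The plan is to induct on $k$, the number of pieces in the decomposition $I \equiv I_0 \cdots I_k$. The base case $k = 0$ is trivial: take $I_0' \equiv I_1' \equiv E$ (empty intervals) and the asserted decomposition is just $\varpropto(I, \iota) \equiv \varpropto(I_0, \iota)$, which holds because $I = I_0$. For the inductive step, the natural move is to write $I \equiv (I_0 \cdots I_{k-1}) I_k$, view $I_0 \cdots I_{k-1}$ as a single initial interval $J$ of $I$, and first handle the two-piece case $I \equiv J I_k$ before folding in the inductive hypothesis applied to $J$ with its decomposition $I_0 \cdots I_{k-1}$.

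So the heart of the argument is the two-piece case: given an interval $I \equiv I' I''$ with $I'$ initial and $I''$ terminal, I want finite intervals $A, B, C$ of $\varpropto(I, \iota)$ with $\varpropto(I, \iota) \equiv A \varpropto(I', \iota) B \varpropto(I'', \iota) C$. First I would record that $\varpropto(I', \iota)$ is an initial subinterval of $\varpropto(I, \iota)$ and $\varpropto(I'', \iota)$ a terminal one: this follows because $\iota$ is order-preserving, so every element of $\iota(I' \cap \Lambda_0)$ lies below every element of $\iota(I'' \cap \Lambda_0)$ in $\Theta$, and taking the smallest enclosing intervals preserves this separation. Consequently $\varpropto(I, \iota)$ contains $\varpropto(I', \iota)$ as an initial interval and $\varpropto(I'', \iota)$ as a terminal interval, and $\varpropto(I, \iota) = \varpropto(I', \iota) \cup M \cup \varpropto(I'', \iota)$ where $M$ is the interval in between (possibly overlapping at the endpoints is ruled out because the $\iota$-images are disjoint, $I'$ and $I''$ being disjoint). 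The point is then to show $M$, together with the little gaps between $\varpropto(I', \iota)$ and $M$ and between $M$ and $\varpropto(I'', \iota)$, is finite; equivalently, that $\varpropto(I, \iota) \setminus (\varpropto(I', \iota) \cup \varpropto(I'', \iota))$ is finite. This is where Lemma \ref{prettyclose}(ii) and Lemma \ref{almostidentified} do the work: apply $\varpropto(\cdot, \iota^{-1})$ to the interval $\varpropto(I, \iota)$ and use Lemma \ref{almostidentified} to get $\varpropto(\varpropto(I,\iota),\iota^{-1}) = \varpropto(I, \Lambda_0) \subseteq I$, and similarly for $I', I''$; then $\varpropto(I,\iota) \setminus (\varpropto(I',\iota)\cup\varpropto(I'',\iota))$ maps under $\iota^{-1}$ into $I \setminus (\varpropto(I',\Lambda_0) \cup \varpropto(I'',\Lambda_0))$, and since by Lemma \ref{prettyclose}(ii) each of $I' \setminus \varpropto(I',\Lambda_0)$ and $I'' \setminus \varpropto(I'',\Lambda_0)$ is the union of a finite initial and finite terminal interval, the leftover $I \setminus (\varpropto(I',\Lambda_0)\cup\varpropto(I'',\Lambda_0))$ is a finite union of finite intervals. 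Hence its image under the order-isomorphism $\iota$ onto a cofinal-coinitial subset of the leftover in $\Theta$ is finite, and since $\Lambda_0$ is close in $\Lambda$ one gets that the leftover itself in $\Theta$ is finite — here I would invoke $\Close$-ness via the definition of $\varpropto(\cdot,\iota)$ to control the finitely many points of $\varpropto(I,\iota)$ not in the image $\iota(I\cap\Lambda_0)$. Setting $A$ to be the initial gap, $B = M$ minus what $\varpropto(I',\iota),\varpropto(I'',\iota)$ already absorbed, and $C$ the terminal gap finishes the two-piece case.

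For the inductive step proper, apply the two-piece case to $I \equiv J I_k$ to get finite intervals $A, B, C$ with $\varpropto(I, \iota) \equiv A\,\varpropto(J, \iota)\,B\,\varpropto(I_k, \iota)\,C$, then apply the inductive hypothesis to $J \equiv I_0 \cdots I_{k-1}$ inside $\Lambda$ to obtain finite $I_0', \ldots, I_k'$ with $\varpropto(J, \iota) \equiv I_0' \varpropto(I_0,\iota) \cdots \varpropto(I_{k-1},\iota) I_k'$; substituting and absorbing $A$ into $I_0'$ (i.e.\ redefining the new initial finite interval as $A I_0'$, still finite) and $B$ into a merged finite interval $I_k' B$, and renaming $C$ as $I_{k+1}'$, yields the desired decomposition $\varpropto(I,\iota) \equiv I_0' \varpropto(I_0,\iota) I_1' \cdots \varpropto(I_k,\iota) I_{k+1}'$ with all $I_j'$ finite.

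I expect the main obstacle to be the finiteness bookkeeping in the two-piece case: carefully justifying that passing from $\varpropto(I,\iota)$ back through $\iota^{-1}$ and then out again does not introduce infinitely many stray points. The clean way around it is exactly Lemma \ref{almostidentified}, which identifies the double-$\varpropto$ with $\varpropto(I,\Lambda_0)$, reducing everything to the already-established finiteness statement Lemma \ref{prettyclose}(ii) inside $\Lambda$, and then transporting finiteness across $\iota$ using that $\iota$ is a bijection on $\Lambda_0 \to \Theta_0$ together with $\Close$-ness to bound the finitely many non-$\Theta_0$ points of $\varpropto(I,\iota)$. Everything else is routine interval arithmetic.
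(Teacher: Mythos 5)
The paper does not include a proof of this lemma --- the text explicitly defers: ``The straightforward proofs of the lemmas are given in that paper,'' referring to \cite{Cors} --- so there is nothing in-paper to compare against; I will assess your argument on its own.

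Your overall plan is sound and does prove the lemma, but the finiteness step at the heart of the two-piece case is stated imprecisely and contains one claim that is simply false if read literally. You write that you would ``invoke $\Close$-ness \ldots\ to control the finitely many points of $\varpropto(I,\iota)$ not in the image $\iota(I\cap\Lambda_0)$.'' There is no reason for $\varpropto(I,\iota)\setminus\iota(I\cap\Lambda_0)$, or even $\varpropto(I,\iota)\setminus\Theta_0$, to be finite; $\Theta_0$ may be a very sparse close subset of $\varpropto(I,\iota)$. What is true --- and what you actually need --- is only that the \emph{leftover} $\varpropto(I,\iota)\setminus(\varpropto(I',\iota)\cup\varpropto(I'',\iota))$ has finite intersection with $\Theta_0$: this is exactly what your $\iota^{-1}$/Lemma~\ref{almostidentified}/Lemma~\ref{prettyclose}(ii) chain delivers once you restrict $\iota^{-1}$ to the $\Theta_0$-part of the leftover (since $\iota^{-1}$ is only defined there). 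The leftover decomposes into at most three intervals; each has finitely many $\Theta_0$-points, so each is finite by Lemma~\ref{basiccloseproperties}(i). With this corrected wording the two-piece case goes through, and your induction and absorption of the finite intervals $A,B,C$ into adjacent finite gaps is fine.

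You may find it worth noting that a shorter, induction-free argument is available that bypasses Lemma~\ref{almostidentified} and Lemma~\ref{prettyclose}(ii) entirely. For any $\theta\in\varpropto(I,\iota)\cap\Theta_0$, the element $\iota^{-1}(\theta)$ lies between two points of $I\cap\Lambda_0$ and so lies in $I$, hence in some $I_j$, hence $\theta\in\varpropto(I_j,\iota)$. Thus every $\Theta_0$-point of $\varpropto(I,\iota)$ already lies in some $\varpropto(I_j,\iota)$, the $\varpropto(I_j,\iota)$ are pairwise disjoint and correctly ordered because $\iota$ is order-preserving, and each gap between consecutive (nonempty) $\varpropto(I_j,\iota)$'s is an interval containing \emph{no} $\Theta_0$-points at all, hence finite by closeness. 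This handles all $k$ at once and avoids the bookkeeping around ``cofinal-coinitial'' that was the weakest point of your write-up; it also makes visible that the outer gaps $I_0'$ and $I_{k+1}'$ are in fact always empty, which your absorption step quietly relies on not contradicting.
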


\begin{lemma}\label{coilemma2}  Let $\iota: \Lambda_0 \rightarrow \Theta_0$ be a coi from $\Lambda$ to $\Theta$.  If $I_0 \subseteq \Lambda$ is finite then $\alpha(I_0, \iota)$ is finite.
\end{lemma}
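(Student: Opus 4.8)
The statement to prove is Lemma~\ref{coilemma2}: if $\iota$ is a coi from $\Lambda$ to $\Theta$ and $I_0 \subseteq \Lambda$ is finite, then $\varpropto(I_0, \iota)$ is finite.

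\medskip

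The plan is to unwind the definitions and reduce everything to a counting argument about how many points of the close subsets can lie inside a finite interval. Recall that a coi $\iota$ is by definition an order isomorphism $\iota \colon \Lambda_0 \to \Theta_0$ where $\Close(\Lambda_0, \Lambda)$ and $\Close(\Theta_0, \Theta)$, and that $\varpropto(I_0, \iota)$ is the smallest interval of $\Theta$ containing $\iota(I_0 \cap \Lambda_0)$. So $\varpropto(I_0, \iota) = \bigcup [\theta_0, \theta_1]$ over $\theta_0 \le \theta_1$ in $\iota(I_0 \cap \Lambda_0)$, intervals taken in $\Theta$.

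\medskip

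First I would observe that since $I_0$ is finite, the set $I_0 \cap \Lambda_0$ is finite, hence so is its image $\iota(I_0 \cap \Lambda_0)$ under the order isomorphism $\iota$. If $\iota(I_0 \cap \Lambda_0)$ is empty then $\varpropto(I_0, \iota) = \emptyset$ and we are done; otherwise it has a minimum element $\theta_0$ and a maximum element $\theta_1$, and by the explicit description $\varpropto(I_0, \iota) = [\theta_0, \theta_1]$, the closed interval in $\Theta$. So the whole question is whether the closed interval $[\theta_0, \theta_1] \subseteq \Theta$ is finite. Now $\theta_0, \theta_1 \in \Theta_0$, and $\iota^{-1}(\theta_0), \iota^{-1}(\theta_1) \in I_0 \cap \Lambda_0 \subseteq I_0$. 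Since $\iota^{-1}$ is an order isomorphism $\Theta_0 \to \Lambda_0$, the set $[\theta_0,\theta_1] \cap \Theta_0$ is order isomorphic to $[\iota^{-1}(\theta_0), \iota^{-1}(\theta_1)] \cap \Lambda_0$, which is contained in $I_0$ and hence finite. The key remaining point is to pass from ``$[\theta_0,\theta_1]\cap\Theta_0$ is finite'' to ``$[\theta_0,\theta_1]$ is finite'': this is exactly where closeness is used. If $[\theta_0,\theta_1]$ were infinite, then (being an interval in $\Theta$ between two points of $\Theta_0$) it would be an infinite interval of $\Theta$, so by $\Close(\Theta_0,\Theta)$ — indeed by Lemma~\ref{basiccloseproperties}(i) — the intersection $[\theta_0,\theta_1]\cap\Theta_0$ would be infinite, a contradiction. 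Hence $[\theta_0,\theta_1]$ is finite, i.e.\ $\varpropto(I_0,\iota)$ is finite.

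\medskip

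I do not expect any real obstacle here; the argument is a short chain of definition-chasing, with the only substantive input being Lemma~\ref{basiccloseproperties}(i) (an infinite interval meets a close set in an infinite set) together with the fact that an order isomorphism carries a finite set to a finite set. The one place requiring a little care is the degenerate case where $I_0 \cap \Lambda_0$ is empty (giving the empty interval) and, relatedly, noticing that even though $I_0$ need not meet $\Lambda_0$ in a way that ``spans'' it, the description of $\varpropto$ as the closed interval between $\min$ and $\max$ of $\iota(I_0\cap\Lambda_0)$ is all that is needed. Everything else is bookkeeping.
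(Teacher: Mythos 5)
Your proof is correct; the paper defers the proof of this lemma (along with the neighboring ones in Section~\ref{Howtoisomorphize}) to the reference \cite{Cors}, so there is no in-paper argument to compare against, but what you have written is the expected short reduction: restrict to the finite image $\iota(I_0 \cap \Lambda_0)$, take the closed interval $[\theta_0,\theta_1]$ between its extremes, pull back along $\iota^{-1}$ to see that $[\theta_0,\theta_1] \cap \Theta_0$ is finite, and then invoke Lemma~\ref{basiccloseproperties}(i) in contrapositive to conclude $[\theta_0,\theta_1]$ itself is finite. One small point worth making explicit: the containment $[\iota^{-1}(\theta_0), \iota^{-1}(\theta_1)] \cap \Lambda_0 \subseteq I_0$ relies on $I_0$ being an \emph{interval} (so that it contains the whole interval between two of its own points); the lemma statement literally says only ``$I_0 \subseteq \Lambda$ is finite'', but since $\varpropto(\cdot,\iota)$ is defined only for intervals of $\Lambda$, that reading is forced and your use of it is fine.
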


\begin{definition}  Let $\{G_n\}_{n\in \omega}$ and $\{K_n\}_{n \in \omega}$ be sequences of groups.  For words $W \in \Red(\{G_n\}_{n\in \omega})$ and $U \in \Red(\{K_n\}_{n \in \omega})$ we let $\coi(W, \iota, U)$ denote that $\iota$ is a coi from $\overline{W}$ to $\overline{U}$.  We will call such an ordered triple a \emph{coi triple} from $\Red(\{G_n\}_{n\in \omega})$ to $\Red(\{K_n\}_{n \in \omega})$.
\end{definition}

\begin{definition}  A collection $\{\coi(W_x, \iota_x, U_x)\}_{x\in X}$ of coi triples from $\Red(\{G_n\}_{n\in \omega})$ to $\Red(\{K_n\}_{n \in \omega})$ is \emph{coherent} if for any choice of $x_0, x_1 \in X$, intervals $I_0 \subseteq \overline{W_{x_0}}$ and $I_1 \subseteq \overline{W_{x_1}}$ and $\delta \in \{-1, 1\}$ such that $W_{x_0} \upharpoonright I_0 \equiv (W_{x_1} \upharpoonright I_1)^{\delta}$ we get

\begin{center}

$[[U_{x_0} \upharpoonright \alpha(I_0, \iota_{x_0})]] = [[(U_{x_1}\upharpoonright \alpha(I_1, \iota_{x_1}))^{\delta}]]$

\end{center}
 
\noindent and similarly for any choice of $x_2, x_3 \in X$, intervals $I_2 \subseteq \overline{U_{x_2}}$ and $I_3 \subseteq \overline{U_{x_3}}$ and $\epsilon \in \{-1, 1\}$ such that $U_{x_2} \upharpoonright I_2 \equiv (U_{x_3} \upharpoonright I_3)^{\epsilon}$ we get

\begin{center}

$[[W_{x_2} \upharpoonright \alpha(I_2, \iota_{x_2}^{-1})]] = [[(W_{x_3} \upharpoonright \alpha(I_3, \iota_{x_3}^{-1}))^{\epsilon}]]$.

\end{center}
\end{definition}

\noindent If collection of coi triples $\{\coi(W_x, \iota_x, U_x)\}_{x\in X}$ from $\Red(\{G_n\}_{n\in \omega})$ to $\Red(\{K_n\}_{n \in \omega})$ is coherent then the collection of coi triples $\{\coi(U_x, \iota_x^{-1}, W_x)\}_{x\in X}$ from $\Red(\{K_n\}_{n \in \omega})$ to $\Red(\{G_n\}_{n\in \omega})$ is also coherent.  

A coherent collection need not be a one-to-one pairing of a subset of elements in $\Red(\{G_n\}_{n\in \omega})$ with elements in $\Red(\{K_n\}_{n\in \omega})$.  If, for example, each element of $\{W_x\}_{x \in X}$ has $|\overline{W_x}| = 1$ then the collection $\{(W_x, \iota_x, E)\}_{x \in X}$ is coherent (each $\iota_x$ is the empty function).  The proof of the following is clear (for example, see the corresponding result in \cite{Cors}).

\begin{lemma}\label{ascendingchaincoi}  Suppose that $\Theta$ is a totally ordered set and that $\{\mathcal{T}_{\theta}\}_{\theta \in \Theta}$ is a collection of coherent collections of coi triples from $\Red(\{G_n\}_{n \in \omega})$ to $\Red(\{K_n\}_{n \in \omega})$ such that $\theta \leq \theta'$ implies $\mathcal{T}_{\theta} \subseteq \mathcal{T}_{\theta'}$.  Then $\bigcup_{\theta \in \Theta} \mathcal{T}_{\theta}$ is coherent.
\end{lemma}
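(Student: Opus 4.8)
The statement to prove is Lemma~\ref{ascendingchaincoi}: a directed (by inclusion) union of coherent collections of coi triples is coherent.

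\medskip

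The plan is to verify the defining conditions of coherence for $\mathcal{T} := \bigcup_{\theta \in \Theta} \mathcal{T}_{\theta}$ directly, reducing each instance of the condition to a single index $\theta$ where it is already known to hold. Concretely, the first coherence condition asks: for any two coi triples $\coi(W_{x_0}, \iota_{x_0}, U_{x_0})$ and $\coi(W_{x_1}, \iota_{x_1}, U_{x_1})$ lying in $\mathcal{T}$, any intervals $I_0 \subseteq \overline{W_{x_0}}$, $I_1 \subseteq \overline{W_{x_1}}$, and any $\delta \in \{-1,1\}$ with $W_{x_0} \upharpoonright I_0 \equiv (W_{x_1} \upharpoonright I_1)^{\delta}$, we have $[[U_{x_0} \upharpoonright \varpropto(I_0, \iota_{x_0})]] = [[(U_{x_1} \upharpoonright \varpropto(I_1, \iota_{x_1}))^{\delta}]]$. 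Since the two chosen triples belong to $\mathcal{T}$, there are indices $\theta_0, \theta_1 \in \Theta$ with the first triple in $\mathcal{T}_{\theta_0}$ and the second in $\mathcal{T}_{\theta_1}$. Taking $\theta = \max\{\theta_0, \theta_1\}$ (available since $\Theta$ is totally ordered and the chain is monotone), the nesting hypothesis $\mathcal{T}_{\theta_0}, \mathcal{T}_{\theta_1} \subseteq \mathcal{T}_{\theta}$ puts both triples into the single coherent collection $\mathcal{T}_{\theta}$; applying coherence of $\mathcal{T}_{\theta}$ to the very same data $(x_0, x_1, I_0, I_1, \delta)$ yields the desired equation. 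The second coherence condition — the symmetric statement for intervals $I_2 \subseteq \overline{U_{x_2}}$, $I_3 \subseteq \overline{U_{x_3}}$, and $\epsilon \in \{-1,1\}$ with $U_{x_2} \upharpoonright I_2 \equiv (U_{x_3} \upharpoonright I_3)^{\epsilon}$ — is handled identically by choosing a common $\theta$ above the two relevant indices and invoking coherence of $\mathcal{T}_{\theta}$.

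\medskip

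There is essentially no obstacle here; the only point requiring any care is the extraction of a single index dominating finitely many given indices, which uses that $\Theta$ is totally ordered together with the monotonicity $\theta \leq \theta' \Rightarrow \mathcal{T}_{\theta} \subseteq \mathcal{T}_{\theta'}$ (note that for a finite set of indices a totally ordered index set suffices — no directedness beyond linear order is needed). One should also note that the quantities appearing in the conclusion, namely $\varpropto(I_j, \iota_{x_j})$ and the equivalence classes $[[\cdot]]$, depend only on the triple and the interval, not on the ambient collection, so there is no ambiguity in transferring the statement between $\mathcal{T}$ and $\mathcal{T}_{\theta}$. As the excerpt itself indicates (``The proof of the following is clear''), the argument is a routine union-of-a-chain verification, and I would present it in a few lines along the lines above.
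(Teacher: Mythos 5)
Your argument is correct and is precisely the routine chain-union verification the paper has in mind when it declares the proof ``clear'' and omits it: the coherence condition only ever constrains pairs of triples, and any pair from $\bigcup_{\theta}\mathcal{T}_{\theta}$ already lies in a single $\mathcal{T}_{\max\{\theta_0,\theta_1\}}$, whose coherence gives the required identity. Nothing to add.
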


The proof of the following follows closely that of a comparable claim in \cite{Cors}; we include the proof here since it is technical and also for the sake of completeness.

\begin{proposition}\label{welldefinedfuns}  From a coherent collection $\{\coi(W_x, \iota_x, U_x)\}_{x \in X}$ of coi triples from $\Red(\{G_n\}_{n \in \omega})$ to $\Red(\{K_n\}_{n \in \omega})$ we obtain a well-defined function $$\phi_0 : \Fine(\{W_x\}_{x \in X}) \rightarrow \mathcal{A}(\{K_n\}_{n \in \omega})$$ given by

\begin{center}

$W \mapsto [[(U_{x_{r_0}} \upharpoonright \alpha(I_0, \iota_{x_{r_0}}))^{\delta_0}]][[(U_{x_{r_1}} \upharpoonright \alpha(I_1, \iota_{x_{r_1}}))^{\delta_1}]]\cdots[[(U_{x_{r_s}}\upharpoonright \alpha(I_s, \iota_{x_{r_s}}))^{\delta_s}]]$

\end{center}

\noindent where

\begin{itemize}

\item $W  \equiv W_0W_1 \cdots W_k$ is a decomposition as in Lemma \ref{nicefine};

\item $0 \leq r_0 < r_1 < \cdots < r_s \leq k$ are such that $\{r_0, \ldots, r_s\} = \{r \in \{0, 1, \ldots, k\}\mid |\overline{W_r}| \geq 2\}$; and 

\item $W_{r_j} \equiv (W_{x_{r_j}} \upharpoonright I_j)^{\delta_j}$ with $I_j \subseteq \overline{W_{x_{r_j}}}$ an interval and $\delta_j \in \{-1, 1\}$ for each $0 \leq j \leq s$.

\end{itemize}

There is also a comparable well-defined function $$\phi_1: \Fine(\{U_x\}_{x \in X}) \rightarrow \mathcal{A}(\{G_n\}_{n \in \omega})$$

\noindent given by

\begin{center}

$U \mapsto [[(W_{x_{r_0}} \upharpoonright \alpha(I_0, \iota_{x_{r_0}}^{-1}))^{\delta_0}]][[(W_{x_{r_1}} \upharpoonright \alpha(I_1, \iota_{x_{r_1}}^{-1}))^{\delta_1}]]\cdots[[(W_{x_{r_s}}\upharpoonright \alpha(I_s, \iota_{x_{r_s}}^{-1}))^{\delta_s}]]$

\end{center}

\noindent where

\begin{itemize}

\item $U  \equiv U_0U_1 \cdots U_k$ is a decomposition as in Lemma \ref{nicefine};

\item $0 \leq r_0 < r_1 < \cdots < r_s \leq k$ are such that $\{r_0, \ldots, r_s\} = \{r \in \{0, 1, \ldots, k\}\mid |\overline{U_r}| \geq 2\}$; and 

\item $U_{r_j} \equiv (U_{x_{r_j}} \upharpoonright I_j)^{\delta_j}$ with $I_j \subseteq \overline{U_{x_{r_j}}}$ an interval and $\delta_j \in \{-1, 1\}$ for each $0 \leq j \leq s$.

\end{itemize}

\end{proposition}

\begin{proof}  We must show that the described function is well-defined; in other words the function is independent of all of the choices made in the description.  Therefore, suppose that we can write

\begin{itemize}

\item $\overline{W} \equiv J_0J_1 \cdots J_k$ where $W \equiv (W \upharpoonright J_0) \cdots (W \upharpoonright J_k)$ is a decomposition as in Lemma \ref{nicefine};

\item $0 \leq r_0 < r_1 < \cdots < r_s \leq k$ are such that $\{r_0, \ldots, r_s\} = \{r \in \{0, 1, \ldots, k\}\mid |J_r| \geq 2\}$; and 

\item $W \upharpoonright J_{r_j} \equiv (W_{x_{r_j}} \upharpoonright I_j)^{\delta_j}$ with $I_j \subseteq \overline{W_{x_{r_j}}}$ an interval and $\delta_j \in \{-1, 1\}$ for each $0 \leq j \leq s$;

\end{itemize}

\noindent and also we may write

\begin{itemize}

\item $\overline{W} \equiv L_0L_1 \cdots L_p$ where $W \equiv (W \upharpoonright L_0) \cdots (W \upharpoonright L_p)$ is a decomposition as in Lemma \ref{nicefine};

\item $0 \leq q_0 < q_1 < \cdots < q_t \leq p$ are such that $\{q_0, \ldots, q_t\} = \{q \in \{0, 1, \ldots, p\}\mid |L_q| \geq 2\}$; and 

\item $W \upharpoonright L_{q_l} \equiv (W_{x_{q_l}'} \upharpoonright E_l)^{\epsilon_l}$ with $E_l \subseteq \overline{W_{x_{q_l}'}}$ an interval and $\epsilon_l \in \{-1, 1\}$ for each $0 \leq l \leq t$.
\end{itemize}

\noindent We must show that the elements 

\begin{center}
$[[(U_{x_{r_0}} \upharpoonright \alpha(I_0, \iota_{x_{r_0}}))^{\delta_0}]]\cdots[[(U_{x_{r_s}}, \alpha(I_s, \iota_{x_{r_s}}))^{\delta_s}]]$
\end{center}

\noindent and

\begin{center}
$[[(U_{x_{q_0}'} \upharpoonright \alpha(E_0, \iota_{x_{q_0}'}))^{\epsilon_0}]]\cdots[[(U_{x_{q_t}'} \upharpoonright \alpha(E_t, \iota_{x_{q_t}'}))^{\epsilon_t}]]$
\end{center}

\noindent are equal in $\mathcal{A}(\{K_n\}_{n \in \omega})$.

First, we will let $f_{j}: J_{r_j} \rightarrow I_j^{\delta_j}$ be the order isomorphism witnessing $W \upharpoonright J_{r_j} \equiv  (W_{x_{r_j}} \upharpoonright I_j)^{\delta_j}$ for each $0 \leq j \leq s$.  Let $g_l: L_{q_l} \rightarrow E_l^{\epsilon_l}$ be the order isomorphism witnessing $W \upharpoonright L_{q_l} \equiv (W_{x_{q_l}} \upharpoonright E_l)^{\epsilon_l}$ for each $0 \leq l \leq t$.

We will take $\mathbb{J}$ to be the set of intervals consisting of nonempty intersections of one of the $J_r$ with one of the $L_q$, where $0 \leq r \leq k$ and $0 \leq q \leq p$.  The elements of $\mathbb{J}$ are pairwise disjoint intervals in $\overline{W}$, ordered in the natural way, such that $\overline{W} \equiv \prod_{M \in \mathbb{J}} M$.  For each $0 \leq r \leq k$ we let $\mathbb{J}_{r}$ be the set of those elements of $\mathbb{J}$ which are subsets of $J_{r}$, and for each $0 \leq q \leq p$ we let $\mathbb{J}_{(q)}$ be the set of those elements of $\mathbb{J}$ which are subsets of $L_{q}$.  Let $\overline{\mathbb{J}} \subseteq \mathbb{J}$ be the subset in $\mathbb{J}$ consisting of those intervals which are of cardinality greater than $1$, and $\overline{\mathbb{J}_{r}} = \mathbb{J}_{r} \cap \overline{\mathbb{J}}$ for each $0 \leq r \leq k$ and $\overline{\mathbb{J}_{(q)}} = \mathbb{J}_{(q)} \cap \overline{\mathbb{J}}$ and $0 \leq q \leq p$.

Define $f_{-}: \overline{\mathbb{J}} \rightarrow \{0, \ldots, s\}$ by $f_{-}(M) = j$ where $M \subseteq J_{r_j}$ and $f^{-}: \overline{\mathbb{J}} \rightarrow \{0, \ldots, t\}$ by $f^{-}(M) = l$ where $M \subseteq L_{q_l}$.  Notice that for each $M \in \overline{\mathbb{J}}$ we have $f_{f_{-}(M)}(M) \subseteq (\overline{W_{x_{r_{f_{-}(M)}}}})^{\delta_{f_{-}(M)}}$, and so $(f_{f_{-}(M)}(M))^{\delta_{f_{-}(M)}} \subseteq \overline{W_{x_{r_{f_{-}(M)}}}}$.  Similarly $(f_{f^{-}(M)}(M))^{\epsilon_{f^{-}(M)}} \subseteq \overline{W_{x_{q_{f^{-}(M)}}'}}$ and also

$$
\begin{array}{ll}
(W_{x_{r_{f_{-}(M)}}} \upharpoonright (f_{f_{-}(M)}(M))^{\delta_{f_{-}(M)}})^{\delta_{f_{-}(M)}} & \equiv  W \upharpoonright M\\
& \equiv (W_{x_{q_{f^{-}(M)}}'} \upharpoonright (f_{f^{-}(M)}(M))^{\epsilon_{f^{-}(M)}})^{\epsilon_{f^{-}(M)}}.
\end{array}
$$

\noindent Therefore by the coherence of the collection of coi triples we see for each $M \in \overline{\mathbb{J}}$ that

\begin{center}

$$[[(U_{x_{r_{f_{-}(M)}}} \upharpoonright \alpha((f_{f_{-}(M)}(M))^{\delta_{f_{-}(M)}} , \iota_{x_{r_{f_{-}(M)}}}))^{\delta_{f_{-}(M)}}]]$$

$$= [[(U_{x_{q_{f^{-}(M)}}'} \upharpoonright \alpha((g_{f^{-}(M)}(M))^{\epsilon_{f^{-}(M)}} , \iota_{x_{q_{f^{-}(M)}}'}))^{\epsilon_{f^{-}(M)}}]].\eqno{(*)}
$$.

\end{center}

Next, we claim that for each $0 \leq j \leq s$ we have

$$
\begin{array}{ll}
[[(U_{x_{r_j}}\upharpoonright \alpha(I_j, \iota_{x_{r_j}}))^{\delta_j}]] & =  \prod_{M \in \overline{\mathbb{J}_{r_j}}} [[(U_{x_{r_j}}\upharpoonright \alpha((f_j(M))^{\delta_j}, \iota_{x_{r_j}}))^{\delta_j}]].
\end{array}\eqno{(**)}
$$

\noindent To see why this is true, we recall that for each $M \in \mathbb{J}_{r_j}$ the set $f_j(M)$ is an interval in $I_j^{\delta_j}$, and so $(f_j(M))^{\delta_j}$ is an interval in $I_j$.  Then $I_j \equiv \prod_{M \in \mathbb{J}_{r_j}^{\delta_j}} (f_j(M))^{\delta_j}$.  Then we may write

\begin{center}
$\alpha(I_j, \iota_{x_{r_j}}) \equiv (\prod_{M \in \mathbb{J}_{r_j}^{\delta_j}} Q_M\alpha((f_j(M))^{\delta_j}, \iota_{x_{r_j}}))Q_f$
\end{center}

\noindent where each element of $\{Q_M\}_{M \in \mathbb{J}_{r_j}^{\delta_j}} \cup \{Q_f\}$ is a finite interval, by Lemma \ref{coilemma}.  Also, whenever $|M| = 1$ (that is, when $M \in \mathbb{J}_{r_j} \setminus \overline{\mathbb{J}_{r_j}}$) we have by Lemma \ref{coilemma2} that $\alpha(f_j(M), \iota_{x_{r_j}})$ is finite.  Now 

$$
\begin{array}{ll}
U_{x_{r_k}}\upharpoonright \alpha(I_k, \iota_{x_{r_j}}) & \equiv  (\prod_{M \in \mathbb{J}_{r_j}^{\delta_j}} (U_{x_{r_j}}\upharpoonright Q_M)(U_{x_{r_j}}\upharpoonright\alpha((f_j(M))^{\delta_j}, \iota_{x_{r_j}})))\\
& \cdot (U_{x_{r_j}}\upharpoonright Q_f)
\end{array}
$$

\noindent and by taking the $[[\cdot]]$ class of both sides and deleting $[[U_{x_{r_j}}\upharpoonright Q_f]]$, and all $[[U_{x_{r_j}}\upharpoonright Q_M]]$, and all $U_{x_{r_j}}\upharpoonright\alpha((f_j(M))^{\delta_j}, \iota_{x_{r_j}})$ for $M \in \mathbb{J}_{r_j} \setminus \overline{\mathbb{J}_{r_j}}$ we see that

$$
\begin{array}{ll}
[[U_{x_{r_j}}\upharpoonright \alpha(I_j, \iota_{x_{r_j}})]] & =  \prod_{M \in \overline{\mathbb{J}_{r_j}}^{\delta_j}} [[U_{x_{r_j}}\upharpoonright\alpha((f_j(M))^{\delta_j}, \iota_{x_{r_j}})]]
\end{array}
$$

\noindent and by taking the $\delta_j$ power of both sides we obtain the desired equality.  By the same reasoning we have for each $0 \leq l \leq t$ that  

$$
\begin{array}{ll}
[[(U_{x_{q_l}'} \upharpoonright \alpha(E_l, \iota_{x_{q_l}'}))^{\epsilon_l}]] & =  \prod_{M \in \overline{\mathbb{J}_{(q_l)}}}[[(U_{x_{q_l}'} \upharpoonright \alpha((g_l(M))^{\epsilon_l} , \iota_{x_{q_l}'}))^{\epsilon_l}]].
\end{array}
$$

\noindent Thus 

$$
\begin{array}{ll}
\prod_{j = 0}^s[[(U_{x_{r_j}}\upharpoonright \alpha(I_j, \iota_{x_{r_j}}))^{\delta_j}]]\\
= \prod_{j = 0}^s \prod_{M \in \overline{\mathbb{J}_{r_j}}} [[(U_{x_{r_j}}\upharpoonright \alpha((f_j(M))^{\delta_j}, \iota_{x_{r_j}}))^{\delta_j}]]\\
=  \prod_{j = 0}^s\prod_{M \in \overline{\mathbb{J}_{r_j}}}[[(U_{x_{r_{f_{-}(M)}}}\upharpoonright \alpha((f_{f_{-}(M)}(M))^{\delta_{f_{-}(M)}} , \iota_{x_{r_{f_{-}(M)}}}))^{\delta_{f_{-}(M)}}]]\\
=  \prod_{j = 0}^s\prod_{M \in \overline{\mathbb{J}_{r_j}}}[[(U_{x_{q_{f^{-}(M)}}'} \upharpoonright \alpha((g_{f^{-}(M)}(M))^{\epsilon_{f^{-}(M)}} , \iota_{x_{q_{f^{-}(M)}}'}))^{\epsilon_{f^{-}(M)}}]]\\
=  \prod_{M \in \overline{\mathbb{J}}}[[(U_{x_{q_{f^{-}(M)}}'} \upharpoonright \alpha((g_{f^{-}(M)}(M))^{\epsilon_{f^{-}(M)}} , \iota_{x_{q_{f^{-}(M)}}'}))^{\epsilon_{f^{-}(M)}}]]\\
=  \prod_{l = 0}^t \prod_{M \in \overline{\mathbb{J}_{(q_l)}}}[[(U_{x_{q_{f^{-}(M)}}'} \upharpoonright \alpha((g_{f^{-}(M)}(M))^{\epsilon_{f^{-}(M)}} , \iota_{x_{q_{f^{-}(M)}}'}))^{\epsilon_{f^{-}(M)}}]]\\
=  \prod_{l = 0}^t \prod_{M \in \overline{\mathbb{J}_{(q_l)}}}[[(U_{x_{q_l}'} \upharpoonright \alpha((g_l(M))^{\epsilon_l} , \iota_{x_{q_l}'}))^{\epsilon_l}]]\\
= \prod_{l = 0}^t[[(U_{x_{q_l}'} \upharpoonright \alpha(E_l, \iota_{x_{q_l}'}))^{\epsilon_l}]]
\end{array}
$$

\noindent where the first equality was established in (**), the second equality is an obvious relabeling, the third equality is a finite term-by-term replacement using (*), the fourth and fifth equalities are each a re-indexing, the sixth equality is an obvious relabeling, the seventh equality was established in the analogue of (**).  The proof for the well-definedness of the comparably defined $\phi_1$ is symmetric.
\end{proof}

Now we are approaching the main utility of coi triples.  We recall that $\beth: \topprod_{n \in \omega} H_n \rightarrow \mathcal{A}(\{H_n\}_{n \in \omega})$ is the quotient map.

\begin{theorem}\label{coicollectiongivesiso}  Suppose we have a coherent collection $\{\coi(W_x, \iota_x, U_x)\}_{x \in X}$ of coi triples from $\Red(\{G_n\}_{n \in \omega})$ to $\Red(\{K_n\}_{n \in \omega})$.  The functions $\phi_0 : \Fine(\{W_x\}_{x \in X}) \rightarrow \mathcal{A}(\{K_n\}_{n \in \omega})$ and $\phi_1: \Fine(\{U_x\}_{x \in X}) \rightarrow \mathcal{A}(\{G_n\}_{n \in \omega})$ defined in Lemma \ref{welldefinedfuns} are homomorphisms.  Moreover these descend to isomorphisms

\begin{center}
$\Phi_0: \beth(\Fine(\{W_x\}_{x \in X})) \rightarrow \beth(\Fine(\{U_x\}_{x \in X}))$
\end{center}

\noindent and

\begin{center}
$\Phi_1:  \beth(\Fine(\{U_x\}_{x \in X})) \rightarrow \beth(\Fine(\{W_x\}_{x \in X}))$
\end{center}

\noindent with $\Phi_1 = \Phi_0^{-1}$.
\end{theorem}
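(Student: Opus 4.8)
The plan is to verify, in order: that $\phi_0$ and $\phi_1$ are homomorphisms; that $\phi_0$ maps $\Fine(\{W_x\}_{x\in X})$ into $\beth(\Fine(\{U_x\}_{x\in X}))$ and symmetrically for $\phi_1$; that $\phi_0$ annihilates $\langle\langle *_{n\in\omega}G_n\rangle\rangle\cap\Fine(\{W_x\}_{x\in X})$, so that it descends through $\beth$ to a homomorphism $\Phi_0$; and finally that $\Phi_1\circ\Phi_0$ and $\Phi_0\circ\Phi_1$ are identity maps, which forces both to be isomorphisms with $\Phi_1=\Phi_0^{-1}$. Throughout, since the hypothesis is symmetric under interchanging the two families (a coherent collection of coi triples from $\Red(\{G_n\}_{n\in\omega})$ to $\Red(\{K_n\}_{n\in\omega})$ induces a coherent one in the other direction), every statement proved for $\phi_0$ yields the corresponding one for $\phi_1$.

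For the homomorphism property I would first record two base cases. If $A,B\in\Fine(\{W_x\}_{x\in X})$ and the concatenation $AB$ is already reduced (so $AB\equiv A\circledcirc B\in\Fine(\{W_x\}_{x\in X})$), then concatenating decompositions of $A$ and of $B$ as in Lemma \ref{nicefine} produces such a decomposition of $AB$, so $\phi_0(AB)=\phi_0(A)\phi_0(B)$ by the well-definedness of $\phi_0$ (Proposition \ref{welldefinedfuns}); and reversing a decomposition as in Lemma \ref{nicefine} gives $\phi_0(A^{-1})=\phi_0(A)^{-1}$. For arbitrary $A,B$, apply Lemma \ref{Eda} to write $A\equiv A_0A_1$, $B\equiv B_0B_1$ with $B_0\equiv A_1^{-1}$ and $A_0B_1$ quasi-reduced; all four pieces lie in $\Fine(\{W_x\}_{x\in X})$ because $\Sub(\Fine(\{W_x\}_{x\in X}))=\Fine(\{W_x\}_{x\in X})$. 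Then $A\circledcirc B=\Red(A_0B_1)$, and passing from $A_0B_1$ to its reduction via Observation \ref{howtomultuply} (which alters things only by absorbing at most one singleton letter), the base cases give $\phi_0(A\circledcirc B)=\phi_0(A_0)\phi_0(B_1)=\phi_0(A_0)\phi_0(A_1)\phi_0(A_1)^{-1}\phi_0(B_1)=\phi_0(A)\phi_0(B)$. The image statement is immediate: $\phi_0(W)$ is a finite product of elements $[[(U_x\upharpoonright J)^{\pm1}]]$ with $U_x\upharpoonright J\in\Sub(\{U_x\}_{x\in X})\subseteq\Fine(\{U_x\}_{x\in X})$.

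The crux is that $\phi_0$ kills $\langle\langle *_{n\in\omega}G_n\rangle\rangle\cap\Fine(\{W_x\}_{x\in X})$. First, $\phi_0$ kills every finite reduced word $F\in\Fine(\{W_x\}_{x\in X})$: in a decomposition of $F$ as in Lemma \ref{nicefine} every piece is finite, so every subword-piece $W_x\upharpoonright I$ occurring has $I$ finite, hence $\varpropto(I,\iota_x)$ finite by Lemma \ref{coilemma2}, hence contributes $[[E]]$. Next comes the key combinatorial input: a word $W$ lies in $\langle\langle *_{n\in\omega}G_n\rangle\rangle$ exactly when, for some $N$, deleting from $W$ every letter lying in $G_0\cup\cdots\cup G_N$ yields a word that is $\sim E$ --- the forward direction because a finite product of conjugates of finite words involves only finitely many $G_n$ among its finite-word letters, and deleting those groups' letters is a homomorphism killing each conjugate. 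Given $W\in\langle\langle *_{n\in\omega}G_n\rangle\rangle\cap\Fine(\{W_x\}_{x\in X})$ and such an $N$, let $p_1<\cdots<p_r$ enumerate the positions of $\overline{W}$ carrying letters from $G_0\cup\cdots\cup G_N$. With $h_1:=W\upharpoonright\{i<p_1\}$, $g_1:=W(p_1)$ (a one-letter finite word which lies in $\Fine(\{W_x\}_{x\in X})$, by Lemma \ref{nicefine}, since it occurs in a piece of $W$), and $W':=h_1\circledcirc(W\upharpoonright\{i>p_1\})$, one checks that $W=(h_1\circledcirc g_1\circledcirc h_1^{-1})\circledcirc W'$, that $W'\in\Fine(\{W_x\}_{x\in X})$, that $W'$ has strictly fewer letters in $G_0\cup\cdots\cup G_N$, and that the deletion map sends $W'$ to the same element as $W$ (so the criterion persists). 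Iterating (with $r=0$ forcing $W\equiv E$) exhibits $W$ as a finite $\circledcirc$-product of conjugates $h_i\circledcirc g_i\circledcirc h_i^{-1}$ with $h_i\in\Fine(\{W_x\}_{x\in X})$ and $g_i$ a finite word in $\Fine(\{W_x\}_{x\in X})$; being a homomorphism that kills each $g_i$, $\phi_0$ kills $W$. This step --- converting $\beth$-triviality into a factorization that lives inside $\Fine(\{W_x\}_{x\in X})$ --- is the main obstacle; everything else is bookkeeping. Consequently $\phi_0$ descends to a homomorphism $\Phi_0:\beth(\Fine(\{W_x\}_{x\in X}))\to\beth(\Fine(\{U_x\}_{x\in X}))$, and symmetrically $\phi_1$ descends to $\Phi_1$.

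Finally I would compute $\Phi_1\circ\Phi_0$. Fix $W\in\Fine(\{W_x\}_{x\in X})$ with a decomposition as in Lemma \ref{nicefine} whose length-$\geq 2$ subword-pieces are $W_{r_j}\equiv(W_{x_{r_j}}\upharpoonright I_j)^{\delta_j}$ for $0\le j\le s$; then $\Phi_0(\beth(W))=\phi_0(W)=\beth(V)$ where $V:=(U_{x_{r_0}}\upharpoonright\varpropto(I_0,\iota_{x_{r_0}}))^{\delta_0}\circledcirc\cdots\circledcirc(U_{x_{r_s}}\upharpoonright\varpropto(I_s,\iota_{x_{r_s}}))^{\delta_s}\in\Fine(\{U_x\}_{x\in X})$. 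Using that $\phi_1$ is a homomorphism and evaluating it on each factor (a subword of some $U_x$, up to inversion) by the definition, Lemma \ref{almostidentified} identifies the resulting inner interval as $\varpropto(I_j,\dom\iota_{x_{r_j}})$, which by Lemma \ref{prettyclose} differs from $I_j$ only by finite end-pieces; hence by Observation \ref{finitedeletions} the $j$-th factor maps to $[[W_{r_j}]]$ (when $I_j$ is finite, both that factor and $W_{r_j}$ are finite words, so both map to $[[E]]$). Therefore $\phi_1(V)=[[W_{r_0}]]\cdots[[W_{r_s}]]=\beth(W)$, the last equality because the length-$1$ pieces of the decomposition are finite words and so lie in the kernel of $\beth$. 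Thus $\Phi_1\circ\Phi_0$ is the identity of $\beth(\Fine(\{W_x\}_{x\in X}))$, and by the symmetric computation $\Phi_0\circ\Phi_1$ is the identity of $\beth(\Fine(\{U_x\}_{x\in X}))$. Hence $\Phi_0$ and $\Phi_1$ are mutually inverse bijective homomorphisms, i.e. isomorphisms with $\Phi_1=\Phi_0^{-1}$.
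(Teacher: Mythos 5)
Your proposal is correct and follows essentially the same overall plan as the paper: establish that $\phi_0$ and $\phi_1$ are homomorphisms via the Lemma~\ref{Eda}/Observation~\ref{howtomultuply} decomposition, note they kill finite words, descend through $\beth$, and then compute $\Phi_1\circ\Phi_0$ and $\Phi_0\circ\Phi_1$ using Lemma~\ref{almostidentified}, Lemma~\ref{prettyclose}, and Observation~\ref{finitedeletions}. The genuinely different contribution in your write-up is the explicit argument that $\phi_0$ annihilates all of $\Fine(\{W_x\}_{x\in X})\cap\langle\langle *_{n\in\omega}G_n\rangle\rangle$, carried out by choosing $N$ with $\delta_N(W)\sim E$ and then inductively peeling off conjugates $h_i\circledcirc g_i\circledcirc h_i^{-1}$ with $h_i,g_i\in\Fine(\{W_x\}_{x\in X})$ and $g_i$ finite. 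The paper disposes of the descent by observing only that finite words lie in $\ker\phi_0$; since $\langle\langle *_{n\in\omega}G_n\rangle\rangle$ is the normal closure taken in $\Red(\{G_n\}_{n\in\omega})$ rather than inside the subgroup $\Fine(\{W_x\}_{x\in X})$, that observation alone gives only that the normal closure of the finite words \emph{within} $\Fine(\{W_x\}_{x\in X})$ lies in $\ker\phi_0$, and your factorization is exactly what shows the two normal closures coincide on $\Fine(\{W_x\}_{x\in X})$. So your proof buys a more careful justification of the descent step at the cost of a somewhat longer argument. Two small remarks: (i) you assert a biconditional (``$W\in\langle\langle *_{n\in\omega}G_n\rangle\rangle$ exactly when $\delta_N(W)\sim E$ for some $N$'') but prove and use only the forward implication, which is all that is needed, so the unproved converse should just be dropped; (ii) in the inductive step the claim that $W'$ has strictly fewer letters in $G_0\cup\cdots\cup G_N$ is correct but deserves a word, since $\Red(h_1V)$ may merge letters at the seam --- that merge occurs inside a single $G_n$, so it cannot increase the count, and deleting $g_1$ decreases it by one.
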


\begin{proof}  We first point out that for $W \in \Fine(\{W_x\}_{x \in X})$ we have $\phi_0(W^{-1}) = (\phi_0(W))^{-1}$.  To see this, we let

\begin{itemize}

\item $\overline{W} \equiv J_0J_1 \cdots J_k$ where $W \equiv (W \upharpoonright J_0) \cdots (W \upharpoonright J_k)$ is a decomposition as in Lemma \ref{nicefine};

\item $0 \leq r_0 < r_1 < \cdots < r_s \leq k$ are such that $\{r_0, \ldots, r_s\} = \{r \in \{0, 1, \ldots, k\}\mid |J_r| \geq 2\}$; and 

\item $W \upharpoonright J_{r_j} \equiv (W_{x_{r_j}} \upharpoonright I_j)^{\delta_j}$ with $I_j \subseteq \overline{W_{x_{r_j}}}$ an interval;
\end{itemize}

\noindent and as 

\begin{itemize}

\item $\overline{W^{-1}} \equiv J_k^{-1} \cdots J_1^{-1}J_0^{-1}$ where $W^{-1} \equiv (W^{-1} \upharpoonright J_k^{-1}) \cdots (W^{-1} \upharpoonright J_0^{-1})$ is a decomposition as in Lemma \ref{nicefine};

\item $0 \leq r_0 < r_1 < \cdots < r_s \leq k$ are such that $\{r_0, \ldots, r_s\} = \{r \in \{0, 1, \ldots, k\}\mid |J_r^{-1}| \geq 2\}$; and 

\item $W^{-1} \upharpoonright J_{r_j}^{-1} \equiv (W_{x_{r_j}} \upharpoonright I_j)^{-\delta_j}$ with $I_j \subseteq \overline{W_{x_{r_j}}}$ an interval;
\end{itemize}

\noindent we get

$$
\begin{array}{ll}
(\phi_0(W))^{-1} & = (\prod_{j = 0}^s[[(U_{x_{r_j}}\upharpoonright \alpha(I_j, \iota_{x_{r_j}}))^{\delta_j}]])^{-1}\\
& = \prod_{j = s}^0[[(U_{x_{r_j}}\upharpoonright \alpha(I_j, \iota_{x_{r_j}}))^{-\delta_j}]]\\
& = \phi_0(W^{-1}).
\end{array}
$$

Also, if $W \in \Fine(\{W_x\}_{x \in X})$ is finite (i.e. $|\overline{W}|< \infty$) then $\phi_0(W) = [[E]]$.  This is seen by writing $\overline{W} \equiv J_0J_1 \cdots J_k$ as in Lemma \ref{nicefine}, and letting $0 \leq r_0 < r_1 < \cdots < r_s \leq k$ be such that $\{r_0, \ldots, r_s\} = \{r \in \{0, 1, \ldots, j\}\mid |J_r| \geq 2\}$, and $W \upharpoonright J_{r_j} \equiv (W_{x_{r_j}} \upharpoonright I_j)^{\delta_j}$.  Since each $J_{r_j}$ is finite, we see that $(W_{x_{r_j}} \upharpoonright I_j)^{\delta_j}$ is a finite word, and so $(U_{x_{r_j}}\upharpoonright \alpha(I_j, \iota_{x_{r_j}}))^{\delta_j}$ will be finite by Lemma \ref{coilemma2}.  Then $\phi_0(W) = \prod_{j = 0}^s[[(U_{x_{r_j}}\upharpoonright \alpha(I_j, \iota_{x_{r_j}}))^{\delta_j}]] = [[E]]$.

Next we notice that when $W \in \Fine(\{W_x\}_{x \in X})$ and $\overline{W} \equiv E_aE_b$ we get $\phi_0(W) = \phi_0(W \upharpoonright E_a)\phi_0(W \upharpoonright E_b)$.  To see this, we let $J_0, J_1, \ldots, J_k \subseteq \overline{W}$, $0 \leq r_0 < \cdots < r_s \leq k$, etc. be as before.  The claim obviously holds if either $E_a$ or $E_b$ is empty, so without loss of generality we assume that $E_a \neq \emptyset \neq E_b$.  Let $0 \leq r \leq k$ be maximal such that $J_r \cap E_a \neq \emptyset$.  If $E_a \equiv J_0 \cdots J_r$ and $E_b \equiv J_{r + 1} \cdots J_k$ then these decompositions are as in Lemma \ref{nicefine} and it is easily seen that

$$
\begin{array}{ll}
\phi_0(W) & = \prod_{j = 0}^s[[(U_{x_{r_j}}\upharpoonright \alpha(I_j, \iota_{x_{r_j}}))^{\delta_j}]]\\
& = (\prod_{0\leq j \leq s, r_j \leq r}[[(U_{x_{r_j}}\upharpoonright \alpha(I_j, \iota_{x_{r_j}}))^{\delta_j}]])\\
& \cdot (\prod_{0\leq j \leq s, r_j > r}[[(U_{x_{r_j}}\upharpoonright \alpha(I_j, \iota_{x_{r_j}}))^{i\delta_j}]]))\\
& = \phi_0(W \upharpoonright E_a)\phi_0(W \upharpoonright E_b).
\end{array}
$$

\noindent Otherwise we have $J_r \cap E_a \neq \emptyset \neq J_r \cap E_b$, which implies in particular that $|J_r| \geq 2$ and therefore $r = r_{k'}$ for some $0 \leq k' \leq s$.  Let $J_r' = J_r \cap E_a$ and $J_r'' = J_r \cap E_b$.  If, for example, $|J_r'| = 1$ and $|J_r''| \geq 2$ then we let $f: J_{r_{k'}} \rightarrow I_{k'}^{\delta_{k'}}$ be an order isomorphism witnessing $W \upharpoonright J_{r_{k'}} \equiv (W_{x_{r_{k'}}} \upharpoonright I_{k'})^{\delta_{k'}}$.  We get 

$$[[(U_{x_{r_{k'}}}\upharpoonright \alpha(I_{k'}, \iota_{x_{r_{k'}}}))^{\delta_{k'}}]] = [[(U_{x_{r_{k'}}}\upharpoonright \alpha((f(J_r''))^{\delta_{k'}}, \iota_{x_{r_{k'}}}))^{\delta_{k'}}]]$$ 

\noindent because $|I_{k'} \setminus f(J_r'')^{\delta_{k'}}| = 1$ (by applying Lemmas \ref{coilemma} and \ref{coilemma2}).  Thus

$$
\begin{array}{ll}
\phi_0(W) & = \prod_{j = 0}^s[[(U_{x_{r_j}}\upharpoonright \alpha(I_j, \iota_{x_{r_j}}))^{\delta_j}]]\\
& = (\prod_{0\leq j \leq k'}[[(U_{x_{r_j}}\upharpoonright \alpha(I_k, \iota_{x_{r_j}}))^{\delta_j}]])\\
& \cdot (\prod_{k' < j \leq s}[[(U_{x_{r_j}}\upharpoonright \alpha(I_j, \iota_{x_{r_j}}))^{\delta_j}]]))\\
& = (\prod_{0\leq j <  k'}[[(U_{x_{r_j}}\upharpoonright \alpha(I_j, \iota_{x_{r_j}}))^{\delta_j}]])\\
& \cdot [[(U_{x_{r_{k'}}}\upharpoonright \alpha(I_{k'}, \iota_{x_{r_{k'}}}))^{\delta_{k'}}]])\\
& \cdot (\prod_{k' < j \leq s}[[(U_{x_{r_j}}\upharpoonright \alpha(I_j, \iota_{x_{r_j}}))^{\delta_j}]]))\\
& = \phi_0(W \upharpoonright E_a)\phi_0(W \upharpoonright E_b).
\end{array}
$$

\noindent If $|J_r'| \geq 2$ and $|J_r''| = 1$, or $|J_r'| = 1 = |J_r''|$, the proof is similar.  If $|J_r'|, |J_r''| \geq 2$ then

$$
\begin{array}{ll}
[[(U_{x_{r_{k'}}}\upharpoonright \alpha(I_{k'}, \iota_{x_{r_{k'}}}))^{\delta_{k'}}]] & = [[(U_{x_{r_{k'}}}\upharpoonright \alpha((f(J_r'))^{\delta_{k'}}, \iota_{x_{r_{k'}}}))^{\delta_{k'}}]]\\
& \cdot  [[(U_{x_{r_{k'}}}\upharpoonright \alpha((f(J_r''))^{\delta_{k'}}, \iota_{x_{r_{k'}}}))^{\delta_{k'}}]]
\end{array}
$$

\noindent by using Lemmas \ref{coilemma} and \ref{coilemma2} as was done in establishing (**) in Lemma \ref{welldefinedfuns} (if, say, $\delta_{k'} = -1$ then one has $I_{k'} \equiv (f(J_r''))^{-1}(f(J_r'))^{-1}$, writes $$\alpha(I_{k'}, \iota_{x_{r_{k'}}}) \equiv Q_0\alpha(f(J_r'), \iota_{x_{r_{k'}}})Q_1\alpha(f(J_r''), \iota_{x_{r_{k'}}})Q_2$$ where each of $Q_0$, $Q_1$, $Q_2$ is a finite interval, etc.)  Thus $\phi_0(W) = \phi_0(W \upharpoonright E_a)\phi_0(W \upharpoonright E_b)$, as required.

Now we let $W, W' \in \Fine(\{W_x\}_{x \in X})$ be given.  Suppose we can write, as in Lemma \ref{Eda}, $W \equiv W_0W_aW_1$ and $W' \equiv W_0'W_bW_1'$ so that $|\overline{W_a}| = 1 = |\overline{W_b}|$, $\Let(W_a)= \{g\}$,  $\Let(W_b) = \{g'\}$, $\{g, g'\} \subseteq G_n$ and $gg' = g'' \neq 1$ in $G_n$ and further that

\begin{enumerate}

\item $W \equiv W_0W_aW_1$;

\item $W' \equiv W_0'W_bW_1'$;

\item $W_0' \equiv (W_1)^{-1}$; and

\item $W_0g''W_1'$ is reduced.
\end{enumerate}

\noindent Then we have

$$
\begin{array}{ll}
\phi_0(W \circledcirc W') & = \phi_0(W_0g''W_1')\\
& = \phi_0(W_0)\phi_0(g'')\phi_0(W_1')\\
& = \phi_0(W_0)\phi_0(W_1')\\
& = \phi_0(W_0)\phi_0(W_1)(\phi_0(W_1))^{-1}\phi_0(W_1')\\
& = \phi_0(W_0)\phi_0(W_a)\phi_0(W_1)\phi_0(W_0')\phi_0(W_b)\phi_0(W_1')\\
& = \phi_0(W)\phi_0(W').
\end{array}
$$

\noindent In the case where $W \circledcirc W' \equiv W_0W_1'$, the $W_a$ and $W_b$ are not necessary and the proof is even easier.

Thus we have established that $\phi_0$ is a homomorphism, and the symmetric argument shows that $\phi_1$ is also a homomorphism.  Moreover all finite words are in the kernel of $\phi_0$, and also that of $\phi_1$, and so the homomorphisms descend to homomorphisms 

\begin{center}
$\Phi_0: \beth(\Fine(\{W_x\}_{x \in X})) \rightarrow \beth(\Fine(\{U_x\}_{x \in X}))$
\end{center}

\noindent and

\begin{center}
$\Phi_1:  \beth(\Fine(\{U_x\}_{x \in X})) \rightarrow \beth(\Fine(\{W_x\}_{x \in X}))$.
\end{center}

It remains to see that $\Phi_0$ and $\Phi_1$ are isomorphisms such that $\Phi_0^{-1} = \Phi_1$.  By applying Lemma \ref{nicefine} we can write any element of $\beth(\Fine(\{W_x\}_{x \in X}))$ as a product $[[W_0]]\cdots[[W_s]]$ where $W_k \in \Sub(\{W_x\}_{x\in X} \cup \{W_x^{-1}\}_{x \in X})$ for each $0 \leq j \leq s$.  For each $0 \leq j \leq s$ we therefore select $x_j \in X$, $\delta_j \in \{1, -1\}$, and interval $I_j \subseteq \overline{W_{x_j}}$ so that $W_j \equiv (W_{x_j} \upharpoonright I_j)^{\delta_j}$.  Then

$$
\begin{array}{ll}
\Phi_1 \circ \Phi_0(\prod_{j = 0}^s[[W_j]]) & = \prod_{j = 0}^s\Phi_1([[(U_{x_j}\upharpoonright \alpha (I_j, \iota_{x_j}))^{\delta_j}]])\\
& = \prod_{j = 0}^s (\Phi_1([[U_{x_j}\upharpoonright \alpha (I_j, \iota_{x_j}))]]))^{\delta_j}\\
& = \prod_{j = 0}^s[[W_{x_j}\upharpoonright \alpha(\alpha (I_j, \iota_{x_j})), \iota_{x_j}^{-1})]]^{\delta_j}\\
& = \prod_{j = 0}^s[[W_{x_j}\upharpoonright I_j]]^{i_j}\\
& = \prod_{j = 0}^s[[W_j]]
\end{array}
$$

\noindent where the fourth equality uses the fact that $I_j \equiv I\alpha(\alpha (I_j, \iota_{x_j})), \iota_{x_j}^{-1})I'$ where $I$ and $I'$ are finite intervals (Lemma \ref{prettyclose}).  Thus $\Phi_1 \circ \Phi_0$ is the identity map, and by the same argument we get $\Phi_0 \circ \Phi_1$ as the identity map.  The theorem is proved.
\end{proof}

Armed with this theorem, it is clear that for producing an isomorphism between $\mathcal{A}(\{G_n\}_{n \in \omega})$ and $\mathcal{A}(\{K_n\}_{n \in \omega})$ it would be sufficient to find a coherent collection $\{\coi(W_x, \iota_x, U_x)\}_{x \in X}$ of coi triples from $\Red(\{G_n\}_{n \in \omega})$ to $\Red(\{K_n\}_{n \in \omega})$ such that $\beth(\Fine(\{W_x\}_{x \in X})) = \mathcal{A}(\{G_n\}_{n \in \omega})$ and $\beth(\Fine(\{U_x\}_{x \in X})) = \mathcal{A}(\{K_n\}_{n \in \omega})$.  We do this by induction, using a back-and-forth argument which ensures that the isomorphism is complete, and not only defined on a proper subgroup of the domain or of the codomain.
\end{section}

\begin{section}{Some straightforward coi extensions}\label{uncomplicated}

Our task is now to give lemmas which will allow us to create the sufficiently large coherent coi collection in order to prove the main theorem.

\begin{lemma}\label{findsomerepresentative}  Suppose that $\{G_n\}_{n \in \omega}$ and $\{K_n\}_{n \in \omega}$ are sequences of nontrivial groups and that $\{\coi(W_x, \iota_x, U_x)\}_{x\in X}$ is a coherent collection of coi triples from $\Red(\{G_n\}_{n \in \omega})$ to $\Red(\{K_n\}_{n \in \omega})$.

\begin{enumerate}
\item  If $W \in \Fine(\{W_x\}_{x\in X})$ then there exists a $U \in \Red(\{K_n\}_{n \in \omega})$ and coi $\iota$ from $W$ to $U$ such that $\{\coi(W_x, \iota_x, U_x)\}_{x\in X} \cup \{\coi(W, \iota, U)\}$ is coherent.  Moreover if $W$ is nonempty the domain and range of $\iota$ can be made to be nonempty.

\item If $U \in \Fine(\{U_x\}_{x \in X})$ then there exists a $W \in \Red(\{G_n\}_{n \in \omega})$ and coi $\iota'$ from $W$ to $U$ such that $\{\coi(W_x, \iota_x, U_x)\}_{x\in X} \cup \{\coi(W, \iota', U)\}$ is coherent, with $\iota'$ having nonempty domain and range if $U$ is nonempty.

\end{enumerate}

\end{lemma}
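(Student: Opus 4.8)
The plan is to establish part (1); part (2) then follows at once by applying (1) to the coherent collection $\{\coi(U_x, \iota_x^{-1}, W_x)\}_{x \in X}$ of coi triples from $\Red(\{K_n\}_{n \in \omega})$ to $\Red(\{G_n\}_{n \in \omega})$. For part (1), I would first invoke Lemma \ref{nicefine} to write $W \equiv W_0 W_1 \cdots W_k$ where, for each $j$, either $W_j$ is an infinite word of the form $(W_{x_j} \upharpoonright I_j)^{\delta_j}$ for some $x_j \in X$, interval $I_j \subseteq \overline{W_{x_j}}$ and $\delta_j \in \{-1, 1\}$, or $W_j$ is a nonempty finite word (a single letter, or a finite subword of some $W_{x_j}$ or $W_{x_j}^{-1}$). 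The strategy is to attach a coi triple to each $W_j$ in turn, enlarging the collection one piece at a time so that the new triples are also coherent with one another, and then to splice the pieces together on the codomain side.

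For the single-piece step, applied to a coherent collection $\mathcal{T}$ containing $\{\coi(W_x, \iota_x, U_x)\}_{x \in X}$ together with one piece $W^{\ast}$, there are two cases. If $W^{\ast} \equiv (W_{x^{\ast}} \upharpoonright I^{\ast})^{\delta^{\ast}}$ is infinite, set $U^{\ast} := (U_{x^{\ast}} \upharpoonright \varpropto(I^{\ast}, \iota_{x^{\ast}}))^{\delta^{\ast}}$ and let $\iota^{\ast}$ be the coi from $\overline{W^{\ast}}$ to $\overline{U^{\ast}}$ got by restricting $\iota_{x^{\ast}}$ to $I^{\ast}$ (still close, by Lemma \ref{basiccloseproperties}(iv)) and conjugating by $\delta^{\ast}$; since $I^{\ast}$ is infinite, $\iota^{\ast}$ has infinite domain and range. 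One checks that for every interval $I \subseteq \overline{W^{\ast}}$ one has $U^{\ast} \upharpoonright \varpropto(I, \iota^{\ast}) \equiv (U_{x^{\ast}} \upharpoonright \varpropto(I^{\delta^{\ast}}, \iota_{x^{\ast}}))^{\delta^{\ast}}$, so each instance of the coherence conditions involving the new triple is literally an instance of a coherence condition holding in $\mathcal{T}$ between $(W_{x^{\ast}}, \iota_{x^{\ast}}, U_{x^{\ast}})$ and another triple; hence $\mathcal{T} \cup \{\coi(W^{\ast}, \iota^{\ast}, U^{\ast})\}$ is coherent. If instead $W^{\ast}$ is a nonempty finite word, let $U^{\ast}$ be a single nonidentity letter of $K_0$ (this is where nontriviality of the $K_n$ is used) and let $\iota^{\ast}$ be a partial map with one-element domain and range; here every $\varpropto$-image occurring in the coherence conditions is finite by Lemma \ref{coilemma2}, so each such condition reads $[[E]] = [[E]]$, and again $\mathcal{T} \cup \{\coi(W^{\ast}, \iota^{\ast}, U^{\ast})\}$ is coherent. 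Applying this step $k+1$ times, starting from $\{\coi(W_x, \iota_x, U_x)\}_{x \in X}$, produces reduced words $U_0, \ldots, U_k$ and cois $\iota_j$ from $\overline{W_j}$ to $\overline{U_j}$, each with nonempty domain and range, so that $\{\coi(W_x, \iota_x, U_x)\}_{x \in X} \cup \{\coi(W_j, \iota_j, U_j)\}_{0 \leq j \leq k}$ is coherent.

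For the splicing step I would put $U := U_0 h_1 U_1 h_2 \cdots h_k U_k$, where $h_1, \ldots, h_k$ are ``dummy'' letters, $h_i$ chosen to lie in a group $K_{n_i}$ different from the groups of the letters of $U_{i-1}$ and $U_i$ adjacent to it (and from those of $h_{i-1}$, $h_{i+1}$ when relevant); a short argument shows $U$ is then reduced, and, since the $h_i$ cannot cancel or merge with anything, the order type of $\overline{U}$ is exactly $\overline{U_0} \{h_1\} \overline{U_1} \cdots \{h_k\} \overline{U_k}$. Then $\iota := \bigsqcup_{j=0}^{k} \iota_j$ is, by Lemma \ref{basiccloseproperties}(iii) (whose hypothesis on the index set is automatic, that set being finite), a coi from $\overline{W}$ to $\overline{U}$, with nonempty domain and range whenever $W$ is nonempty. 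To verify coherence of $\{\coi(W_x, \iota_x, U_x)\}_{x \in X} \cup \{\coi(W, \iota, U)\}$, consider a match $W \upharpoonright I_0 \equiv (W_{x_1} \upharpoonright I_1)^{\delta}$: split $I_0$ along the pieces $\overline{W_j}$ and split $I_1$ accordingly; by Lemma \ref{coilemma} the interval $\varpropto(I_0, \iota)$ decomposes as the concatenation of the intervals $\varpropto(I_0 \cap \overline{W_j}, \iota_j)$ separated by finite intervals (comprising the dummy letters and the finite pieces that $\varpropto$ produces), and $\varpropto(I_1, \iota_{x_1})$ decomposes similarly into the corresponding intervals $\varpropto(\cdot, \iota_{x_1})$ separated by finite intervals; as finite subwords die under $[[\cdot]]$ (Observation \ref{finitedeletions}), the required identity follows term by term from coherence of the collection built in the previous step. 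The reverse coherence conditions, for matches $U \upharpoonright I_2 \equiv (U_{x_3} \upharpoonright I_3)^{\epsilon}$, are handled symmetrically, each dummy letter again contributing only a $[[\cdot]]$-trivial factor. As $W$ nonempty forces the presence of at least one piece, and each $\iota_j$ is nonempty, this also gives the ``moreover'' clause; part (2) follows by the symmetry noted above.

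The step I expect to be the main obstacle is the bookkeeping in the splicing: one must check carefully that, when $\varpropto(I_0, \iota)$ and $\varpropto(I_1, \iota_{x_1})$ are broken up along the pieces, the intervening ``gap'' intervals really are finite, so that they vanish in $\mathcal{A}(\{K_n\}_{n \in \omega})$, and that the dummy letters really can be inserted without destroying reducedness of $U$. Both points are routine given Lemmas \ref{prettyclose}, \ref{coilemma}, \ref{coilemma2} and Observation \ref{finitedeletions}, but keeping the indices and the finite corrections straight is where the actual work lies.
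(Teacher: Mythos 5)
Your proof is correct and follows essentially the same route as the paper's: decompose $W$ via Lemma \ref{nicefine}, transport each infinite piece $(W_{x_j} \upharpoonright I_j)^{\delta_j}$ to $(U_{x_j} \upharpoonright \varpropto(I_j, \iota_{x_j}))^{\delta_j}$, insert dummy letters to preserve reducedness, glue the cois, and verify coherence piece by piece using Lemmas \ref{coilemma}, \ref{coilemma2} and Observation \ref{finitedeletions}. The only cosmetic deviations are that you assign a single-letter triple to each finite piece and always insert dummies, whereas the paper simply discards the finite pieces and inserts dummies only between consecutive infinite pieces that would otherwise share a group at the junction; you also build the coherent collection one piece at a time and then cite coherence of that intermediate collection, while the paper verifies coherence of the final collection directly from the original one.
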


\begin{proof}  We prove claim (1), and claim (2) follows in precisely the same way as the first claim, given the symmetric nature of the definition of coi triples.

If $W$ is empty then we let $U$ and $\iota$ be empty, the check that the larger coi collection is coherent is easy.  Else we write 

\begin{center}

$W \equiv W_0 \cdots W_k$

\end{center}

\noindent so that for each $0 \leq r \leq k$ we have $W_r$ a subword of an element in $\{W_x\}_{x\in X} \cup \{W_x^{-1}\}_{x \in X}$, or $|\overline{W_r}| = 1$,  as in Lemma \ref{nicefine}.  Let $J = \{r_0, \ldots, r_s\}$ be the set of those elements in $0 \leq r \leq k$ such that $W_{r_j}$ is infinite, with $r_0 < r_1 < \cdots < r_s$.   Select $x_j \in X$ and $\delta_j \in \{-1, 1\}$ and intervals $I_j \subseteq \overline{W_{x_j}}$ such that $W_{r_j} \equiv (W_{x_j} \upharpoonright I_j)^{\delta_j}$.  For each $r_j \in J$ we let $U_j' \equiv (U_{x_j} \upharpoonright \alpha(I_j, \iota_{x_j}))^{\delta_j}$.  By Lemma \ref{coilemma2} we know that for each $0 \leq j \leq s$ the word $U_j'$ is infinite.

If $J = \emptyset$ then the word $W$ is finite, and we select $h \in K_0\setminus \{1\}$, let $U \equiv h$, select $\lambda \in \overline{W}$ and let $\iota: \{\lambda\} \rightarrow \overline{U}$ be the unique bijection.  Again, the check that the extended coi collection is coherent is straightforward.

Now we may assume that $J \neq \emptyset$.  For each $0 \leq j < s$, if $\max\overline{U_j'}$ and $\min\overline{U_{j+1}'}$ both exist and $d(U_j'(\max\overline{U_j'})) = d(U_{j+1}(\min\overline{U_{j+1}'}))$ then we select $$h_j \in K_{d(U_j'\max\overline{U_j'}) + 1} \setminus \{1\}$$ and let $U_j \equiv U_j'h_j$, otherwise we let $U_j \equiv U_j'$.  By Lemma \ref{Eda} it is clear that the word $U_0$ is reduced, that $U_0U_1$ is reduced, that $U_0U_1U_2$ is reduced, etc., and that $U_0\cdots U_s$ is reduced.  We let $U \equiv U_0 \cdots U_s$.

We define a coi $\iota$ from $W$ to $U$.  For each $0 \leq j \leq s$ we let $f_j: \overline{W_{r_j}} \rightarrow I_j^{\delta_j}$ be the order isomorphism witnessing $W_{r_j} \equiv (W_{x_j} \upharpoonright I_j)^{\delta_j}$ and $k_j: \overline{U_j'} \rightarrow \alpha(I_j, \iota_{x_j})^{\delta_j}$ be the order isomorphism witnessing $U_j' \equiv (U_{x_j} \upharpoonright \alpha(I_j, \iota_{x_j}))^{\delta_j}$.  Let the domain of $\iota$ be given by $\dom(\iota) = \bigcup_{j=0}^s f_j^{-1}(\dom(\iota_{x_j}) \cap I_j)$, the range of $\iota$ be given by $\bigcup_{j=0}^s k_j^{-1} \circ \iota_{x_j}(\dom(\iota_{x_j}) \cap I_j)$, and $\iota$ be the unique function which extends $ k_j^{-1} \circ \iota_{x_j} \circ f_j$ for each $0 \leq j \leq s$.  By Lemma \ref{basiccloseproperties} (iii) we have that the domain (respectively range) of $\iota$ is close in $\overline{W}$ (resp. $\overline{U}$).

We check that $\{\coi(W_x, \iota_x, U_x)\}_{x\in X} \cup \{\coi(W, \iota, U)\}$ is coherent.  Suppose that $y\in X$ and intervals $I \subseteq \overline{W}$ and $I' \subseteq \overline{W_y}$ and $\delta \in \{-1, 1\}$ are such that $W \upharpoonright I  \equiv (W_y \upharpoonright I')^{\delta}$.  Let $L \subseteq J$ denote the set of those $r_j$ such that $\overline{W_{r_j}} \cap I \neq \emptyset$.  For each $r_j \in L \cap J$ we have $W\upharpoonright (\overline{W_{r_j}} \cap I) \equiv (W_{x_j}\upharpoonright \Lambda_j^*)^{\delta_j}$ for the obvious choice of interval $\Lambda_j^* \subseteq I_j \subseteq \overline{W_{x_j}}$.  Thus $(W_{x_j}\upharpoonright \Lambda_j^*)^{\delta \cdot \delta_j} \equiv W_y \upharpoonright I_j'$ for the obvious choice of interval $I_j' \subseteq I'$.  By the coherence of $\{\coi(W_x, \iota_x, U_x)\}_{x\in X}$ we therefore have

$$
\begin{array}{ll}
[[U \upharpoonright \alpha(I, \iota)]] & = \prod_{r_j\in L} [[U \upharpoonright \alpha(\overline{W_{r_j}} \cap I, \iota)]]\\
& = \prod_{r_j \in L} [[U_{x_j} \upharpoonright \alpha(\Lambda_j^*, \iota_{x_j})]]^{\delta_j}\\
& = \prod_{r_j \in L^{\epsilon}} [[U_{y}\upharpoonright \alpha(I_j', \iota_y)]]^{\delta}\\
& = [[(U_y \upharpoonright \alpha(I', \iota_y))^{\delta}]]
\end{array}
$$

\noindent where the first and last equalities hold by Lemma \ref{coilemma2}.

If we select intervals $I, I' \subseteq \overline{W}$ and $\delta \in \{-1, 1\}$ such that $W \upharpoonright I \equiv (W \upharpoonright I')^{\delta}$ then a similar strategy of finitely decomposing $I$ and $I'$ is employed to show $[[U\upharpoonright \alpha(I, \iota)]] = [[(U\upharpoonright \alpha(I', \iota))^{\delta}]]$.

The check that if $U \upharpoonright Q \equiv (U_z \upharpoonright Q')^{\epsilon}$, where $z\in X$, then the appropriate elements of $\mathcal{A}(\{G_n\}_{n \in \omega})$ are equal is similar to that above, with slight modifications (although $U \notin \Fine(\{U_x\}_{x\in X})$ is possible, the word $U$ is a finite concatenation of words in $\Fine(\{U_x\}_{x \in X})$ and words of length $1$).  Similarly modifications apply for intervals $Q, Q' \subseteq \overline{U}$, and the proof is complete.
\end{proof}

\begin{lemma}\label{makeitsmaller}  Suppose that $\{G_n\}_{n \in \omega}$ and $\{K_n\}_{n \in \omega}$ are sequences of nontrivial groups and that $\{\coi(W_x, \iota_x, U_x)\}_{x\in X}$ be a coherent collection of coi triples from $\Red(\{G_n\}_{n \in \omega})$ to $\Red(\{K_n\}_{n \in \omega})$.

\begin{enumerate}

\item  If $y \in X$ and $N\in \omega$ then there exists $U \in \Red(\{K_n\}_{n \in \omega})$ with $d(U) > N$ and coi $\iota$ from $W_y$ to $U$ such that $\{\coi(W_x, \iota_x, U_x)\}_{x\in X} \cup \{\coi(W_y, \iota, U)\}$ is coherent.  Also, the domain and codomain of $\iota$ may be chosen to be nonempty provided $\iota_y$  satisfies this property.

\item If $z \in X$ and $M \in \omega$ then there exists $W \in \Red(\{G_n\}_{n \in \omega})$ and coi $\iota''$ from $W$ to $U_z$ such that $d(W) > M$ and $\{\coi(W_x, \iota_x, U_x)\}_{x\in X} \cup \{\coi(W_z, \iota'', U)\}$ is coherent, with the domain of $\iota''$ being nonempty provided $\iota_z$ is.  

\end{enumerate}
\end{lemma}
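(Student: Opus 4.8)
I would prove (1); part (2) follows by the symmetry of the notion of coherence, exactly as in Lemma \ref{findsomerepresentative}. The strategy is to leave $\overline{W_y}$ and $\iota_y$ essentially untouched and only surgically replace the finitely many ``low'' letters of $U_y$. Since $U_y$ is a word, $B=\{i\in\overline{U_y}:d(U_y(i))\le N\}$ is finite, say $B=\{b_1<\cdots<b_m\}$, and $\overline{U_y}\setminus B$ is the disjoint union of finitely many intervals $V_0,\dots,V_m$ (with $V_{\ell-1}$ lying between $b_{\ell-1}$ and $b_\ell$), each of which, as a subword of $U_y$, is reduced with $d(V_\ell)>N$ when nonempty. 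Form $U$ from $U_y$ by keeping $\overline U=\overline{U_y}$ and all values off $B$, and at each $b_\ell$ putting a nontrivial letter $h_\ell$ of a group $K_{n_\ell}$ with $n_\ell>N$ large enough to exceed the indices of the $U_y$-letters neighbouring $b_\ell$, and with the $n_\ell$ pairwise distinct. Then no two neighbouring letters of $U$ share a group, and a short argument with reduction schemes (Proposition \ref{reductionscheme}) rules out a nonempty $\sim E$ subword, so $U$ is reduced, while $d(U)>N$. (If $B=\emptyset$ take $U=U_y$; if $W_y$ is finite, equivalently $U_y$ is, the conclusion is trivial since every relevant $\varpropto$-interval is then finite --- these cases are dispatched as in the easy cases of Lemma \ref{findsomerepresentative}.) As $\overline U=\overline{U_y}$ as ordered sets, $\iota:=\iota_y$ is a coi from $\overline{W_y}$ to $\overline U$, with the same domain and codomain as $\iota_y$, hence nonempty when $\iota_y$ is.

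It remains to check that $\{\coi(W_x,\iota_x,U_x)\}_{x\in X}\cup\{\coi(W_y,\iota,U)\}$ is coherent; since the original collection is coherent and already contains $\coi(W_y,\iota_y,U_y)$, only the conditions involving the new triple need attention. The fact doing all the work is that $\beth$ is unchanged when a word is altered in finitely many single letters, a length-one subword lying in $*_{n\in\omega}K_n$; in particular $[[U\upharpoonright J]]=[[U_y\upharpoonright J]]$ for every interval $J\subseteq\overline U=\overline{U_y}$. Consequently, for any match $W_y\upharpoonright I\equiv(W_x\upharpoonright I')^\delta$ (or $W_y\upharpoonright I\equiv(W_y\upharpoonright I')^\delta$) one has $[[U\upharpoonright\varpropto(I,\iota)]]=[[U_y\upharpoonright\varpropto(I,\iota_y)]]$, and the required identity drops straight out of the corresponding condition for the original collection.

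The genuinely new point --- and the main obstacle --- is the ``backward'' condition, a match $U\upharpoonright Q\equiv(U_x\upharpoonright Q')^\epsilon$ (with $x\in X$, possibly $x=y$) or a self-match $U\upharpoonright Q\equiv(U\upharpoonright Q')^\epsilon$, where $U$ is a genuinely modified word, so such a match need not be a match for $U_y$. Here the plan is to cut $Q$ at the finitely many positions $b_\ell$ it contains, so that $U\upharpoonright Q\equiv(U_y\upharpoonright Q^{(0)})\,h_{i_1}\,(U_y\upharpoonright Q^{(1)})\cdots h_{i_a}\,(U_y\upharpoonright Q^{(a)})$ (on each $Q^{(\ell)}$ the word $U$ agrees with $U_y$); this is a reduced word equal to the reduced word $(U_x\upharpoonright Q')^\epsilon$, so transporting the witnessing order-isomorphism yields a compatible decomposition of $Q'$ and, for each $\ell$, an honest match $U_y\upharpoonright Q^{(\ell)}\equiv(U_x\upharpoonright R^{(\ell)})^\epsilon$ among the \emph{original} data, the letters $h_{i_\ell}$ matching single letters of $U_x$. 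In the self-match case one cuts both $Q$ and $Q'$ and passes to the common refinement under the isomorphism, again extracting honest matches among the original data on the refined pieces. Applying the original coherence piecewise and then reassembling via Lemmas \ref{coilemma} and \ref{coilemma2} --- the single-letter pieces $\{b_{i_\ell}\}$ and their $\varpropto$-images being finite, hence invisible to $[[\cdot]]$ --- gives the required equality, with the product order coming out correctly for both signs of $\epsilon$. This is the same decompose--apply--reassemble scheme used in Proposition \ref{welldefinedfuns} and Lemma \ref{findsomerepresentative}, and the remaining work is purely bookkeeping, routine given the $\varpropto$-calculus of Section \ref{Howtoisomorphize}.
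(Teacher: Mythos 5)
Your proof follows the paper's overall strategy — modify $U_y$ at the finitely many low positions so that $d(\cdot)>N$, keep essentially the same coi, and verify coherence by noting that deleting or inserting finitely many finite subwords is invisible to $[[\cdot]]$ — but the actual surgery is different. The paper deletes every maximal block of $U_y$-letters at levels $\le N+2$ and inserts a single letter $h$ of the \emph{same} group $K_{N+1}$ each time; this changes the underlying ordered set, so $\iota$ must be cut down from $\iota_y$, but the level $N+1$ of $h$ is then completely vacant elsewhere in $U$, and the reducedness of $U$ drops out of Lemma \ref{Eda} by a one-block-at-a-time induction. You instead keep $\overline U=\overline{U_y}$, replace only the individual letters at levels $\le N$ by $h_\ell\in K_{n_\ell}$ with pairwise distinct $n_\ell>N$ chosen to avoid the neighbouring levels, and can therefore take $\iota=\iota_y$ on the nose — a real simplification. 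The cost is that your $h_\ell$ may share a group with surviving $U_y$-letters, so the reducedness of $U$ is the delicate point, not a throwaway. It does hold under your conditions (the right hypothesis is $d(U(j))\ne n_\ell$ for $j$ adjacent to $b_\ell$, which ``$n_\ell$ exceeds the level of every $U_y$-neighbour'' plus pairwise distinctness gives): induct on $|I\cap B|$, take $b$ the leftmost replaced position in $I$, show $U\upharpoonright I_0$ and $h_\ell(U\upharpoonright I_1)$ are reduced, and then Lemma \ref{Eda} forces a letter of level $n_\ell$ adjacent to $b$, a contradiction. That is about as much work as the paper's own verification, so it deserves more than the one-line gesture. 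The backward coherence check (cut $Q$ at the $b_\ell$'s, apply the original coherence to the honest $U_y$-vs-$U_x$ matches on the pieces, reassemble via Lemmas \ref{coilemma} and \ref{coilemma2}) is the same computation as the paper's. One small slip: when $U_y\equiv E$ your rule returns $U\equiv E$, for which $d(U)$ is undefined; in that degenerate case take $U$ to be a single nontrivial letter of $K_{N+1}$, as the paper does.
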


\begin{proof}  Once again we will prove (1) and the proof of (2) follows the symmetric approach.  Assume the hypotheses.  Select $h \in K_{N+1} \setminus \{1\}$.  If $U_y$ is finite then so is $W_x$ by Lemma \ref{coilemma2}.  In this case we let $U \equiv h$, and if $W_y$ is nonempty we select $\lambda \in \overline{W_y}$ and let $\iota: \{\lambda\} \rightarrow \overline{U}$ be the unique function.  That $\{\coi(W_x, \iota_x, U_x)\}_{x\in X} \cup \{\coi(W_y, \iota, U)\}$ is coherent is straightforward.  Now we suppose that $U_y$ (and also $W_y$) is infinite.  If $i \in \overline{U_y}$ with $d(U_y(i)) \leq N + 2$ then there exists a maximal interval $i \in  I \subseteq \overline{U_y}$ such that $d(U_y(i')) \leq N+2$ for all $i' \in I$, and as $U_y$ is a word we know that $I$ is finite.  If $i \in \overline{U_y}$ with $d(U_y(i)) > N + 2$ then there exists a maximal interval $i \in I \subseteq \overline{U_y}$ such that $d(U_y(i')) > N + 2$ for all $i' \in I$ (one can argue this by Zorn's lemma).  Thus we may write $U_y$ as a finite concatenation

\begin{center}

$U_y \equiv U_0' \cdots U_k'$

\end{center}

\noindent where for each $0 \leq r \leq k$ we have either all letters in $U_r'$ having $d(\cdot) \leq N + 2$ or all letters having $d(\cdot) > N + 2$, and moreover if all letters of $U_r'$ have $d(\cdot) \leq N + 2$ then all letters in $U_{r + 1}'$ have $d(\cdot) > N + 2$ and vice versa.  Now we let

\[
U_r = \left\{
\begin{array}{ll}
h
                                            & \text{if } d(U_r') \leq N+2, \\
U_r'                                        & \text{if }d(U_r') > N + 2.
\end{array}
\right.
\]

\noindent and $U \equiv U_0U_1\cdots U_k$.  By Lemma \ref{Eda} it is clear that $U$ is reduced, for if $U_1, U_3, \ldots, U_{2p+1}$ are all $\equiv h$ then we have $U_0$ reduced, $U_0U_1 \equiv U_0h$ is reduced, $U_0U_1U_2 \equiv U_0hU_2$ is reduced, etc., and in case $U_0, U_2, \ldots$ are $\equiv h$ then the reasoning is similar.  

Let $J$ be the set of those $0 \leq r \leq k$ such that $U_r \not\equiv h$, so that $J$ consists either of all even or all odd natural numbers which are at least $0$ and at most $k$.  For each $r \in J$ we have $U_r$ as a subword of the original word $U_y$, so $\overline{U_r}$ is an interval in $\overline{U_y}$.  Let $\mathbb{J} \subseteq J$ be those $0 \leq r \leq k$ such that $U_r$ is infinite.  The word $U$ is obtained from $U_y$ by deleting finitely many finite words and inserting finitely many finite words, and as $U_y$ was infinite we know $\mathbb{J} \neq \emptyset$.  As $U_r$ is an infinite word for each $r \in \mathbb{J}$, by Lemma \ref{basiccloseproperties} (i) we know $\Close(\im(\iota_y) \cap \overline{U_r}, \overline{U_r})$ for such an $r$, and by Lemma \ref{basiccloseproperties} (iii) we have $\Close(\bigcup_{r \in J} (\im(\iota_y) \cap \overline{U_r}), \overline{U})$.  Now we define a coi $\iota$ from $W_y$ to $U$ by letting $\im(\iota) = \bigcup_{r \in J} \im(\iota_y) \cap \overline{U_r}$, $\dom(\iota) = \iota_y^{-1}(\bigcup_{r \in J} \im(\iota_y) \cap \overline{U_r})$ and $\iota(i) = \iota_y(i)$ for all $i \in \dom(\iota)$.  We have that $\dom(\iota_y) \setminus \dom(\iota)$ is finite by Lemma \ref{coilemma2}, so by Lemma \ref{basiccloseproperties} (i) we have $\Close(\dom(\iota), \overline{W_y})$.  Also, it is clear that $\iota$ is a bijection.

So, $\iota$ is a coi from $W_y$ to $U$, and $d(U) > N$.  If $I$ is an interval in $\overline{W_y}$, then one can obtain $U_y \upharpoonright \alpha(I, \iota_y)$ from $U \upharpoonright \alpha(I, \iota)$ by deleting finitely many finite subwords and inserting finitely many finite subwords, so $[[U_y \upharpoonright \alpha(I, \iota_y)]] = [[U \upharpoonright \alpha(I, \iota)]]$.  Thus, if we let $z \in X$ and intervals $I \subseteq \overline{W_y}$ and $I' \subseteq \overline{W_z}$ and $\delta \in \{-1, 1\}$ are such that $W_y \upharpoonright I \equiv (W_z \upharpoonright I')^{\delta}$, we have 

\begin{center}
$[[U \upharpoonright \alpha(I, \iota)]] = [[U_y \upharpoonright \alpha(I, \iota_y)]]$

$= [[(U_z \upharpoonright \alpha(I', \iota_z))^{\delta}]]$

\end{center}

\noindent where the second equality comes from the fact that the original coi collection $\{\coi(W_x, \iota_x, U_x)\}_{x\in X}$ is coherent.  The comparable proof shows that if $I, I'$ are intervals in $\overline{W_y}$ and $\delta \in \{-1, 1\}$ with $W_y \upharpoonright I \equiv (W_y \upharpoonright I')^{\delta}$ then we have

\begin{center}
$[[U \upharpoonright \alpha(I, \iota)]] = [[(U \upharpoonright \alpha(I', \iota))^{\delta}]]$.
\end{center}

Now we suppose that $z\in X$ and $I$ is an interval in $\overline{U}$, $I'$ is an interval in $\overline{U_z}$, and $\epsilon \in \{-1, 1\}$ are such that $U \upharpoonright I \equiv (U_z \upharpoonright I')^{\epsilon}$.  Let $L = \{0 \leq r \leq k: \overline{U_r} \cap I \neq \emptyset\}$ and notice that

$$
\begin{array}{ll}
[[W_y \upharpoonright \alpha(I, \iota^{-1})]] & = \prod_{r \in L} [[W_y \upharpoonright \alpha(\overline{U_r} \cap I, \iota^{-1})]]\\
& = \prod_{r \in L \cap J} [[W_y \upharpoonright \alpha(\overline{U_r} \cap I, \iota^{-1})]]\\
& = \prod_{r \in L \cap J} [[W_y \upharpoonright \alpha(\overline{U_r} \cap I, \iota_y^{-1})]]\\
& = \prod_{r \in (L \cap J)^{\epsilon}} [[(W_z \upharpoonright \alpha(K_r,\iota_z^{-1}))^{\epsilon}]]\\
& = [[(W_z \upharpoonright \alpha(I', \iota_z^{-1}))^{\epsilon}]]

\end{array}
$$

\noindent where $K_r$ is the subinterval in $I' \subseteq \overline{W_z}$ obtained from $\overline{U_r} \cap I$ via the order isomorphism witnessing $U \upharpoonright I \equiv (U_z \upharpoonright I')^{\epsilon}$.  The first equality is clear, the second follows from Lemma \ref{coilemma2}, and the third is because $\iota_y^{-1}$ and $\iota^{-1}$ coincide on $\overline{U_r} \cap I$ for each $r \in L \cap J$, the fourth is from the fact that $\{\coi(W_x, \iota_x, U_x)\}_{x\in X}$ is coherent, the fifth is by Lemma \ref{coilemma2}.  The similar claim when $I, I'$ are intervals in $\overline{U}$ and $\epsilon \in \{-1, 1\}$ are such that $U\upharpoonright I \equiv (U \upharpoonright I')^{\epsilon}$ follows via similar reasoning.  Thus $\{\coi(W_x, \iota_x, U_x)\}_{x\in X} \cup \{\coi(W_y, \iota, U)\}$ is coherent, and also in this case $\dom(\iota)$ is nonempty (as $\overline{W_y}$ is infinite).
\end{proof}

The following lemmas will be useful later.

\begin{lemma}\label{newword1}  Suppose that $\{H_n\}_{n \in \omega}$ is a sequence of nontrivial groups.  Let $\Lambda$ be a totally ordered set, $f: \Lambda \rightarrow \omega$ a function, and $V \in \W(\{H_n\}_{n \in \omega})$.  There are only finitely many order embeddings $p: \Lambda \rightarrow \overline{V}$ such that $p(\Lambda)$ is an interval and for all $\lambda \in \Lambda$ the equality $f(\lambda) = d(V(p(\lambda)))$ holds.
\end{lemma}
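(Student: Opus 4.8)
The plan is to prove that any order embedding $p$ meeting the hypotheses is completely determined by the single value $p(\lambda_0)$ for an arbitrarily fixed $\lambda_0 \in \Lambda$, and then to observe that there are only finitely many possibilities for that value. If $\Lambda = \emptyset$ the only such $p$ is the empty function, so assume $\Lambda \neq \emptyset$ and fix $\lambda_0 \in \Lambda$. For $n \in \omega$ set $T_n = \{i \in \overline{V} : V(i) \in H_n \setminus \{1\}\}$; since $V$ is a word, each $T_n$ is finite, and $T_n$ is precisely the set of $i \in \overline{V}$ with $d(V(i)) = n$. If $p$ satisfies the hypotheses then $d(V(p(\lambda)))$ is defined for every $\lambda$, so $V(p(\lambda)) \neq 1$ and $p(\lambda) \in T_{f(\lambda)}$; in particular $p(\lambda_0)$ lies in the finite set $T_{f(\lambda_0)}$. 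Hence it suffices to show: if $p, p'$ both satisfy the hypotheses and $p(\lambda_0) = p'(\lambda_0)$, then $p = p'$.

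To prove this, write $i_0 = p(\lambda_0) = p'(\lambda_0)$ and fix $\lambda \in \Lambda$; by symmetry we may assume $\lambda > \lambda_0$ (if $\lambda = \lambda_0$ the equality $p(\lambda) = p'(\lambda)$ is immediate, and the case $\lambda < \lambda_0$ is handled by running the same argument in the reversed order). Put $a = p(\lambda)$ and $b = p'(\lambda)$ and assume without loss of generality $a \leq b$. Since $p$ is an order isomorphism of $\Lambda$ onto the interval $p(\Lambda) \subseteq \overline{V}$, it carries the interval $[\lambda_0, \lambda] \subseteq \Lambda$ onto the interval of $p(\Lambda)$ with endpoints $i_0$ and $a$, which by convexity of $p(\Lambda)$ in $\overline{V}$ equals the interval $[i_0, a] \subseteq \overline{V}$; likewise $p'$ carries $[\lambda_0, \lambda]$ onto $[i_0, b] \subseteq \overline{V}$. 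Therefore $q := p' \circ (p \upharpoonright [\lambda_0, \lambda])^{-1}$ is an order isomorphism from $[i_0, a]$ onto $[i_0, b]$ fixing $i_0$, and since $p$ and $p'$ realize the same function $f$ we have $d(V(q(i))) = d(V(i))$ for every $i \in [i_0, a]$.

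Because both $q$ and $q^{-1}$ preserve $d$-values, for each $n$ the map $q$ restricts to a bijection between $[i_0, a] \cap T_n$ and $[i_0, b] \cap T_n$; since $a \leq b$ the former is a subset of the latter, and both are finite, so they coincide. Applying this with $n = d(V(b))$ (which is defined, as $V(b) \neq 1$) yields $b \in [i_0, b] \cap T_n = [i_0, a] \cap T_n$, whence $b \leq a$ and so $a = b$. Thus $p(\lambda) = p'(\lambda)$ for all $\lambda \in \Lambda$, i.e. $p = p'$, which completes the argument.

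The one genuinely load-bearing hypothesis, and the step I expect to need the most care, is that $p(\Lambda)$ be an interval: it is exactly what converts "the $p$-image of an interval of $\Lambda$" into an honest interval of $\overline{V}$, so that the finiteness of the sets $[i_0, a] \cap T_n$ can be brought to bear. Without it the conclusion fails outright — if, say, $\overline{V}$ carries two letters from each $H_n$ arranged as $H_0, H_0, H_1, H_1, \dots$ and $f(m) = m$ on $\Lambda = \omega$, there are $2^{\aleph_0}$ order embeddings $p$ with $d(V(p(m))) = f(m)$ for all $m$, none of which has interval image. Beyond this point, the only things demanding attention are the degenerate cases ($\Lambda$ empty, $\lambda_0$ extremal in $\Lambda$, $T_{f(\lambda_0)}$ empty), all of which are immediate, and the verification that the interval of $p(\Lambda)$ between $i_0$ and $a$ really is the interval of $\overline V$ between them, which is just convexity.
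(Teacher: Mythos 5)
Your proof is correct and follows essentially the same strategy as the paper: fix $\lambda_0 \in \Lambda$, observe that $p(\lambda_0)$ must lie in the finite set of positions of $\overline V$ with the right $d$-value, and then show $p$ is determined by $p(\lambda_0)$. The only difference is in how determination is argued — the paper reconstructs $p(\lambda)$ explicitly as the $k$-th occurrence of the relevant $d$-value past $p(\lambda_0)$, whereas you run a cardinality/bijection argument via the auxiliary map $q = p' \circ (p\upharpoonright[\lambda_0,\lambda])^{-1}$; both hinge on exactly the two load-bearing facts you identified (finiteness of each $T_n$ and $p(\Lambda)$ being an interval), so this is a cosmetic rather than substantive variation.
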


\begin{proof}  If $\Lambda$ is empty then the claim is obvious.  Thus we may assume $\Lambda \neq \emptyset$.  If there exists such an order embedding $p$, then we know that for each $n \in \omega$ the preimage $f^{-1}(\{n\})$ in $\Lambda$ is finite.  Thus suppose that $\Lambda \neq \emptyset$ and that such an order embedding exists.  Select $\lambda' \in \Lambda$.  We claim that if $p: \Lambda \rightarrow \overline{V}$ is an order embedding with $p(\Lambda)$ an interval in $\overline{V}$ and $f(\lambda) = d(V(p(\lambda)))$ for all $\lambda \in \Lambda$, then $p$ is totally determined by $p(\lambda')$.  To see this, we let $\lambda \in \Lambda$ be given.  Letting $n = d(V(p(\lambda))) = f(\lambda)$ we have that $f^{-1}(\{n\})$ is finite.  If $\lambda' \leq \lambda$ in $\Lambda$ then we let $\lambda_0 < \lambda_1 < \cdots < \lambda_k$ be the set of all those elements in $f^{-1}(\{n\})$ which are $\geq \lambda'$.  As $p(\Lambda)$ is an interval in $\overline{V}$, we know that $p(\lambda_0) \in \overline{V}$ is the minimal element greater than or equal to $p(\lambda')$ with $d(p(\lambda_0)) = n$, and $p(\lambda_1) \in \overline{V}$ is the minimal element above $p(\lambda_0)$ with $d(p(\lambda_1)) = n$, etc.  Thus if $\lambda = \lambda_i$, then we see that $p(\lambda)$ is uniquely determined by $p(\lambda')$, and the argument in case $\lambda < \lambda'$ is entirely analogous.

Now, since $\lambda'$ must be assigned to one of the finitely many elements $i \in \overline{V}$ such that $f(\lambda') = d(V(i))$, there are only finitely many order embeddings $p$ of $\Lambda$, as an interval, into $\overline{V}$ with $f(\lambda) = d(V(p(\lambda)))$ for all $\lambda \in \Lambda$.

\end{proof}

\begin{lemma}\label{newword2}  Let $\{H_n\}_{n \in \omega}$ be a collection of nontrivial groups.  Suppose that $\{V_x\}_{x \in X} \subseteq \Red(\{H_n\}_{n \in \omega})$, with $|X| < 2^{\aleph_0}$, $\Lambda$ is a totally ordered set and $f_0: \Lambda \rightarrow \omega$ is a function.  Let $f_1: \omega \rightarrow \Lambda$ be an injective function, $f_2: f_1(\omega) \rightarrow \{-1, 1\}$, and $f_3: f_1(\omega) \rightarrow \bigcup_{n \in \omega} H_n$ be such that $f_3(\lambda)$ is an element of infinite order in $H_{f_0(\lambda)}$.  Then there exists a function $q: f_1(\omega) \rightarrow \omega \setminus \{0\}$ such that there exists no $V \in \Sub(\{V_x\}_{x \in X} \cup \{V_x^{-1}\}_{x\in X})$ and order isomorphism $\iota: \Lambda \rightarrow \overline{V}$ such that

\begin{enumerate}[(a)]

\item $d \circ V \circ \iota = f_0$; and

\item $V(\iota(\lambda)) = f_3(\lambda)^{f_2(\lambda)q(\lambda)}$ for all $\lambda \in f_1(\omega)$.
\end{enumerate}
\end{lemma}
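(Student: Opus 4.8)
The plan is a cardinality (diagonalization) argument. I will show that the set of functions $q\colon f_1(\omega)\to\omega\setminus\{0\}$ that could possibly be ``spoiled'' by some pair $(V,\iota)$ satisfying (a)--(b) has cardinality strictly less than the set of all such functions, so a good $q$ exists; the mechanism making this work is that, for a \emph{fixed} pair $(V,\iota)$ satisfying (a), condition (b) determines $q$ uniquely, because each $f_3(\lambda)$ has infinite order. \emph{Step 1 (replace a pair by finitely-many-choices data).} Writing $V_x^{1}=V_x$, every $V\in\Sub(\{V_x\}_{x\in X}\cup\{V_x^{-1}\}_{x\in X})$ is, up to $\equiv$, of the form $V_x^{\delta}\upharpoonright I$ for some $x\in X$, some $\delta\in\{-1,1\}$, and some interval $I\subseteq\overline{V_x^{\delta}}$; since $V_x$ is reduced, so is $V_x^{\delta}$, and so $d\circ V$ is everywhere defined. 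If $\iota\colon\Lambda\to\overline{V}$ is an order isomorphism satisfying (a), then composing $\iota$ with the order isomorphism $\overline{V}\to I$ witnessing $V\equiv V_x^{\delta}\upharpoonright I$ yields an order embedding $p\colon\Lambda\to\overline{V_x^{\delta}}$ with $p(\Lambda)=I$ an interval, $V_x^{\delta}(p(\lambda))=V(\iota(\lambda))$, and hence $d(V_x^{\delta}(p(\lambda)))=f_0(\lambda)$ for every $\lambda\in\Lambda$. By Lemma \ref{newword1}, applied with $f=f_0$ and with $V_x^{\delta}$ in the role of $V$, for each of the countably-many pairs $(x,\delta)$ there are only finitely many such $p$. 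Hence the set $P$ of all triples $(x,\delta,p)$ arising this way satisfies $|P|\le|X|\cdot\aleph_0<2^{\aleph_0}$, using the hypothesis $|X|<2^{\aleph_0}$.

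\emph{Step 2 (each triple forces at most one $q$).} Fix $(x,\delta,p)\in P$. Because $f_3(\lambda)$ has infinite order in $H_{f_0(\lambda)}$, the map $m\mapsto f_3(\lambda)^{f_2(\lambda)m}$ is injective on the integers, so for each $\lambda\in f_1(\omega)$ there is at most one positive integer $m$ with $f_3(\lambda)^{f_2(\lambda)m}=V_x^{\delta}(p(\lambda))$. Consequently there is at most one function $q_{(x,\delta,p)}\colon f_1(\omega)\to\omega\setminus\{0\}$ with $V_x^{\delta}(p(\lambda))=f_3(\lambda)^{f_2(\lambda)q_{(x,\delta,p)}(\lambda)}$ for all $\lambda\in f_1(\omega)$ (and possibly none). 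Set $B=\{q_{(x,\delta,p)}\mid (x,\delta,p)\in P \text{ and } q_{(x,\delta,p)} \text{ exists}\}$, so that $|B|\le|P|<2^{\aleph_0}$.

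\emph{Step 3 (choose $q$ and verify).} Since $f_1$ is injective, $f_1(\omega)$ is countably infinite, so there are $\aleph_0^{\aleph_0}=2^{\aleph_0}$ functions $f_1(\omega)\to\omega\setminus\{0\}$; as $|B|<2^{\aleph_0}$ we may choose such a function $q$ with $q\notin B$. Suppose, for contradiction, that some $V\in\Sub(\{V_x\}_{x\in X}\cup\{V_x^{-1}\}_{x\in X})$ and order isomorphism $\iota\colon\Lambda\to\overline{V}$ satisfied (a) and (b) for this $q$. Then Step 1 produces $(x,\delta,p)\in P$ with $V_x^{\delta}(p(\lambda))=V(\iota(\lambda))=f_3(\lambda)^{f_2(\lambda)q(\lambda)}$ for all $\lambda\in f_1(\omega)$, and by Step 2 this forces $q=q_{(x,\delta,p)}\in B$, a contradiction. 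Hence $q$ is as required.

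I expect the only real care to be in Step 1: recognizing that hypothesis (a), together with the fact that each $V_x$ (hence each $V_x^{-1}$, hence each of their subwords) is reduced, places us precisely in the scope of Lemma \ref{newword1}, and that passing to inverses is harmless since $d(g)=d(g^{-1})$. Everything after that is pure cardinal arithmetic, with the infinite order of each $f_3(\lambda)$ doing the essential work of collapsing ``spoil the pair $(V,\iota)$'' to ``avoid the single function $q_{(x,\delta,p)}$.''
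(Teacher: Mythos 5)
Your proof is correct and takes essentially the same route as the paper: both use Lemma \ref{newword1} to bound the collection of relevant order embeddings by a set of cardinality $<2^{\aleph_0}$, exploit the infinite order of each $f_3(\lambda)$ to conclude that each such embedding rules out at most one candidate $q$, and then diagonalize. Your write-up is merely more explicit about the passage from a subword-plus-isomorphism pair $(V,\iota)$ to an embedding into some $V_x^{\pm 1}$, a step the paper leaves implicit.
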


\begin{proof}

We let $\{\iota_y\}_{y \in Y}$ be the collection of all order embeddings with domain $\Lambda$ and codomain an element $\overline{V_y} \in \{\overline{V_x}\}_{x \in X} \cup \{\overline{V_x}^{-1}\}_{x \in X}$ such that $f_0 = d \circ V_y \circ \iota_y$ and also $\iota_y(\Lambda)$ is an interval in $\overline{V_y}$.  We also assume that the indexing $Y$ does not include any duplicates, so $y_0 \neq y_1$ implies that $\iota_{y_0} \neq \iota_{y_1}$.  By Lemma \ref{newword1} we know that for a fixed $x\in X$ there can be only finitely many $\iota_y$ with codomain $\overline{V_x}$ or with codomain $\overline{V_x^{-1}}$.  Thus in particular we know that $|Y| < 2^{\aleph_0}$.  As the set of functions $q: f_1(\omega) \rightarrow \omega \setminus \{0\}$ is of cardinality $2^{\aleph_0}$, we select a $q$ such that $(\forall y \in Y)(\exists \lambda_y \in f_1(\omega))$  $V_y(\iota_y(\lambda_y)) \neq f_3(\lambda_y)^{f_2(\lambda_y)q(\lambda_y)}$.  It is clear that this $q$ satisfies the conclusion.
\end{proof}

\begin{lemma}\label{notdoneyet}  Let $\{H_n\}_{n \in \omega}$ be a collection of groups such that each $H_n$ has an element of infinite order.  If $\{V_x\}_{x \in X} \subseteq \Red(\{H_n\}_{n \in \omega})$ is a collection with $|X| < 2^{\aleph_0}$ then there exists a word $V' \in \Red(\{H_n\}_{n \in \omega}) \setminus \Fine(\{V_x\}_{x \in X})$.
\end{lemma}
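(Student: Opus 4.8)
The plan is to build a single reduced word $V'$ that ``diagonalizes'' against the entire collection $\{V_x\}_{x \in X}$ by controlling which exponents of a fixed infinite-order element appear at prescribed positions, and then invoke Lemma \ref{newword2} to guarantee that no subword of any $V_x$ or $V_x^{-1}$ can equal $V'$; finally one upgrades ``not a subword of anything in the family'' to ``not in $\Fine(\{V_x\}_{x \in X})$'' via the structural description of $\Fine$ given in Lemma \ref{nicefine}.

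First I would fix, for each $n \in \omega$, an element $h_n \in H_n$ of infinite order (possible by hypothesis). I would then choose a convenient countable totally ordered set $\Lambda$ together with data $f_0 : \Lambda \to \omega$, an injection $f_1 : \omega \to \Lambda$, and $f_2, f_3$ on $f_1(\omega)$, arranged so that the word $V'$ we are trying to produce will have domain (order-isomorphic to) $\Lambda$, will use only letters of the form $h_n^{\pm k}$, will be reduced for every choice of positive exponents at the marked positions $f_1(\omega)$, and will be \emph{infinite} (so that it has a genuine chance of lying outside $\Fine(\{V_x\}_{x \in X})$; note a finite word is always in every $\Fine$). A clean choice is $\Lambda \equiv \omega$ itself with $f_0$ chosen so that consecutive letters lie in distinct $H_n$ (e.g.\ $f_0$ alternating or strictly monotone in an appropriate pattern), $f_1$ the identity, $f_2 \equiv 1$, and $f_3(n) = h_{f_0(n)}$; then any assignment of positive exponents produces a reduced infinite word since neighbouring letters live in different groups. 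Apply Lemma \ref{newword2} to the family $\{V_x\}_{x \in X}$ (legitimate since $|X| < 2^{\aleph_0}$) to obtain $q : f_1(\omega) \to \omega \setminus \{0\}$ such that no element of $\Sub(\{V_x\}_{x \in X} \cup \{V_x^{-1}\}_{x\in X})$ is order-isomorphic to $\Lambda$ in a way realizing $d \circ V \circ \iota = f_0$ and $V(\iota(\lambda)) = f_3(\lambda)^{q(\lambda)}$. Define $V'$ to be the word on domain $\Lambda$ with $V'(\lambda) = f_3(\lambda)^{q(\lambda)}$ for $\lambda \in f_1(\omega)$ (and, if $\Lambda \neq f_1(\omega)$, filled in at the remaining positions by any choice of $h_n^{\pm 1}$ respecting $f_0$ and keeping neighbours in distinct groups); by construction $V'$ is reduced and infinite.

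It remains to argue $V' \notin \Fine(\{V_x\}_{x \in X})$. Suppose for contradiction it were; by Lemma \ref{nicefine}(2) we could write $V' \equiv V'_0 V'_1 \cdots V'_k$ where each $V'_j$ is either a (nonempty) element of $\Sub(\{V_x\}_{x \in X} \cup \{V_x^{-1}\}_{x \in X})$ or has $|\overline{V'_j}| = 1$. Since $V'$ is infinite, at least one block $V'_j$ must be infinite, hence of the first type: $V'_j$ is an infinite subword of some $V_x^{\pm 1}$. But $V'_j$ is a subword of the reduced word $V'$, so it is itself reduced, and it contains infinitely many of the marked positions from $f_1(\omega)$; in particular $\overline{V'_j}$ is an interval of $\Lambda$ containing a tail (cofinally many marked positions). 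The order-isomorphism realizing $V'_j$ as a subword of $V_x^{\pm 1}$ then provides an order embedding of an infinite interval of $\Lambda$ into some $\overline{V_x^{\pm 1}}$ on which $d\circ V\circ\iota$ agrees with $f_0$ restricted to that interval and the letters are exactly $f_3(\lambda)^{q(\lambda)}$ — contradicting the defining property of $q$ from Lemma \ref{newword2}, after the harmless bookkeeping of passing between $\Lambda$ and an infinite initial-or-tail subinterval (here one can streamline by having chosen $\Lambda$ from the outset so that \emph{every} infinite subinterval of $\Lambda$ is order-isomorphic to $\Lambda$, e.g.\ $\Lambda \equiv \mathbb{Z}$ or a suitable homogeneous order, adjusting $f_0, f_1$ accordingly). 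Hence no such decomposition exists and $V' \notin \Fine(\{V_x\}_{x \in X})$.

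The main obstacle I anticipate is the gluing/bookkeeping step: Lemma \ref{newword2} is phrased for an order isomorphism $\iota : \Lambda \to \overline{V}$ with $\overline{V}$ the \emph{whole} domain of a subword, whereas a block $V'_j$ coming out of Lemma \ref{nicefine} is only an infinite \emph{sub}interval of $\overline{V'} \equiv \Lambda$, not all of it. The cleanest fix is to engineer $\Lambda$ (and the placement $f_1(\omega)$ of marked positions, cofinal and coinitial in a strong sense) so that every infinite subinterval is order-isomorphic to $\Lambda$ and still carries a copy of the same $(f_0, f_1, f_2, f_3)$ pattern; then the contradiction with $q$ is immediate. Getting these homogeneity requirements exactly right — so that $V'$ is simultaneously reduced, infinite, and ``self-similar on all infinite subintervals'' — is the only delicate point; everything else is a routine application of Lemmas \ref{nicefine}, \ref{newword1}, and \ref{newword2}.
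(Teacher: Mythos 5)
Your outline matches the paper's up to the point you yourself flag as delicate, and it is precisely there that the proposal has a genuine gap. You set up a single application of Lemma \ref{newword2} (with, say, $\Lambda \equiv \omega$, $f_0 = \mathrm{id}$, $f_1 = \mathrm{id}$) and then note, correctly, that the decomposition from Lemma \ref{nicefine} only guarantees that a \emph{tail} $V'_k$ of $V'$ sits inside some $V_x^{\pm 1}$. Lemma \ref{newword2} blocks order isomorphisms $\iota : \Lambda \to \overline{V}$ with $d\circ V\circ\iota = f_0$; if the tail starts at position $N>0$, its degree sequence begins at $f_0(N)$, not $f_0(0)$, so the blocked condition simply does not apply. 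Your proposed repair — engineering $\Lambda$ so every infinite subinterval is order-isomorphic to $\Lambda$ and ``carries a copy of the same $(f_0,f_1,f_2,f_3)$ pattern'' — cannot be made to work: an order isomorphism from a tail onto $\Lambda$ would have to transport $f_0$ to itself, forcing $f_0$ to repeat its values along the tail, which is incompatible with $V'$ being a word (each $G_n$ may appear only finitely often). Homogeneity of the bare order type $\Lambda$ is not enough; it is the pair $(\Lambda, f_0)$ that would need to be self-similar, and it cannot be.

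The paper's fix is to apply Lemma \ref{newword2} \emph{countably many times}, once for each of a family of tails. One partitions $\omega = \bigsqcup_{m\in\omega} Z_m$ into infinitely many infinite pieces and, for each $m$, takes $\Lambda = \omega\cap[\min Z_m,\infty)$ with $f_{1,m}:\omega\to Z_m$ a bijection; Lemma \ref{newword2} then yields $q_m : Z_m \to \omega\setminus\{0\}$ blocking that particular tail. Because the $Z_m$ are pairwise disjoint, the functions $q_m$ assemble into a single exponent assignment on all of $\omega$, defining $V' \equiv h_0^{q_{z(0)}(0)} h_1^{q_{z(1)}(1)}\cdots$. Now given any cofinite tail $\overline{V'_k}$ from the \ref{nicefine} decomposition, one can pick $m$ with $[\min Z_m,\infty)\subseteq\overline{V'_k}$ and the restriction $V'_k\upharpoonright[\min Z_m,\infty)$ contradicts the choice of $q_m$. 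You should replace the homogeneity idea with this partition-and-diagonalize-against-every-tail scheme.
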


\begin{proof}  Assume the hypotheses.  For each $n \in \omega$ select $h_n \in H_n$ of infinite order.  Write $\omega = \bigsqcup_{m \in \omega} Z_m$ where each $Z_m$ is infinite and define $z: \omega \rightarrow \omega$ by $n \in Z_{z(n)}$.  Let $f_{0, m}: \omega \cap [\min Z_m, \infty) \rightarrow \omega$ be the identity map.  For each $m \in \omega$ we let $f_{1, m}: \omega \rightarrow Z_m$ be a bijection.  For each $m \in \omega$ we let $f_{2, m}$ be the constant function with domain $Z_m = f_{1, m}(\omega)$ and output $1$.  For each $m \in \omega$ we let $f_{3, m}$ be the function with domain $Z_m = f_{1, m}(\omega)$ given by $f_{3, m}(n) = h_n$.

By Lemma \ref{newword2} (letting $\Lambda = \omega \cap  [\min Z_m, \infty)$ for each $m$) we select for each $m \in \omega$ a function $q_m: f_{1, m}(\omega) \rightarrow \omega \setminus \{0\}$ such that there exists no $V \in \Sub(\{V_x\}_{x \in X} \cup \{V_x^{-1}\}_{x\in X})$ and order isomorphism $\iota: \omega \cap [\min Z_m, \infty) \rightarrow \overline{V}$ such that

\begin{enumerate}[(a)]

\item $d \circ V \circ \iota = f_{0, m}$ on $\omega \cap [\min Z_m, \infty)$; and

\item $V(\iota(n)) = f_{3, m}(n)^{f_{2, m}(n)q_m(n)}$ for all $n \in f_{1, m}(\omega)$.
\end{enumerate}

\noindent Now we let $$V' \equiv h_0^{q_{z(0)}(0)}h_1^{q_{z(1)}(1)} \cdots .$$  It is easy to see that $V'$ is reduced (each finite initial word $h_0^{q_{z(0)}(0)}h_1^{q_{z(1)}(1)} \cdots h_k^{q_{z(k)}(k)}$ is reduced, and if $V'$ were not reduced there would exist a nonempty reduction scheme on $V'$ by Proposition \ref{reductionscheme}, but then a finite initial word would have a nonempty reduction scheme, contradiction).

Were it the case that $V' \in \Fine(\{V_x\}_{x \in X})$, we know by Lemma \ref{nicefine} that we can decompose $V'$ as a finite concatenation

\begin{center}
$V' \equiv V_0'V_1' \cdots V_k'$
\end{center}

\noindent and for each $0 \leq j \leq k$ at least one of the following holds:

\begin{itemize}
\item $V_j' \in \Sub(\{V_x\}_{x \in X} \cup \{V_x^{-1}\}_{x \in X})$ is nonempty;

\item $|\overline{V_j'}| = 1$ with $\Let(V_j') = \{h\}$ such that $g \in H_{n_j}\setminus \{1\}$ and $h$ is a product of elements in $H_{n_j} \cap \Let(\{V_x\}_{x \in X} \cup \{(V_x')^{-1}\}_{x \in X})$.

\end{itemize}

\noindent As $\overline{V'} = \omega$ we know $\overline{V_k'}$ is a nonempty terminal interval in $\omega$ (and therefore of order type $\omega$), and $V_0'\cdots V_{k-1}'$ is a finite word.  Therefore $V_k' \in \Sub(\{V_x\}_{x \in X} \cup \{V_x^{-1}\}_{x \in X})$, say $V_k' \in \Sub(V_{\underline{x}}^{\epsilon})$, with $\epsilon \in \{-1,1\}$.  As $\overline{V_k'}$ is a nonempty terminal interval in $\omega$ we can select some $m \in \omega$ so that $[\min Z_m, \infty)\cap \omega \subseteq \overline{V_k'}$.  As $V_k' \in \Sub(V_{\underline{x}}^{\epsilon})$, we know that $V = V_k' \upharpoonright ([\min Z_m, \infty) \cap \omega) \in \Sub(V_{\underline{x}}^{\epsilon})$, which contradicts the conditions for our selection of $g_m$.

\end{proof}

\begin{proposition}\label{omegatypeconcat}  Suppose that $\{G_n\}_{n \in \omega}$ and $\{K_n\}_{n \in \omega}$ are sequences of groups, each group having an element of infinite order.  Suppose that $\{\coi(W_x, \iota_x, U_x)\}_{x \in X}$ is a coherent collection of coi from $\Red(\{G_n\}_{n \in \omega})$ to $\Red(\{K_n\}_{n \in \omega})$ with $|X|<2^{\aleph_0}$.

\begin{enumerate}

\item If $W \equiv \prod_{m \in \omega} W_m$ with $W_m \not\equiv E$ and $W_m \in\Fine(\{W_x\}_{x \in X})$ for each $m \in \omega$, and $W \notin \Fine(\{W_x\}_{x \in X})$ then there exists $U \in \Red(\{K_n\}_{n \in \omega})$ and coi $\iota$ from $W$ to $U$ with $\{\coi(W_x, \iota_x, U_x)\}_{x \in X} \cup \{\coi(W, \iota, U)\}$ coherent.

\item If $U \equiv \prod_{n \in \omega} U_n$ with $U_m \not\equiv E$ and $U_m \in \Fine(\{U_x\}_{x \in X})$ for each $m \in \omega$, and $U \notin \Fine(\{U_x\}_{x \in X})$ then there exists $W \in \Red(\{W_x\}_{x \in X})$ and coi $\iota$ from $W$ to $U$ with $\{\coi(W_x, \iota_x, U_x)\}_{x \in X} \cup \{\coi(W, \iota, U)\}$ coherent.

\end{enumerate}

\end{proposition}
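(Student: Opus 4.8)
Since the notion of a coherent collection of coi triples is symmetric in the two families, claim (2) for $\{\coi(W_x,\iota_x,U_x)\}_{x\in X}$ is nothing but claim (1) for the coherent collection $\{\coi(U_x,\iota_x^{-1},W_x)\}_{x\in X}$ of coi triples from $\Red(\{K_n\}_{n\in\omega})$ to $\Red(\{G_n\}_{n\in\omega})$, so it suffices to prove (1). First I would absorb all the blocks into the collection: since each $W_m$ is a nonempty element of $\Fine(\{W_x\}_{x\in X})$, Lemma \ref{findsomerepresentative}(1) lets one adjoin coi triples $\coi(W_m,\iota_m,U_m')$ one at a time, each with $U_m'$, $\dom(\iota_m)$ and $\im(\iota_m)$ nonempty, and by Lemma \ref{ascendingchaincoi} the enlarged collection $\mathcal{T}:=\{\coi(W_x,\iota_x,U_x)\}_{x\in X}\cup\{\coi(W_m,\iota_m,U_m')\}_{m\in\omega}$ is coherent, with index set still of cardinality $<2^{\aleph_0}$. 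Everything below is carried out relative to $\mathcal{T}$.

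Next I would construct $U$ and a coi $\iota$ from $W$ to $U$. For each $m$ fix an element $k_m$ of infinite order in some $K_{n_m}$, with $n_0<n_1<\cdots$ and $n_m$ larger than every value $d(\cdot)$ occurring in $U_m'$ or $U_{m+1}'$; for positive integers $e_m$ still to be chosen put $h_m\equiv k_m^{e_m}$ and $U\equiv U_0'h_0U_1'h_1U_2'h_2\cdots$. Whatever the $e_m$, the word $U$ is reduced, by the argument via Lemma \ref{Eda} used in the proofs of Lemmas \ref{findsomerepresentative} and \ref{makeitsmaller}. Writing $T_a$ for the terminal subword $U_a'h_aU_{a+1}'h_{a+1}\cdots$ of $U$, the labelling $d\circ T_a$ does not depend on the $e_m$, so by Lemma \ref{newword1} each word in $\{U_x\}_{x\in X}\cup\{U_m'\}_{m\in\omega}$ admits, for each $a$, only finitely many interval-embeddings of $\overline{T_a}$ respecting $d(\cdot)$; running the diagonalization of Lemma \ref{newword2}, organized over $a$ as in the proof of Lemma \ref{notdoneyet}, one chooses the $e_m$ so that for every $a\in\omega$ the word $T_a$ is a subword of none of the $U_x$ $(x\in X)$, the $U_m'$ $(m\in\omega)$, or their inverses. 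Every cofinal interval of $\overline{U}$ carries some $T_a$ as a subword; combining this with the previous sentence and with a pigeonhole argument on the positions of the separating letters $h_m$ (whose $d$-values tend to infinity), one sees that no cofinal subword of $U$ is $\equiv$ a subword of any member of $\mathcal{T}$ other than $U$ itself, and that if a cofinal subword of $U$ is $\equiv(U\upharpoonright Q')^{\epsilon}$ for a subword $U\upharpoonright Q'$ of $U$ then $Q'$ is cofinal too. Finally set $\dom(\iota)=\bigsqcup_m\dom(\iota_m)$, $\im(\iota)=\bigsqcup_m\im(\iota_m)$ (the $h_m$ being omitted) and $\iota\upharpoonright\dom(\iota_m)=\iota_m$; Lemma \ref{basiccloseproperties}(iii) gives closeness of $\dom(\iota)$ in $\overline{W}$ and of $\im(\iota)$ in $\overline{U}$.

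The parallel facts on the $W$-side require no genericity, and this is the only place the hypothesis $W\notin\Fine(\{W_x\}_{x\in X})$ is used: if $m_0\in\omega$, $J\subseteq\overline{W_{m_0}}$ is a terminal interval, and $(W_{m_0}\upharpoonright J)W_{m_0+1}W_{m_0+2}\cdots$ lay in $\Fine(\{W_x\}_{x\in X})$, then $W$, being the concatenation of $W_0\cdots W_{m_0-1}$, then $W_{m_0}\upharpoonright(\overline{W_{m_0}}\setminus J)$, then $(W_{m_0}\upharpoonright J)W_{m_0+1}W_{m_0+2}\cdots$, would be a product of members of the subgroup $\Fine(\{W_x\}_{x\in X})$ and hence would lie there, contrary to hypothesis. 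So no cofinal subword of $W$ lies in $\Fine(\{W_x\}_{x\in X})$, and in particular none is a subword of any $W_x$ $(x\in X)$ or of any $W_m$.

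It remains to verify that $\{\coi(W_x,\iota_x,U_x)\}_{x\in X}\cup\{\coi(W,\iota,U)\}$ is coherent. Matches between two members of $\mathcal{T}$ are covered by the coherence of $\mathcal{T}$. If $W\upharpoonright I$ is finite, or $I\subseteq\overline{W}$ meets only finitely many blocks $\overline{W_m}$ --- and dually for subwords of $U$ --- then $\varpropto$-images of finite intervals are finite (Lemma \ref{coilemma2}) and one matches the two sides of the coherence identity block by block, using the coherence of $\mathcal{T}$ on each block and reassembling with Lemmas \ref{coilemma}, \ref{coilemma2} and Observation \ref{finitedeletions} (under $[[\cdot]]$ the separating letters $h_m$ and the finitely many interpolating intervals disappear); this is the mutual-refinement bookkeeping of the proof of Proposition \ref{welldefinedfuns}. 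The remaining matches involve a cofinal subword of $W$ or of $U$, and by the two previous paragraphs such a subword occurs in no old member of the collection, so the match is a self-match $W\upharpoonright I\equiv(W\upharpoonright I')^{\delta}$ or $U\upharpoonright Q\equiv(U\upharpoonright Q')^{\epsilon}$, with the other interval necessarily cofinal as well. For two cofinal intervals of $\overline{W}$, or of $\overline{U}$, one is a terminal subinterval of the other; cancelling the common terminal factor by Lemma \ref{Eda} --- together with the strictly increasing $d(h_m)$ in the case of $U$ --- forces the two intervals to agree off a finite initial segment and forces $\delta=\epsilon=1$, so the two sides of the coherence identity differ by a finite word and are equal in $\mathcal{A}(\{K_n\}_{n\in\omega})$, respectively $\mathcal{A}(\{G_n\}_{n\in\omega})$. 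The crux is exactly this cofinal case, where a naive block-by-block reassembly would produce an ill-defined infinite product in the quotient; excluding the unwanted coincidences of cofinal subwords of the newly built word $U$ (the corresponding exclusion for $W$ being free from $W\notin\Fine(\{W_x\}_{x\in X})$) is precisely why the counting Lemmas \ref{newword1}, \ref{newword2} and the hypotheses $|X|<2^{\aleph_0}$ and ``each group has an element of infinite order'' are needed.
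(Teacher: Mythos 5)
Your overall plan is the same as the paper's: adjoin coi triples $\coi(W_m,\iota_m,U_m')$ for the blocks, interleave generic powers of fixed infinite-order elements as separators, diagonalize with Lemmas \ref{newword1} and \ref{newword2} to exclude unwanted coincidences along terminal subwords of $U$, set $\iota=\bigcup_m\iota_m$, and verify coherence by the dichotomy between subwords meeting finitely many blocks and cofinal subwords.

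There is, however, a genuine gap at the start: you adjoin the $\coi(W_m,\iota_m,U_m')$ using Lemma \ref{findsomerepresentative} alone and never invoke Lemma \ref{makeitsmaller}, so you have no control over the $d$-values of the blocks $U_m'$. This matters twice over. First, your prescription ``$n_m$ larger than every value $d(\cdot)$ occurring in $U_m'$ or $U_{m+1}'$'' is ill-posed whenever $U_m'$ is infinite: a reduced word in $\Red(\{K_n\}_{n\in\omega})$ uses each $K_n$ only finitely often, so an infinite reduced word has unbounded $d$-values, and there is no such $n_m$. (And $U_m'$ will indeed be infinite whenever $W_m$ is.) Second, and more fundamentally, even if the separators were placed correctly, the concatenation $U\equiv U_0'h_0U_1'h_1\cdots$ need not be a word: if, say, every $U_m'$ begins with a letter in $K_0$, then $U$ has infinitely many positions with letters in $K_0$ and therefore lies outside $\W(\{K_n\}_{n\in\omega})$. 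The paper handles both problems simultaneously by applying Lemmas \ref{findsomerepresentative} \emph{and} \ref{makeitsmaller} to produce $U_m'$ with $d(U_m')>m+1$, and then choosing separators $k_m^{r_m}$ with $d(k_m)=m$, sitting strictly below the surrounding blocks; this makes $U$ a word, keeps it reduced, and still lets the $\omega\setminus\{0\}$-power diagonalization of Lemma \ref{newword2} go through. Once that step is inserted your outline --- the genericity argument, the role of $W\notin\Fine(\{W_x\}_{x\in X})$ in ruling out coincidences involving terminal subwords of $W$, and the case analysis in the coherence check --- tracks the paper's proof.
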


\begin{proof}  As usual, we will prove only (1) since (2) is analogous.  Select, by Lemmas \ref{findsomerepresentative} and \ref{makeitsmaller} a nonempty word $U_0' \in \Red(\{K_n\}_{n \in \omega})$ with $d(U_0') > 1$ and a coi $\iota_0$ from $W_0$ to $U_0'$ such that $\{\coi(W_x, \iota_x, U_x)\}_{x \in X} \cup \{\coi(W_0, \iota_0, U_0')\}$ is coherent and $\iota_0$ has nonempty domain.  Suppose that we have already selected nonempty words $U_0', \ldots, U_k' \in \Red(\{K_n\}_{n \in \omega})$ with $d(U_j) > j + 1$ and $\iota_0, \ldots, \iota_k$ with $\iota_j$ a coi from $W_j$ to $U_j'$ with $\iota_j$ having nonempty domain, and $\{\coi(W_x, \iota_x, U_x)\}_{x \in X} \cup \{\coi(W_0, \iota_0, U_0'), \ldots, \coi(W_k, \iota_k, U_k')\}$ coherent.  Then by Lemmas \ref{findsomerepresentative} and \ref{makeitsmaller} select a nonempty word $U_{k+1}' \in \Red(\{K_n\}_{n \in \omega})$ with $d(U_{k+1}') >  k + 1$ and a coi $\iota_{k+1}$ from $W_{k+1}$ to $U_{k+1}'$ with $\iota_{k+1}$ having nonempty domain and $\{\coi(W_x, \iota_x, U_x)\}_{x \in X} \cup \{\coi(W_0, \iota_0, U_0'), \ldots, \coi(W_{k+1}, \iota_{k+1}, U_{k+1}')\}$ coherent.  By Lemma \ref{ascendingchaincoi} we have that $\{\coi(W_x, \iota_x, U_x)\}_{x \in X} \cup \{\coi(W_n, \iota_n, U_n')\}_{n \in \omega}$ is coherent.  For each $n \in \omega$ select an element $k_n \in H_n$ of infinite order.

As in Lemma \ref{notdoneyet}, we write $\omega = \bigsqcup_{m \in \omega} Z_m$ where each $Z_m$ is infinite and define $z: \omega \rightarrow \omega$ by $n \in Z_{z(n)}$.  Consider a word of form

\begin{center}

$U \equiv U_0'k_0^{q_{z(0)}(0)} U_1'k_1^{q_{z(1)}(1)}U_2k_2^{q_{z(2)}(2)} \cdots$

\end{center}

\noindent where $\{q_m\}_{m\in \omega}$ is a sequence of functions with $q_m: Z_m \rightarrow \omega \setminus\{0\}$.  It is easy to see that $U_0'k_0^{q_{z(0)}(0)}$ is reduced, since $d(U_0') > 0$, and similarly that $U_0'k_0^{q_{z(0)}(0)} U_1'$, $U_0'k_0^{q_{z(0)}(0)} U_1'k_1^{q_{z(1)}(1)}$, etc. are all reduced.  Therefore by applying Proposition \ref{reductionscheme} it is clear that such a $U$ is reduced (if $U$ is not reduced, there is a nonempty reduction scheme $\mathcal{S}$, from which we can find a nonempty reduction scheme $\mathcal{S}' \subseteq \mathcal{S}$ on an initial $U_0'k_0^{q_{z(0)}(0)}\cdots U_m'k_m^{q_{z(m)}(0)})$.  Let $\mathcal{F}$ denote the group of reduced words generated by $\{k_0, k_1, \ldots\}$.  Now by Lemma \ref{newword2} for each $m\in \omega$ we select the function $q_m: Z_m \rightarrow \omega \setminus \{0\}$ so that a word of form $$k_{\min Z_m}^{q_m(\min Z_m)}U_{(\min Z_m) + 1}'k_{(\min Z_m) + 1}^{q_{z((\min Z_m) +1)}((\min Z_m) + 1)}U_{(\min Z_m) + 2}'k_{(\min Z_m) + 2}^{q_{z((\min Z_m) + 2)}((\min Z_m) + 2)}\cdots$$ does not occur in $\Sub(\{U_x\}_{x \in X} \cup \{U_x^{-1}\}_{x \in X} \cup \{U_n'\}_{n \in \omega} \cup \{(U_n')^{-1}\}_{n \in \omega} \cup \mathcal{F})$.

Now we have produced a concrete word $U$ and to simplify notation we set $q_{z(n)}(n) = r_n$, so that $U \equiv U_0'k_0^{r_0}U_1'k_1^{r_1}\cdots$.  By arguing as in Lemma \ref{notdoneyet} we get that no nonempty terminal subword of $U$ is an element in $\Fine(\{U_x\}_{x\in X} \cup \{U_n'\}_{n \in \omega} \cup \mathcal{F})$.  Define $\iota$ to be the union $\iota = \bigcup_{m \in \omega}\iota_m$, so $\dom(\iota) = \bigcup_{m \in \omega}\dom(\iota_m)$ and $\im(\iota) = \bigcup_{m \in \omega}\im(\iota_m)$.  It is easy to see that $\iota$ is an order isomorphism.  We get by Lemma \ref{basiccloseproperties} (iii) that $\Close(\dom(\iota), \overline{W})$ and $\Close(\im(\iota), \overline{U})$, and so $\iota$ is a coi from $W$ to $U$.  We will prove that the collection $\{\coi(W_x, \iota_x, U_x)\}_{x \in X} \cup \{\coi(W_n, \iota_n, U_n')\}_{n \in \omega} \cup \{\coi(W, \iota, U)\}$ is coherent, from which the coherence of $\{\coi(W_x, \iota_x, U_x)\}_{x \in X} \cup \{\coi(W, \iota, U)\}$ is immediate.

We point out that every proper initial subword of $W$ is an element in the subgroup $\Fine(\{W_x\}_{x \in X})$, for if $W \equiv V_0V_1$ with $V_1$ nonempty, we can select $\lambda \in \overline{V_1}$, and $\lambda \in \overline{W_N}$ for some $N \in \omega$, so $\overline{V_0} \subseteq \bigcup_{n = 0}^{N-1}\overline{W_n}$, so $V_0$ is a subword of $\prod_{n = 0}^{N-1}W_n$, so $V_0$ is in $\Fine(\{W_x\}_{x \in X})$, since $W_0, \ldots, W_{N-1} \in \Fine(\{W_x\}_{x \in X})$.  Next, every nonempty terminal subword of $W$ is not in $\Fine(\{W_x\}_{x \in X})$, for if $W \equiv V_0V_1$ with $V_1 \not\equiv E$, we have $V_0 \in \Fine(\{W_x\}_{x \in X})$, so that if $V_1 \in \Fine(\{W_x\}_{x \in X})$ we get $W \in \Fine(\{W_x\}_{x \in X})$, contradiction.  Thus, any nonempty subword $V$ of $W$ which is an element in $\Fine(\{W_x\}_{x \in X})$ has $\overline{V} \subseteq \bigcup_{n = 0}^{N-1}\overline{W_n}$, for otherwise we get that $V$ is a nonempty terminal subword.

Let $y \in X \cup \omega$ and intervals $I \subseteq \overline{W}$ and $I' \subseteq \overline{W_y}$ and $\delta \in \{-1, 1\}$ such that $W\upharpoonright I \equiv (W_y \upharpoonright I')^{\delta}$.  Then in particular $W \upharpoonright I \in \Fine(\{W_x\}_{x \in X} \cup \{W_n\}_{n \in \omega}) = \Fine(\{W_x\}_{x \in X})$ (this latter equality holds because each $W_n \in \Fine(\{W_x\}_{x \in X})$).  Then $I$ is not a terminal interval in $\overline{W}$, and so $I \subseteq \bigcup_{n = 0}^{N-1}\overline{W_n}$.  Then $[[U \upharpoonright \alpha(I, \iota)]] = [[(U_y \upharpoonright \alpha(I', \iota_y))^{\delta}]]$, as by the proof of Lemma \ref{findsomerepresentative} we know that the collection 

\begin{center}
$\{\coi(W_x, \iota_x, U_x)\}_{x \in X} \cup \{\coi(W_n, \iota_n, U_n')\}_{n \in \omega}$

 $\cup \{\coi(\prod_{n = 0}^NW_n, \bigcup_{n}^N \iota_n, U_0'k_0^{r_0}\cdots U_N'k_N^{r_N})\}$
\end{center}

\noindent is coherent, and

$$
\begin{array}{ll}
[[U \upharpoonright \alpha(I, \iota)]] & = [[U_0'k_0^{r_0}\cdots U_N'k_N^{r_N} \upharpoonright \alpha(I, \bigcup_{n}^N \iota_n)]]\\
& = [[(U_y \upharpoonright \alpha(I', \iota_y))^{\delta}]]

\end{array}
$$

\noindent where the first equality holds because $U \upharpoonright \alpha(I, \iota) \equiv U_0'k_0^{r_0}\cdots U_N'k_N^{r_N} \upharpoonright \alpha(I, \bigcup_{n}^N \iota_n)$.

Suppose now that $I$ and $I'$ are intervals in $\overline{W}$ and $\delta \in \{-1, 1\}$ such that $W \upharpoonright I \equiv (W \upharpoonright I')^{\delta}$.  Then $W \upharpoonright I \in \Fine(\{W_x\}_{x \in X})$ if and only if $I$ is not a nonempty terminal interval.  If $W \upharpoonright I \in \Fine(\{W_x\}_{x \in X})$ then of course $(W \upharpoonright I)^{\delta} \equiv W \upharpoonright I' \in \Fine(\{W_x\}_{x \in X})$ as well, so that $I'$ is also not a nonempty terminal interval.  Then we may select $N \in \omega$ which is large enough that $I, I' \subseteq \bigcup_{n = 0}^N\overline{W_n}$ and argue as in Lemma \ref{findsomerepresentative} that $[[U \upharpoonright \alpha(I, \iota)]] = [[(U \upharpoonright \alpha(I', \iota))^{\delta}]]$ since the collection

\begin{center}
$\{\coi(W_x, \iota_x, U_x)\}_{x \in X} \cup \{\coi(W_n, \iota_n, U_n')\}_{n \in \omega}$

 $\cup \{\coi(\prod_{n = 0}^NW_n, \bigcup_{n = 0}^N \iota_n, U_0'k_0^{r_0}\cdots U_N'k_N^{r_N})\}$
\end{center}

\noindent is coherent.  Thus we suppose that $I$ and $I'$ are nonempty terminal intervals.  If it were the case that $\delta = -1$, then select $\lambda_0 \in I$ and we have on one hand that $W \upharpoonright \{\lambda \in I: \lambda \leq \lambda_0\} \in \Fine(\{W_x\}_{x \in X})$ since $\{\lambda \in I: \lambda \leq \lambda_0\}$ is not a terminal interval in $\overline{W}$, but on the other hand $W \upharpoonright \{\lambda \in I: \lambda \leq \lambda_0\} \notin \Fine(\{W_x\}_{x \in X})$ since $(W \upharpoonright \{\lambda \in I: \lambda \leq \lambda_0\})^{-1}$ is a nonempty terminal subword of the terminal subword $W \upharpoonright I'$ of $\overline{W}$.  That is a contradiction.  Thus we know $\delta = 1$.  So $I$ and $I'$ are each nonempty terminal intervals in $\overline{W}$, hence either $I \subseteq I'$ or $I'\subseteq I$.  If without loss of generality we have $I' \subsetneq I$ we select $\lambda \in I \setminus I'$.  Let $\underline{\iota}: I \rightarrow I'$ be an order isomorphism witnessing $W\upharpoonright I \equiv W \upharpoonright I'$.  Then we have $\lambda < \underline{\iota}(\lambda) < \underline{\iota}(\underline{\iota}(\lambda))<\cdots$ and $W(\lambda) = W(\underline{\iota}(\lambda)) = W(\underline{\iota}(\underline{\iota}(\lambda))) = \cdots$.  Either $W(\lambda) = 1$, which is impossible since $W$ is reduced, or $W(\lambda) \neq 1$, say $W(\lambda) = g \in G_p \setminus \{1\}$, which is impossible since $W$ is a word and cannot attain infinitely many outputs in $G_p$.  Thus we know $I = I'$, from which we have $U\upharpoonright \alpha(I, \iota) \equiv U\upharpoonright \alpha(I', \iota)$ and $[[U\upharpoonright \alpha(I, \iota)]] = [[U\upharpoonright \alpha(I', \iota)]]$.

Next, we know that no nomepty terminal subword of $U$ is an element of $$\Fine(\{U_x\}_{x \in X} \cup \{U_n'\}_{n \in \omega} \cup \mathcal{F})$$ and therefore if interval $I \subseteq \overline{U}$ is such that $U \upharpoonright I \in \Fine(\{U_x\}_{x \in X} \cup \{U_n'\}_{n \in \omega} \cup \mathcal{F})$ there exists some $N \in \omega$ such that $I \subseteq \overline{\prod_{n = 0}^N U_n'k_n^{r_n}}$.  Thus if $y \in X \cup \omega$, $I$ and $I'$ are intervals in $\overline{U}$ and $\overline{U_y}$ respectively, and $\epsilon \in \{-1, 1\}$ with $U \upharpoonright I \equiv (U_y \upharpoonright I')^{\epsilon}$, we have $U \upharpoonright I \in \Fine(\{U_x\}_{x \in X})$, so $I \subseteq \overline{\prod_{n = 0}^N U_n'k_n^{r_n}}$ for some $N$.  Then $[[W \upharpoonright \alpha(I, \iota^{-1})]] = [[(W_y\upharpoonright \alpha(I', \iota_y^{-1}))^{\epsilon}]]$ follows as in the proof of Lemma \ref{findsomerepresentative} as the collection 

\begin{center}
$\{\coi(W_x, \iota_x, U_x)\}_{x \in X} \cup \{\coi(W_n, \iota_n, U_n')\}_{n \in \omega}$

 $\cup \{\coi(\prod_{n = 0}^NW_n, \bigcup_{n = 0}^N \iota_n , U_0'k_0^{r_0}\cdots U_N'k_N^{r_N})\}$
\end{center}

\noindent is coherent.

Finally, we know that all nonempty terminal subwords of $U$ are not elements of $\Fine(\{U_x\}_{x \in X} \cup \{U_n'\}_{n \in \omega} \cup \mathcal{F})$, but it is also clear that any proper initial subword of $U$ is an element of $\Fine(\{U_x\}_{x \in X} \cup \{U_n'\}_{n \in \omega} \cup \mathcal{F})$, so for any nonempty interval $I \subseteq \overline{U}$ we have $U \upharpoonright I \in \Fine(\{U_x\}_{x \in X} \cup \{U_n'\}_{n \in \omega} \cup \mathcal{F})$ if and only if $I$ is not terminal.

Suppose that $I, I'$ are intervals in $\overline{U}$ and $\epsilon \in \{-1, 1\}$ is such that $U \upharpoonright I \equiv (U \upharpoonright I')^{\epsilon}$.  If $U \upharpoonright I \in \Fine(\{U_x\}_{x \in X} \cup \{U_n'\}_{n \in \omega} \cup \mathcal{F})$ then both $I$ and $I$ are empty or nonterminal, and we argue as before that $[[W \upharpoonright \alpha(I, \iota^{-1})]] = [[(W \upharpoonright \alpha(I', \iota^{-1}))^{\epsilon}]]$ using the fact that 

\begin{center}
$\{\coi(W_x, \iota_x, U_x)\}_{x \in X} \cup \{\coi(W_n, \iota_n, U_n')\}_{n \in \omega}$

 $\cup \{\coi(\prod_{n = 0}^NW_n, \bigcup_{n = 0}^N \iota_n , U_0'k_0^{r_0}\cdots U_N'k_N^{r_N})\}$
\end{center}

\noindent is coherent for an appropriate value of $N$.  If $U \upharpoonright I \notin \Fine(\{U_x\}_{x \in X} \cup \{U_n'\}_{n \in \omega} \cup \mathcal{F})$ then both $I$ and $I'$ are nonempty terminal and as before we argue that $\epsilon = 1$ and in fact $I = I'$, so that $[[W \upharpoonright \alpha(I, \iota^{-1})]] = [[(W \upharpoonright \alpha(I', \iota^{-1}))^{\epsilon}]]$ is immediate.  The lemma is proved.

\end{proof}

\end{section}

\begin{section}{Extension to a $\mathbb{Q}$-type concatenation}\label{Qconcatenation}

In this section we shall prove only the following.

\begin{proposition}\label{Qtypeconcat}    Suppose that $\{G_n\}_{n \in \omega}$ and $\{K_n\}_{n \in \omega}$ are collections of groups without elements of order $2$ such that each group has an element of infinite order.  Suppose also that $\{\coi(W_x, \iota_x, U_x)\}_{x \in X}$ is a coherent collection of coi triples from $\Red(\{G_n\}_{n \in \omega})$ to $\Red(\{K_n\}_{n \in \omega})$ with $|X|<2^{\aleph_0}$.

\begin{enumerate}

\item Let $W \in \Red(\{G_n\}_{n \in \omega})$ be such that $\overline{W} \equiv \prod_{s \in \mathbb{Q}} I_s$ with each $I_s \neq \emptyset$, $W \upharpoonright I_s \in \Fine(\{W_x\}_{x\in X})$ for each $s \in \mathbb{Q}$, and $W \upharpoonright \bigcup_{s \in \Lambda} I_s \notin \Fine(\{W_x\}_{x\in X})$ for each interval $\Lambda \subseteq \mathbb{Q}$ with more than one point.  Then there exists $U \in \Red(\{K_n\}_{n \in \omega})$ and coi $\iota$ from $W$ to $U$ such that $\{\coi(W_x, \iota_x, U_x)\}_{x\in X} \cup \{\coi(W, \iota, U)\}$ is coherent.

\item Let $U \in \Red(\{K_n\}_{n \in \omega})$ be such that $\overline{U} \equiv \prod_{s \in \mathbb{Q}} I_s$ with each $I_s \neq \emptyset$, $U \upharpoonright I_s \in \Fine(\{U_x\}_{x\in X})$ for each $s \in \mathbb{Q}$, and $U \upharpoonright \bigcup_{s \in \Lambda} I_s \notin \Fine(\{U_x\}_{x\in X})$ for each interval $\Lambda \subseteq \mathbb{Q}$ with more than one point.  Then there exists $W \in \Red(\{G_n\}_{n \in \omega})$ and coi $\iota$ from $W$ to $U$ such that $\{\coi(W_x, \iota_x, U_x)\}_{x\in X} \cup \{\coi(W, \iota, U)\}$ is coherent.

\end{enumerate}

\end{proposition}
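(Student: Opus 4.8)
The plan is to establish (1); assertion (2) then follows by the symmetric argument, interchanging the two sequences and replacing $\iota$ by $\iota^{-1}$.

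\emph{Construction.} First I fix an enumeration $\mathbb{Q} = \{s_m\}_{m \in \omega}$. Proceeding inductively on $m$ and repeatedly applying Lemmas \ref{findsomerepresentative} and \ref{makeitsmaller} (with Lemma \ref{ascendingchaincoi} used to pass to the union), I obtain for each $m$ a nonempty reduced word $U_m' \in \Red(\{K_n\}_{n \in \omega})$ and a coi $\iota_m$ from $W \upharpoonright I_{s_m}$ to $U_m'$ with nonempty domain, such that $\{\coi(W_x, \iota_x, U_x)\}_{x \in X} \cup \{\coi(W \upharpoonright I_{s_m}, \iota_m, U_m')\}_{m \in \omega}$ is coherent and, moreover, the values $d(U_m')$ grow rapidly (in particular $d(U_m') > m$ and $d(U_m')$ strictly increasing in $m$). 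I then interleave separating letters: choosing $g_m$ of infinite order in a group $K_{j_m}$, with the indices $j_m$ strictly increasing and chosen compatibly with the $d(U_m')$, I set
\[
U \equiv \prod_{s \in \mathbb{Q}} \bigl(g_{m(s)}^{q_{m(s)}}\, U_{m(s)}'\bigr), \qquad \iota = \bigcup_{m \in \omega} \iota_m,
\]
where $m(s)$ is the index with $s = s_{m(s)}$ and the exponents $q_m \in \omega \setminus \{0\}$ will be fixed below. By Lemma \ref{basiccloseproperties}(iii) the domain of $\iota$ is close in $\overline{W}$ and its range is close in $\overline{U}$.

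\emph{$U$ is reduced.} Since every block $\overline{g_{m(s)}^{q_{m(s)}} U_{m(s)}'}$ is nonempty and $\mathbb{Q}$ is densely ordered, any two neighbouring elements of $\overline{U}$ lie in a common block, so reducedness of the $U_m'$ together with $d(U_m') > j_m$ and $g_m$ of infinite order forbids adjacent letters in a common $K_n$; and $U$ is a word because $j_m \to \infty$. If $U \upharpoonright J \sim E$ for a nonempty interval $J$ not contained in a single block, then $J$ contains entirely the blocks indexed by the (infinite) interior of the interval of $\mathbb{Q}$ it meets; choosing among those the block of least index $m^\sharp$ and applying the retraction $p_{j_{m^\sharp}}$, one checks using the rapid growth of the $d(U_m')$ relative to the $j_m$ that the separating letter $g_{m^\sharp}^{q_{m^\sharp}} \in K_{j_{m^\sharp}}$ survives in the freely reduced form of $p_{j_{m^\sharp}}(U \upharpoonright J)$, contradicting $U \upharpoonright J \sim E$. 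Hence $U$ is reduced and $\iota$ is a coi from $W$ to $U$.

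\emph{A dichotomy and the choice of exponents.} Because $\Fine(\{W_x\}_{x \in X})$ is closed under subwords, the hypotheses on $W$ yield the decisive dichotomy: for an interval $I \subseteq \overline{W}$ one has $W \upharpoonright I \in \Fine(\{W_x\}_{x \in X})$ if and only if $I$ is contained in a single block $I_s$ (an interval meeting two blocks contains a subword of the form $W \upharpoonright \bigcup_{s \in \Lambda} I_s$ with $|\Lambda| > 1$). I now fix the $q_m$ by a cardinality argument in the spirit of Lemmas \ref{newword1}, \ref{newword2} and \ref{notdoneyet}: for each of the fewer than $2^{\aleph_0}$ words among the $U_x$, their inverses, the $U_m'$, and the reduced words over the subgroup $\mathcal{F} = \langle\{g_m\}_{m \in \omega}\rangle$, there are only finitely many ways to embed, respecting $d$ and the separating letters, a fixed $\mathbb{Q}$-shaped template as an interval; hence the $q_m$ can be chosen so that no subword of $U$ meeting two distinct blocks lies in $\Fine(\{U_x\}_{x \in X} \cup \{U_m'\}_{m \in \omega} \cup \mathcal{F})$. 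This produces the analogous dichotomy on the $U$-side.

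\emph{Coherence, and the main obstacle.} It suffices to check that the collection obtained by adjoining $\coi(W, \iota, U)$ while retaining the intermediate triples $\coi(W \upharpoonright I_{s_m}, \iota_m, U_m')$ is coherent. Every matching that involves some $W_y$ or $U_y$ with $y \in X$ forces the relevant interval of $\overline{W}$ (or $\overline{U}$) into a single block by the dichotomy, and a matching of an interval of $\overline{W}$ with an interval of $\overline{W}$ (or of $\overline{U}$ with $\overline{U}$) in which one side is single-block has the other side single-block too; in all these cases the required equality of $[[\cdot]]$-classes reduces—via $\iota \upharpoonright I_{s_m} = \iota_m$ and Observation \ref{finitedeletions}, exactly as in the proof of Lemma \ref{findsomerepresentative}—to the already-known coherence of $\{\coi(W_x, \iota_x, U_x)\}_{x \in X} \cup \{\coi(W \upharpoonright I_{s_m}, \iota_m, U_m')\}$. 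The one remaining case is a matching $W \upharpoonright I \equiv (W \upharpoonright I')^{\delta}$, or $U \upharpoonright Q \equiv (U \upharpoonright Q')^{\epsilon}$, in which both intervals meet two distinct blocks; here I will use that the block decomposition is intrinsic—its pieces are the maximal sub-intervals whose restriction lies in the relevant $\Fine$—so that the witnessing order isomorphism permutes blocks, reduce (via a boundary–interior–boundary splitting into finitely many pieces) the verification to the individual single-block matchings plus the identification of the interior words, and then invoke the rigidity built into $U$, namely that the separating letters have pairwise distinct indices and, by the absence of involutions, that no multi-block interval of $\overline{U}$ carries an order-reversing self-isomorphism of its subword; this forces the block permutation (and $\delta$, respectively $\epsilon$) to be trivial. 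This last case—arranging that $U$ is simultaneously reduced, uncaptured by the ambient $\Fine$ on every multi-block interval, and devoid of order-reversing self-similarities—is the heart of the argument, and is precisely where both the $2^{\aleph_0} > |X|$ counting and the no-involution hypothesis are genuinely needed. With coherence verified, (1) follows, and (2) is symmetric.
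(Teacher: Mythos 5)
There is a genuine gap in your treatment of multi-block matchings, and it stems from a structural choice made at the outset. You take a fresh $U_m'$ for each rational $s_m$ and decorate each block with a separator $g_{m(s)}$ whose group index $j_{m(s)}$ is distinct for every $s$. The paper instead begins by collapsing the blocks $W\upharpoonright I_s$ to a deduplicated list of representatives $\{W_m\}_{m\in\omega}$, writing $W\upharpoonright I_s \equiv W_{P(s)}^{A(s)}$ (the absence of involutions is used exactly here, to make the sign $A(s)$ well-defined). It then chooses one $U_m'$ per representative and decorates symmetrically, $U_s \equiv h_{P(s)}^{A(s)r_s}(U_{P(s)}')^{A(s)}h_{P(s)}^{A(s)r_s}$, so that whenever $W\upharpoonright I_s \equiv (W\upharpoonright I_{s'})^{\pm 1}$ the corresponding $U$-blocks are literally $\equiv$-equal or $\equiv$-inverse as words. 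That word-level symmetry is precisely what allows Lemma \ref{itworksforQ} to telescope the infinite product over the interior rationals when verifying coherence for a multi-block matching $W\upharpoonright I \equiv (W\upharpoonright I')^{\delta}$: the relation $U_s \equiv U_{F(s)}^{-1}$ on individual blocks passes to an $\equiv$-identity of the whole $\mathbb{Q}$-indexed concatenation.

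Your construction cannot close this case. You observe correctly that the witnessing order isomorphism permutes blocks, but then hope to ``invoke the rigidity built into $U$'' to force the block permutation and $\delta$ to be trivial. That is the wrong target: the matching is given on the $W$-side and must be \emph{accommodated} by an identity of $[[\cdot]]$-classes on the $U$-side, not precluded by making $U$ rigid. With your construction, matching blocks of $W$ are sent to $U$-blocks whose separator letters lie in pairwise distinct groups $K_{j_m}$ and whose cores $U_{m(s)}'$ were chosen independently; these corresponding $U$-blocks are equal in $\mathcal{A}(\{K_n\}_{n\in\omega})$ by the coherence of the intermediate collection (together with Observation \ref{finitedeletions}), but they are not $\equiv$-related as words, and $\beth$ is merely a group homomorphism, not compatible with $\mathbb{Q}$-indexed concatenation: block-by-block agreement of $[[\cdot]]$-classes does not yield the needed equality $[[U\upharpoonright\varpropto(I,\iota)]] = [[(U\upharpoonright\varpropto(I',\iota))^{\delta}]]$. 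A secondary defect is the one-sided separator: $U_s^{-1}$ then carries its separator at the opposite end, destroying the block-shape symmetry that the paper exploits exactly in the $\delta = -1$ case.
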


We let $\{W_m\}_{m \in \omega}$ be an enumeration such that for each $s \in \mathbb{Q}$ we have some $m\in \omega$ for which $W \upharpoonright I_s \equiv W_m$ or $W \upharpoonright I_s \equiv W_m^{-1}$  (and notice that both cannot hold as there are no elements of order $2$ in the groups, by \cite[Corollary 1.6]{E}), and for distinct $m_0 \neq m_1$ in $\omega$ we have $W_{m_0} \not\equiv W_{m_1} \not\equiv W_{m_0}^{-1}$.  Such a list $\{W_m\}_{m \in \omega}$ must be infinite, otherwise by the pidgeonhole principle there is some $q' \in \mathbb{Q}$ such that $\{q \in \mathbb{Q} \mid W \upharpoonright I_q \equiv W \upharpoonright I_{q'}\}$ is infinite, which means $W$ is not a reduced word.  Now we define a function $P: \mathbb{Q} \rightarrow \omega$ be given by $P(s) = m$ where $W \upharpoonright I_s \in \{W_m, W_m^{-1}\}$ and function $A: \mathbb{Q} \rightarrow \{-1, 1\}$ by

\[
A(s) = \left\{
\begin{array}{ll}
1
                                            & \text{if }  W \upharpoonright I_s \equiv W_{P(s)}, \\
-1                                        & \text{if }W \upharpoonright I_s \equiv W_{P(s)}^{-1}.
\end{array}
\right.
\]

\noindent Thus we may write $W \equiv \prod_{s \in \mathbb{Q}} (W_{P(s)})^{A(s)}$.

Select nonempty $U_0' \in \Red(\{K_n\}_{n \in \omega})$ with $d(U_0') > 0$ and coi $\iota_0$ from $W_0$ to $U_0'$ with nonempty domain such that $\{\coi(W_x, \iota_x, U_x)\}_{x\in X} \cup \{\coi(W_0, \iota_0, U_0')\}$ is coherent by Lemmas \ref{findsomerepresentative} and \ref{makeitsmaller}.  Assuming we have chosen $U_m'$ and $\iota_m$ we select nonempty $U_{m + 1}' \in \Red(\{K_n\}_{n \in \omega})$ with $d(U_{m+1}') > m+1$ and coi $\iota_{m + 1}$ from $W_{m+1}$ to $U_{m+1}'$, the domain of $\iota_{m + 1}$ nonempty, and $\{\coi(W_x, \iota_x, U_x)\}_{x\in X} \cup \{\coi(W_j, \iota_j, U_j')\}_{j = 0}^{m+1}$ coherent by Lemmas \ref{findsomerepresentative} and \ref{makeitsmaller}.  By Lemma \ref{ascendingchaincoi} the collection $$\{\coi(W_x, \iota_x, U_x)\}_{x\in X} \cup \{\coi(W_m, \iota_m, U_m')\}_{m \in \omega}$$ is coherent.

For each $m \in \omega$ select $h_m \in K_m$ of infinite order.  For $s\in \mathbb{Q}$ write $U_s \equiv h_{P(s)}^{A(s)r_s}(U_{P(s)}')^{A(s)}h_{P(s)}^{A(s)r_s}$, where the number $r_s \in \omega \setminus \{0\}$ has yet to be determined.  We consider a word of form

\begin{center}

$U \equiv \prod_{s \in \mathbb{Q}} U_s$.

\end{center}

\noindent Thus the totally ordered set $\overline{U}$ is known, the function $d \circ U$ is known, and many values of $U$ are known, and such a $U$ will be totally defined once we have determined the values of the $r_s$.

The set of nonempty open intervals in $\mathbb{Q}$ with rational endpoints is countable, so let $\{J_j\}_{j \in \omega}$ be an enumeration of this set.  Let $L: \omega \rightarrow \omega \times \omega$ be a bijection, with $L(k) = (L_0(k), L_1(k))$, and inductively define $\delta(k) = \min (\{P(s):s \in J_{L_0(k)}\} \setminus \{\delta(0), \ldots, \delta(k-1)\})$.  For each $j \in \omega$ we let $Z_j = \delta(L^{-1}(\{j\} \times \omega))$.  Then for every $m \in Z_j$ there exists $s \in J_j$ with $P(s) = m$.  Also, $\omega = \bigsqcup_{j \in \omega} Z_j$, and $|Z_j| = \aleph_0$ for each $j \in \omega$.  

For a fixed $j \in \omega$ we let $v_{0, j}: \omega \rightarrow Z_j$ be a bijection, and let $v_{1, j}: \omega \rightarrow J_j$ be a function such that $P(v_{1, j}(k)) = v_{0, j}(k)$ (and we note that $v_{1, j}$ is an injection).  Let $f_{1, j}: \omega \rightarrow \bigcup_{s \in J_j} \overline{U_{s}} \subseteq \overline{U}$ be the function given by $f_{1, j}(k) = \min \overline{U_{v_{1, j}(k)}}$, so $f_{1, j}$ is also an injection.  Now by Lemma \ref{newword2} we select a function $q_j: f_{1, j}(\omega) \rightarrow \omega \setminus \{0\}$ so that there is no word $V \in \Sub(\{U_x\}_{x \in X} \cup \{U_x^{-1}\}_{x \in X} \cup \{U_m'\}_{m \in \omega} \cup \{(U_m')^{-1}\}_{m \in \omega})$ with domain $\overline{V}$ which is order isomorphic, via some $\iota$, to $\overline{\prod_{s \in J_j} U_s}$ with $d \circ V \circ \iota = d \circ U\upharpoonright  \overline{\prod_{s \in J_j} U_s}$ and such that $V(\iota(f_{1, j}(k))) = h_{v_{0, j}(k)}^{A(v_{1, j}(k))q_j(k)}$ for all $k \in \omega$.  Note that for arbitrary $s \in \mathbb{Q}$ there is a unique $j \in \omega$ such that $P(s) \in J_j$ and unique $k \in \omega$ such that $v_{0, j}(k) = P(s)$, and we let $r_s = q_j(k)$.  For convenience we let $R: \omega \rightarrow \omega$ be the function such that for each $s \in \mathbb{Q}$ we have $$U_s \equiv h_{P(s)}^{A(s)R(P(s))}(U_{P(s)}')^{A(s)}h_{P(s)}^{A(s)R(P(s))}.$$Now we have fully determined the function $U$. 

Notice that $U$ is indeed a word, for it is a concatenation $U \equiv \prod_{s \in \mathbb{Q}} U_s$ such that for each $N \in \omega$, the set $\{s \in \mathbb{Q}: d(U_s) \leq N\}$ is finite.  We prove that $U$ is reduced.  We give some observations.  We have for each $s \in \mathbb{Q}$ that $U_s \equiv h_{P(s)}^{A(s)R(P(s))}(U_{P(s)}')^{A(s)}h_{P(s)}^{A(s)R(P(s))}$ is reduced, since $d(U_{P(s)}') > d(h_{P(s)}^{A(s)R(P(s))}) = P(s)$, using Lemma \ref{Eda}.  Also, $d(U_s) = P(s)$ for each $s \in \mathbb{Q}$.  We also point out that $U_s \equiv (U_{s'})^{-1}$ if and only if $W_s \equiv (W_{s'})^{-1}$.  Also, $U_s \equiv (U_{s'})^{-1}$ if and only if $P(s) = P(s')$ and $A(s) = -A(s')$.  Furthermore we know that for $i, i' \in \overline{U}$, if $i$ is adjacent to $i'$ then there is some $s \in \mathbb{Q}$ such that $i, i' \in \overline{U_s}$.  This is clear since if $i \in \overline{U_s}$ and $i' \in \overline{U_{s'}}$ and without loss of generality $s < s'$ then the open interval $(s, s') \subseteq \mathbb{Q}$ is infinite and so we select $s'' \in (s, s')$ and note that $U_{s''}$ is not an empty word, so letting $i'' \in \overline{U_{s''}}$ we have $i < i'' < i'$.

Now we must check that our word $U$ is indeed reduced.  In the appendix we prove a more general fact (Theorem \ref{intheappendix}) which allows involutions in the groups.  The proof is kinder because the statements are more general and there are fewer exponents floating around.

\begin{lemma}\label{Qwordreduced}  The word $U$ is reduced.

\end{lemma}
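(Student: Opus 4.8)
The plan is to argue by contradiction via the characterisation of reduced words in Proposition \ref{reductionscheme}. First note that every letter of $U$ is non-identity: the $h$-powers $h_{P(s)}^{A(s)R(P(s))}$ are non-identity since $h_{P(s)}$ has infinite order and $R(P(s))\neq 0$, and the letters of $(U_{P(s)}')^{A(s)}$ are non-identity because $U_{P(s)}'$ is reduced. Hence by Proposition \ref{reductionscheme}(2) it suffices to show $U$ carries no non-empty reduction scheme; unwinding the proof of that proposition, it is enough to verify (i) no two neighbouring letters of $U$ lie in a common $K_n$, and (ii) no non-empty subword of $U$ is $\sim E$. Claim (i) is immediate from an observation made just before the lemma: any two neighbouring positions of $\overline U$ lie in a common $\overline{U_s}$, and $U_s$ is reduced.

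For (ii), suppose $U\upharpoonright I\sim E$ with $I$ a non-empty interval, and set $\Lambda=\{s\in\mathbb Q:I\cap\overline{U_s}\neq\emptyset\}$, an interval of $\mathbb Q$. If $|\Lambda|=1$ then $U\upharpoonright I$ is a non-empty subword of the reduced word $U_s$, hence itself reduced, hence $\not\sim E$ by Lemma \ref{Eda} -- a contradiction. So $\Lambda$ is infinite. By Proposition \ref{reductionscheme}(1) fix a reduction scheme $\mathcal S$ on $U\upharpoonright I$ with $\bigcup_{C\in\mathcal S}\set(C)=I$ and $\pi(U,C)\equiv E$ for all $C\in\mathcal S$. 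Among the members of $\mathcal S$ choose $C=(i_0;\dots;i_k)$ with $d(C)$ least, and among those one which is minimal for the strict partial order ``$C'\prec C$ iff $\set(C')$ lies inside some gap $(i_j,i_{j+1})$ of $C$'' -- this is legitimate since the members of $\mathcal S$ of any fixed depth $M$ are finitely many, having pairwise disjoint sets of $K_M$-letters of the word $U$. Put $M=d(C)$.

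The core is a chain of deductions. If $i_0,\dots,i_k$ all lay in one $\overline{U_s}$, then the gap clause of Definition \ref{redschdef} together with Proposition \ref{reductionscheme}(1) would make $U_s\upharpoonright[i_0,i_k]$ a non-empty subword of $U_s$ that is $\sim\pi(U,C)\equiv E$, contradicting that $U_s$ is reduced; so $C$ meets at least two pieces. Minimality of $M$ forces every $i_j$ to carry a letter of $d$-value $M$ and every piece $U_s$ meeting $C$ to have $P(s)=M$ (otherwise $P(s)<M$, the position $i_j$ lies strictly inside the middle factor $(U_{P(s)}')^{A(s)}$, so $\max\overline{U_s}$ lies in a gap of $C$ and carries a letter of $d$-value $P(s)<M$, producing via the gap clause a member of $\mathcal S$ of depth $<M$). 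Thus each $i_j$ is an $h$-letter of a piece with $P=M$; $\prec$-minimality forces it to be the $h$-letter at the end of its piece facing the adjacent gap of $C$; and no piece contributes both its $h$-letters, since then the non-empty reduced word $(U_M')^{\pm1}$ would be $\sim E$ by the gap clause. Chaining these constraints forces $k=1$ (an interior $i_j$ would be simultaneously the ``left'' and the ``right'' $h$-letter of its piece). Hence $C=(i_0;i_1)$ with $i_0=\max\overline{U_c}$ and $i_1=\min\overline{U_d}$ for some $c<d$ in $\Lambda$ with $P(c)=P(d)=M$; then $\pi(U,C)\equiv E$ reads $h_M^{R(M)(A(c)+A(d))}=1$, so $A(c)=-A(d)$, and the gap clause forces $\prod_{c<t<d}U_t\sim E$.

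Now $\prod_{c<t<d}U_t$ is again a $\mathbb Q$-type concatenation of the pieces $U_t$, each with $P(t)>M$ (only letters of $d$-value $>M$ occur in the gap of $C$, by the argument above). Repeating the analysis inside this word and iterating, one obtains a strictly nested sequence of non-empty open rational intervals $(c_0,d_0)\supsetneq(c_1,d_1)\supsetneq\cdots$ with $\prod_{c_n<t<d_n}U_t\sim E$ and with least $P$-values $M_0<M_1<\cdots$ strictly increasing. I expect the main obstacle of the write-up to be exactly the clean closing of this loop -- showing the descent cannot persist. One argues that, because $P$ is finite-to-one, no rational, and hence no position of $\overline U$, lies in every $(c_n,d_n)$, so these intervals collapse; one then checks that at a stage with only one $P$-minimal piece the projection $p_{M_n}$ already refutes $\sim E$ (the product of the finitely many $P$-minimal $h$-powers would have to be trivial in $*_{j\le M_n}K_j$), while at each stage $\prec$-minimality forces the two pieces produced to be consecutive among the $P$-minimal ones, so the supply of them is consumed -- contradicting that $I$ meets only finitely many pieces of bounded depth. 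This contradiction establishes (ii), and with (i) it shows $U$ is reduced.
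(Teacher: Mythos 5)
Your analysis of the combinatorics of a hypothetical reduction scheme on $U$ -- that the letters of the depth-minimal component $C$ must all be $h$-letters lying at the ends of pieces $U_s$ with $P(s)$ equal to the minimal depth, and that one can normalise to pairs matching a $\max\overline{U_c}$ with a $\min\overline{U_d}$ -- is broadly in the spirit of the paper's proof (though the paper first shrinks $I$ to a union of whole pieces $\overline{U_s}$, which you skip; without that step some of your appeals to ``$\max\overline{U_s}$ lies in a gap of $C$'' are not quite licensed at the boundary).

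The genuine gap is in the endgame. Having extracted the pairing, you try to close the argument by a descent inside $U$: strictly nested rational intervals $(c_n,d_n)$ with strictly increasing least $P$-values $M_n$. But nothing forces such a descent to terminate. A strictly decreasing chain of nonempty open intervals of $\mathbb Q$ need not stabilise or reach a singleton -- it can converge to an irrational ``hole'' while every stage still contains infinitely many pieces (just of ever larger depth). Your own hedge ``I expect the main obstacle of the write-up to be exactly the clean closing of this loop'' is accurate; the loop does not close, and the appeal to ``the supply being consumed'' conflates the finiteness of $\{s:P(s)=M_n\}$ at each fixed stage with a global finiteness that is not available, since $M_n$ keeps increasing.

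The paper's proof does not attempt any descent in $U$. Its key move, which your proposal is missing entirely, is to transfer the pairing back to $W$. After normalising the reduction scheme to pairs, one records that whenever $\min\overline{U_s}$ is paired with $\max\overline{U_{s'}}$ the original pieces satisfy $W_s\equiv W_{s'}^{-1}$, and that these pairs have the correct nesting. One then uses the pairing to build a reduction scheme on the nonempty subword $\prod_{s\in\mathcal I}W_s$ of $W$, showing it is $\sim E$. This contradicts the standing hypothesis that $W$ is reduced. In other words, the terminal contradiction is against the reducedness of $W$, not against any finiteness property of $U$; that is the idea you need and do not have.
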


\begin{proof}  Suppose for contradiction that $U$ is not reduced.  We point out that there cannot be adjacent elements $i_0, i_1 \in \overline{U}$ such that $U(i_0)$ and $U(i_1)$ are in the same group $G_N$, since each of the words $U_s$ is nonempty reduced and $\mathbb{Q}$ is order dense.  Then as $U$ is not reduced, there exists a nonempty interval $I \subseteq \overline{U}$ such that $U \upharpoonright I \sim E$.  Then by Proposition \ref{reductionscheme} (1) we have a reduction scheme $\mathcal{S}$ on $U \upharpoonright I$ such that $\bigcup_{C \in \mathcal{S}} \set(C) = I$ and $\pi(U \upharpoonright I, C) \equiv E$ for all $C \in \mathcal{S}$.

Let $N_0' = \min\{P(s):(\exists s \in \mathbb{Q}) \overline{U_s} \cap I \neq \emptyset\}$.  We claim that there exists a component $C = (i_0; \ldots; i_k) \in \mathcal{S}$ such that $d(C) = N_0'$ and for each $0 \leq j \leq k$ we have $U(i_j) \in \{h_{N_0'}^{R(N_0')}, h_{N_0'}^{-R(N_0')}\}$.  To see this, take $s' \in \mathbb{Q}$ such that $\overline{U_{s'}} \cap I \neq \emptyset$ and $P(s') = N_0'$.  Take $C' = (i_0'; \ldots; i_m') \in \mathcal{S}$ such that $\set(C') \cap \overline{U_{s'}} \neq \emptyset$, say $0 \leq j' \leq m$ has $i_{j'} \in \set(C') \cap \overline{U_{s'}}$.  If $d(C') = N_0'$ we will take $C = C'$, so suppose that this is not the case.  As $\pi(U \upharpoonright I, C) = E$ and the word $U$ is simple, we know $m > 0$.  Let without loss of generality $j' + 1 \leq m$ (otherwise $0 \leq j' - 1$ and the proof will be similar).  As $U_{s'}$ is reduced, we know $i_{j' + 1} \notin \overline{U_{s'}}$, say $i_{j' + 1} \in \overline{U_{s''}}$ and $s' < s''$ in $\mathbb{Q}$.  As $\mathcal{S}$ is a reduction scheme and $i_{j'} < \max(\overline{U_{s'}}) < i_{j' + 1}$ there exists $C = (i_0; \ldots; i_k) \in \mathcal{S}$ such that $\max(\overline{U_{s'}}) = i_j \in \set(C)$.  As $P(s') = N_0'$ we know $U(i_j) \in \{h_{N_0'}^{R(N_0')}, h_{N_0'}^{-R(N_0')}\}$ so in particular $d(C) = N_0'$.  Thus in either case we have found a component $C \in \mathcal{S}$ with $d(C) = N_0'$ and $\set(C) \cap \overline{U_{s'}} \neq \emptyset$.  Letting $J = \{s \in \mathbb{Q} \mid \set(C) \cap \overline{U_s} \neq \emptyset\}$, by minimality of $N_0'$ we know that $P(s) = N_0'$ for each $s \in J$.  Then as $d(C) = N_0'$, each element of $\set(C)$ is either a $\max(\overline{U_s})$ or a $\min(\overline{U_s})$ for some $s \in J$.  Therefore $U(i_j) \in \{h_{N_0'}^{R(N_0')}, h_{N_0'}^{-R(N_0')}\}$ for each $i_j \in \set(C)$.

As $\pi(U, C) \equiv E$ we have that there exist some $0 \leq j \leq k$ for which $U(i_j) = h_{N_0'}^{R(N_0')}$ and there also exist some $0 \leq j \leq k$ for which $U(i_j) = h_{N_0'}^{-R(N_0')}$.  Then there exists some $0 \leq j' \leq k$ for which $U(i_{j'}) = (U(i_{j'+1}))^{-1}$.  Then the reduction scheme $\{C' \in \mathcal{S}: \set(C') \cap (i_{j'}, i_{j'+1}) \neq \emptyset\}$ witnesses that $U \upharpoonright (i_{j'}, i_{j'+1}) \sim E$.  Letting $i_{j'} \in \overline{U_{s_0'}}$ and $i_{j'+1} \in \overline{U_{s_1'}}$ we have $P(s_0') = P(s_1') = N_0'$ and $i_{j'} \in \{\min\overline{U_{s_0'}}, \max\overline{U_{s_0'}}\}$ and similarly $i_{j'+1} \in \{\min\overline{U_{s_1'}}, \max\overline{U_{s_1'}}\}$.  If $i_{j'} = \min\overline{U_{s_0'}}$ then we claim that $i_{j'+1} = \max\overline{U_{s_1'}}$, for otherwise we have $(i_{j'}, i_{j'+1}) \cap \{i \in \overline{U}: d(U(i)) = N_0'\}$ is odd and so the word $p_{N_0'}(U \upharpoonright (i_{j'}, i_{j'+1}))$ does not represent the trivial element in $G_{N_0'}$, since $h_{N_0'}$ is of infinite order, a contradiction.  By the same token, if $i_{j'} = \max\overline{U_{s_0'}}$ then $i_{j'+1} = \min\overline{U_{s_1'}}$.  In either case, we see that $U \upharpoonright (\max\overline{U_{s_0'}}, \min\overline{U_{s_1'}}) \sim E$, for even if $i_{j'} = \min\overline{U_{s_0'}}$ and $i_{j'+1} = \max\overline{U_{s_1'}}$ we have that

\begin{center}

$E \sim (U_{s_0'}\upharpoonright (\overline{U_{s_0'}} \setminus \{\min\overline{U_{s_0'}}\}))^{-1} E (U_{s_0'}\upharpoonright (\overline{U_{s_0'}} \setminus \{\min\overline{U_{s_0'}}\}))$

$\equiv (U_{s_0'}\upharpoonright \overline{U_{s_0'}} \setminus \{\min\overline{U_{s_0'}}\})^{-1} E (U_{s_1'}\upharpoonright( \overline{U_{s_1'}} \setminus \{\max\overline{U_{s_1'}}\}))^{-1}$

$\sim  (U_{s_0'}\upharpoonright \overline{U_{s_0'}} \setminus \{\min\overline{U_{s_0'}}\})^{-1} (U\upharpoonright (i_{j'}, i_{j'+1})) (U_{s_1'}\upharpoonright( \overline{U_{s_1'}} \setminus \{\max\overline{U_{s_1'}}\}))^{-1}$

$\sim U\upharpoonright (\max\overline{U_{s_0'}}, \min\overline{U_{s_1'}})$.

\end{center}

\noindent  In particular we may replace the interval $I$ with the nonempty interval $$(\max\overline{U_{s_0'}}, \min\overline{U_{s_1'}})$$ and thus get that $I$ is an open interval such that $I = \bigcup_{I \cap \overline{U_s} \neq \emptyset} \overline{U_s}$, and also replace the old reduction scheme $\mathcal{S}$ with $\{C' \in \mathcal{S}: \set(C) \cap (\max\overline{U_{s_0'}}, \min\overline{U_{s_1'}}) \neq \emptyset\}$.  Henceforth in the proof we will therefore assume that $I = \bigcup_{I \cap \overline{U_s} \neq \emptyset} \overline{U_s}$ and that $\mathcal{S}$ is a reduction scheme on $U\upharpoonright I$ such that $\pi(U \upharpoonright I, C) \equiv E$ for all $C \in \mathcal{S}$ and $\bigcup_{C\in \mathcal{S}}\set(C) = I$.  Let $\mathcal{I} \subseteq \mathbb{Q}$ be the open interval $\{s \in \mathbb{Q}: I \cap \overline{U_s} \neq \emptyset\}$.  We let $Q: I \rightarrow \mathcal{I} \subseteq \mathbb{Q}$ be the surjective function defined by $Q(i)= s$ where $i \in \overline{U_s}$.

Let $\{N_0, N_1, \ldots\} = \{P(s):(\exists s \in \mathbb{Q}) \overline{U_s} \cap I \neq \emptyset\}$, with $N_k < N_{k+1}$, so in particular $N_0 = \min\{P(s):(\exists s \in \mathbb{Q}) \overline{U_s} \cap I \neq \emptyset\}$.  Suppose first that $C = (i_0; \ldots; i_k) \in \mathcal{S}$ and $s_0, s_1 \in \mathbb{Q}$ are such that $\overline{U_{s_0}} \cap \set(C) \neq \emptyset$ and $\overline{U_{s_1}} \cap \set(C) \neq \emptyset$ and $P(s_0) = N_0$.  We'll show that $P(s_1) = N_0$.  If this is not the case, there exist $i_l, i_{l+1} \in \set(C)$ with $i_l \in \overline{U_{s_0'}}$ and $i_{l+1} \in \overline{U_{s_1'}}$ such that either $P(s_0') = N_0$ and $P(s_1') > N_0$, or such that $P(s_0') > N_0$ and $P(s_1') = N_0$.  If without loss of generality $P(s_0') = N_0$ and $P(s_1') > N_0$, we know by Definition \ref{redschdef} condition (2) and Proposition \ref{reductionscheme} part (1) that $U \upharpoonright (i_l, i_{l+1}) \sim E$, however $(i_l, i_{l+1}) \cap \{i \in \overline{U}: d(U(i)) = N_0\}$ is odd and so the word $p_{N_0}(U \upharpoonright (i_l, i_{l+1}))$ does not represent the trivial element in $G_{N_0}$, since $h_{N_0}$ is of infinite order, a contradiction.  Now, suppose that it is the case that whenever $k \leq K$ and $C \in \mathcal{S}$ and $s_0, s_1 \in \mathbb{Q}$ are such that $\overline{U_{s_0}} \cap \set(C) \neq \emptyset$ and $\overline{U_{s_1}} \cap\set(C) \neq \emptyset$ and $P(s_0) = N_k$, then $P(s_1) = N_k$.  Let $C = (i_0; \ldots; i_p) \in \mathcal{S}$ and $s_0, s_1 \in \mathbb{Q}$ be such that $\overline{U_{s_0}} \cap \set(C) \neq \emptyset$ and $\overline{U_{s_1}} \cap\set(C) \neq \emptyset$ and $P(s_0) = N_{K+1}$.  We'll show $P(s_1) = N_{K+1}$.  If this is not the case then there exist $i_l, i_{l+1} \in \set(C)$ with $i_l \in \overline{U_{s_0'}}$ and $i_{l+1} \in \overline{U_{s_1'}}$ such that either $P(s_0') = N_{K+1}$ and $P(s_1') > N_{K+1}$, or such that $P(s_0') > N_{K+1}$ and $P(s_1') = N_{K+1}$.  Without loss of generality $P(s_0') = N_{K+1}$ and $P(s_1') > N_{K+1}$.  Letting $Y$ be the finite set $\{s \in \mathbb{Q}: P(s) \leq K \wedge \overline{U_s} \subseteq (i_l, i_{l+1})\}$ we have $U \upharpoonright ((i_0, i_1) \setminus \bigcup_{s \in Y} \overline{U_s}) \sim E$ as witnessed by the reduction scheme $\mathcal{S}' = \{C' \in \mathcal{S}: \set(C') \cap (i_l, i_{l+1}) \neq \emptyset \wedge \set(C) \cap \bigcup_{s \in Y}\overline{U_s} = \emptyset\}$ (here we are using the fact that our induction hypothesis implies that $\bigcup_{s \in Y}\overline{U_s} = \bigcup_{C' \in \mathcal{S}, \set(C') \cap \bigcup_{s \in Y}\overline{U_s} \neq \emptyset} \set(C')$).  However the set $M = (i_0, i_1) \cap \{i \in \overline{U}: d((U(i))) = N_{K+1}\} \setminus \bigcup_{s \in Y}\overline{U_s}$ is of odd cardinality, and  we have $U(i) \in \{h_{N_{K+1}}^{R(N_{K+1})}, h_{N_{K+1}}^{-R(N_{K+1})}\}$ for each $i \in M$, and since $h_{N_{K+1}}$ is of infinite order we get in particular that $p_{N_{K+1}}(U \upharpoonright (i_0, i_1) \setminus \bigcup_{s \in Y} \overline{U_s}) = \sum_{i \in M} U(i)$ is not trivial, contradiction.  What we have just shown is that for each $C \in \mathcal{S}$ the function $P\circ Q \upharpoonright \set(C)$ is constant.  We also know that for each $C \in \mathcal{S}$ the function $Q \upharpoonright \set(C)$ is injective, since each $U_s$ is reduced.

Now we make slight modifications to the scheme $\mathcal{S}$.  For $C = (i_0; \ldots; i_k) \in \mathcal{S}$ such that there exists $i \in \set(C)$ with $i \in \overline{U_s}$ and $d(C) = P(s)$, we have $i \in \{\min\overline{U_s}, \max\overline{U_s}\}$ and by the preceeding paragraph we have that each $i_j \in \set(C)$ has some $s_j \in \mathcal{I}$ with $i_j \in \{\min\overline{U_{s_j}}, \max\overline{U_{s_j}}\}$ and $d(C) = P(s_j)$.  More particularly we have that $U(i_j) \in \{h_{d(C)}^{R(d(C))}, h_{d(C)}^{-R(d(C))}\}$ for all $0 \leq j \leq k$.  Since $\pi(U, C) = 0$ we know $U(i_j) = h_{d(C)}^{R(d(C))}$ for some $j$ and $U(i_j) = h_{d(C)}^{-R(d(C))}$ for some other values of $j$.  Then for some $0\leq j' \leq k$ we have $U(i_{j'}) = (U(i_{j' +1}))^{-1}$, and we can replace $C$ in $\mathcal{S}$ with two components $(i_{j'}; i_{j' +1})$, $C' = (i_{0}; \ldots; i_{j'-1}; i_{j' + 2}; \ldots; i_{k})$.  If $|\set(i_{0}, \ldots, i_{j'-1}, i_{j' + 2}, \ldots, i_{k})| > 2$ then performing the same analysis on the finite sequence $(i_{0}; \ldots; i_{j'-1}; i_{j' + 2}; \ldots; i_{k})$ we produce two components $C', C''$ with $|\set(C')| = 2$ and $|\set(C'')|$ being of positive even cardinality.  By performing finitely many steps we determine that we can replace $C$ with $|\set(C)|/2$ components.  Thus we can assume that for each $s \in \mathcal{I}$ we have that $\min \overline{U_s} \in \set(C)$ and $C \in \mathcal{S}$ implies that $|\set(C)| = 2$ and similarly for $\max \overline{U_s}$.

Now, if $s \in \mathcal{I}$, $i = \min\overline{U_s}$, $i \in \set(C)$, $\{i'\} = \set(C) \setminus \{i\}$, with $i' \in \overline{U_{s'}}$ then $i' = \max\overline{U_{s'}}$.  To see this, we know of course that $i' \in \{\min\overline{U_{s'}}, \max\overline{U_{s'}}\}$, and for contradiction if $i' = \min\overline{U_{s'}}$ and say $i < i'$ then the word $V \equiv U \upharpoonright ((i, i') \setminus\bigcup_{s \in \mathcal{I}, P(s) < d(C)} \overline{U_s})$ is not $\sim E$ since $p_{d(C)}(V)$ is $h_{d(C)}$ raised to an odd power, on the other hand $p_{d(C)}(V) \sim E$ by Proposition \ref{reductionscheme} part (1) (using the reduction scheme $\{C' \in \mathcal{S}: \set(C') \cap (i, i') \neq \emptyset \wedge d(C') < d(C)\}$), contradiction.  A similar proof works when $i' < i$.  By similar reasoning if $s \in \mathcal{I}$, $i = \max\overline{U_s}$, $i \in \set(C)$, $\{i'\} = \set(C) \setminus \{i\}$, with $i' \in \overline{U_{s'}}$ then $i' = \min\overline{U_{s'}}$.

We define a collection $\mathcal{P}$ of ordered pairs of elements of $\mathcal{I}$.  Consider a finite sequence $D = (s_0; \ldots; s_k)$ such that 

\begin{center}

$(\max\overline{U_{s_0}}, \min\overline{U_{s_1}}), (\max\overline{U_{s_1}}, \min\overline{U_{s_2}}), \ldots$

$(\max\overline{U_{s_{k-1}}}, \min\overline{U_{s_k}}), (\min\overline{U_{s_0}}, \max\overline{U_{s_k}}) \in \mathcal{S}$

\end{center}

\noindent Since $\bigcup_{s \in \mathcal{I}}\overline{U_s} = I$ and by the arguments above, we know that each element of $\mathcal{I}$ occurs in a unique such finite sequence.  Also for such a $D$ we have $P(s_0) = \cdots = P(s_k)$, and for each $0 \leq j < k$ we have $U_{s_j} \equiv U_{s_{j+1}}^{-1}$ and $A(s_j) = -A(s_{j+1})$, and also $U_{s_0} \equiv U_{s_k}^{-1}$.  We take ordered pairs $(s_0; s_1), \ldots, (s_{k-1}; s_k)$ and let $\mathcal{P}$ be the set of all such ordered pairs for all such sequences $D$.  We observe that

\begin{itemize}

\item $(s; s') \in \mathcal{P}$ implies $s < s'$ in $\mathbb{Q}$;

\item $(s; s') \in \mathcal{P}$ implies $U_{s} \equiv U_{s'}^{-1}$;

\item $(s; s') \in \mathcal{P}$ implies $W_s \equiv W_{s'}^{-1}$;

\item $(\forall s'' \in \mathcal{I})(\exists ! (s; s') \in \mathcal{P}) s'' = s \vee s'' = s'$;

\item for $(s; s'), (s''; s''') \in \mathcal{P}$ such that the intervals $(s, s'), (s'', s''') \subseteq \mathcal{I}$ have nonempty intersection, we have that $(s, s') \subseteq (s'', s''')$ or $(s'', s''') \subseteq (s, s')$.

\end{itemize}

Now we see that the nonempty subword $\prod_{s \in \mathcal{I}} W_s$ of $W$ is $\sim E$, for we can define a reduction scheme $\mathcal{S}''$ on $\prod_{s \in \mathcal{I}} W_s$ by taking for each $(s; s') \in \mathcal{P}$ an order reversing $f_{(s; s')}: \overline{U_s} \rightarrow \overline{U_{s'}}$ such that $U_s(i) = (U_{s'}(f_{(s; s')}(i)))^{-1}$ and letting

\begin{center}

$\mathcal{S}'' = \bigcup_{(s; s') \in \mathcal{P}} \bigcup_{i \in \overline{U_s}} \{(i, f_{(s; s')}(i))\}$.

\end{center}

\noindent Thus $W$ is not reduced, contrary to assumption, a contradiction.

\end{proof}

We next turn our attention to another important fact.

\begin{lemma}\label{notsofast}  If $I \subseteq \overline{U}$ is an interval such that $U \upharpoonright I \in \Fine(\{U_x\}_{x \in X} \cup \{U_m'\}_{m \in \omega})$ then there exists some $s \in \mathbb{Q}$ for which $I \subseteq \overline{U_s}$.
\end{lemma}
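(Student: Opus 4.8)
The argument I would give is by contradiction. Suppose $I\subseteq\overline{U}$ is an interval with $U\upharpoonright I\in\Fine(\{U_x\}_{x\in X}\cup\{U_m'\}_{m\in\omega})$ but $I\not\subseteq\overline{U_s}$ for every $s\in\mathbb{Q}$. Then $I$ meets two distinct blocks $\overline{U_{s_0}},\overline{U_{s_1}}$ with $s_0<s_1$, and since $I$ is an interval it contains $\overline{U_s}$ for every $s$ strictly between $s_0$ and $s_1$; as $\mathbb{Q}$ is order dense, the set $\Lambda=\{s\in\mathbb{Q}:\overline{U_s}\subseteq I\}$ is an infinite interval of $\mathbb{Q}$. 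Because $\Fine(\{U_x\}_{x\in X}\cup\{U_m'\}_{m\in\omega})$ is closed under passage to subwords, $U\upharpoonright\bigcup_{s\in\Lambda}\overline{U_s}$ again lies in this subgroup; replacing $I$ by $\bigcup_{s\in\Lambda}\overline{U_s}$, we may thus assume $I=\bigcup_{s\in\Lambda}\overline{U_s}$ with $\Lambda$ an infinite interval of $\mathbb{Q}$. Write $\mathcal{V}=\Sub(\{U_x\}_{x\in X}\cup\{U_x^{-1}\}_{x\in X}\cup\{U_m'\}_{m\in\omega}\cup\{(U_m')^{-1}\}_{m\in\omega})$ for brevity.

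By Lemma \ref{Qwordreduced} the word $U$, hence $U\upharpoonright I$, is reduced, so Lemma \ref{nicefine} yields a decomposition $U\upharpoonright I\equiv V_0V_1\cdots V_p$ in which the $\overline{V_l}$ are consecutive intervals tiling $I$ and each $V_l$ is either a single letter or an element of $\mathcal{V}$. There are only $p$ seams between consecutive blocks $\overline{V_l}$, and since $\mathbb{Q}$ has no pair of adjacent elements, two distinct $\overline{U_s}$ cannot both straddle one seam (they are disjoint intervals, so the one that lies below the seam and the one that lies above it would have to overlap). Hence all but at most $p$ of the $s\in\Lambda$ have $\overline{U_s}$ contained in a single block $\overline{V_{l(s)}}$, and $s\mapsto l(s)$ is monotone where defined, so (a routine order argument: a monotone map from a dense interval of $\mathbb{Q}$ to a finite set is constant on a subinterval) there are an index $l^{*}$ and an infinite subinterval $\Lambda''\subseteq\Lambda$ with $\overline{U_s}\subseteq\overline{V_{l^{*}}}$ for all $s\in\Lambda''$. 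Then $\bigcup_{s\in\Lambda''}\overline{U_s}\subseteq\overline{V_{l^{*}}}$ is infinite, so $V_{l^{*}}$ is not a single letter; therefore $V_{l^{*}}\in\mathcal{V}$, and its subword $U\upharpoonright\bigcup_{s\in\Lambda''}\overline{U_s}$ lies in $\mathcal{V}$ as well.

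Finally, choose $j'\in\omega$ with $J_{j'}\subseteq\Lambda''$ — possible because an infinite interval of $\mathbb{Q}$ contains a nonempty open interval with rational endpoints. Then $V:=U\upharpoonright\bigcup_{s\in J_{j'}}\overline{U_s}$ is a subword of $U\upharpoonright\bigcup_{s\in\Lambda''}\overline{U_s}$, hence $V\in\mathcal{V}$, and $\overline{V}=\overline{\prod_{s\in J_{j'}}U_s}$, so the identity order isomorphism $\iota$ trivially satisfies $d\circ V\circ\iota=d\circ U\upharpoonright\overline{\prod_{s\in J_{j'}}U_s}$. Moreover, for each $k\in\omega$, since $U_{v_{1,j'}(k)}$ begins with the letter $h_{P(v_{1,j'}(k))}^{A(v_{1,j'}(k))R(P(v_{1,j'}(k)))}$ at position $\min\overline{U_{v_{1,j'}(k)}}=f_{1,j'}(k)$, and since $P(v_{1,j'}(k))=v_{0,j'}(k)$ and $R(v_{0,j'}(k))=q_{j'}(f_{1,j'}(k))$ by construction, we get $V(\iota(f_{1,j'}(k)))=U(f_{1,j'}(k))=h_{v_{0,j'}(k)}^{A(v_{1,j'}(k))q_{j'}(f_{1,j'}(k))}$. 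This is exactly the configuration that the choice of $q_{j'}$ via Lemma \ref{newword2} was made to rule out, a contradiction, which completes the proof.

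The main obstacle is the middle step: extracting from a merely \emph{finitary} $\Fine$-decomposition of $U\upharpoonright I$ a single honest subword that already covers an entire $\mathbb{Q}$-block of the $U_s$'s. The two features of $\mathbb{Q}$ that make this work — no adjacent pairs (so each seam of the decomposition lies inside at most one $\overline{U_s}$) and monotone-into-a-finite-set being constant on a subinterval — are precisely what bridges the gap to the diagonalization already hard-wired into the $q_j$'s; everything else is bookkeeping with the definitions of $P$, $A$, $R$, $f_{1,j}$, $v_{0,j}$, and $v_{1,j}$.
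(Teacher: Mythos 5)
Your proof is correct and follows essentially the same route as the paper's: decompose $U\upharpoonright I$ by Lemma \ref{nicefine}, use the density of $\mathbb{Q}$ and the pigeonhole principle to find a single piece of the decomposition that contains $\overline{\prod_{s\in J_{j'}}U_s}$ for some $j'$, and contradict the diagonal choice of $q_{j'}$ via Lemma \ref{newword2}. The only difference is cosmetic: you make the pigeonhole step more explicit through the seam-counting and monotonicity remarks, whereas the paper compresses that into a single sentence.
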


\begin{proof}  If the hypothesis holds but the conclusion fails, then since $\mathbb{Q}$ is order dense there exists an interval $I' \subseteq I$ and infinite interval $\mathcal{I}' \subseteq \mathbb{Q}$ such that $I' = \prod_{s \in \mathcal{I}'} \overline{U_s}$.  Since $U \upharpoonright I \in \Fine(\{U_x\}_{x \in X} \cup \{U_m'\}_{m \in \omega})$, we also have $U \upharpoonright I' \in \Fine(\{U_x\}_{x \in X} \cup \{U_m'\}_{m \in \omega})$.  By Lemma \ref{nicefine} we can write $U \upharpoonright I'$ as a finite concatenation

\begin{center}

$U \upharpoonright I' \equiv V_0V_1\cdots V_k$

\end{center}

\noindent where for $0 \leq r \leq k$ we have $|\overline{V_r}| =1$ or 

\begin{center}

$V_r \in \Sub(\{U_x\}_{x \in X} \cup \{U_x^{-1}\}_{x \in X} \cup \{U_m'\}_{m \in \omega} \cup \{(U_m')^{-1}\}_{m \in \omega})$.

\end{center}

\noindent Then there exists $0 \leq l \leq k$ and nonempty subinterval $\mathcal{I}'' \subseteq \mathcal{I}'$, having rational endpoints $\mathcal{I}'' = (s, s')$, such that $\overline{V_l} \supseteq I' \supseteq \overline{\prod_{s \in \mathcal{I}''} U_s}$, so in particular $$\prod_{s \in \mathcal{I}''} U_s \in \Sub(\{U_x\}_{x \in X} \cup \{U_x^{-1}\}_{x \in X} \cup \{U_m'\}_{m \in \omega} \cup \{(U_m')^{-1}\}_{m \in \omega}).$$  However letting $(s, s') = J_j$ we have contradicted our selection criterion for the function $q_j$.
\end{proof}

Now we define the coi from $W$ to $U$ and verify the various properties.  For each $s \in \mathbb{Q}$ we let $L_s: \overline{h_{P(s)}^{A(s)R(P(s))}U_{P(s)}^{A(s)}h_{P(s)}^{A(s)R(P(s))}} \rightarrow \overline{U_s}$ witness $$h_{P(s)}^{A(s)R(P(s))}U_{P(s)}^{A(s)}h_{P(s)}^{A(s)R(P(s))} \equiv U_s$$ and let $L_s': \overline{W_{P(s)}^{A(s)}}\rightarrow I_s$ witness $W_{P(s)}^{A(s)} \equiv W \upharpoonright I_s$.  For each $s \in \mathbb{Q}$ define function $\iota_s$ by having $\dom(\iota_s) = L_s'(\dom(\iota_{P(s)})) \subseteq I_s$, $\im(\iota_s) = L_s(\im(\iota_{P(s)}))  \subseteq \overline{U_s} \setminus \{\min(\overline{U_s}), \max(\overline{U_s})\}$ and $\iota_s(i) = L_s \circ \iota_{P(s)}\circ (L_s')^{-1}(i)$.  Thus $\iota_s$ is an order-preserving bijection (if $A(s) = -1$ then the definition of $\iota_s$ is a composition of three functions, the first and the last are order-reversing and the middle is order-preserving).  It is also clear that $\Close(\dom(\iota_s),  I_s)$ and $\Close(\im(\iota_s), \overline{U_s})$.  Define a function $\iota$ by letting $\iota = \bigcup_{s \in \mathbb{Q}} \iota_s$.  We have that $\dom(\iota) = \bigcup_{s \in \mathbb{Q}} \dom(\iota_s)$ and $\im(\iota) = \bigcup_{s \in \mathbb{Q}}\im(\iota_s)$, and $\iota$ is an order isomorphism between its domain and range.  Moreover we have $\Close(\dom(\iota), \overline{W})$ and $\Close(\im(\iota), \overline{U})$ by Lemma \ref{basiccloseproperties} (iii).  Therefore $\iota$ is a coi from $W$ to $U$.

\begin{lemma}\label{itworksforQ}  The collection $$\{\coi(W_x, \iota_x, U_x)\}_{x \in X} \cup \{\coi(W_m, \iota_m, U_m')\}_{m \in \omega} \cup \{\coi(W, \iota, U)\}$$ is coherent.  Thus, more particularly $\{\coi(W_x, \iota_x, U_x)\}_{x \in X} \cup \{\coi(W, \iota, U)\}$ is coherent.
\end{lemma}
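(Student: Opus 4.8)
The plan is to verify the two families of defining conditions for coherence of the enlarged collection. Since the subcollection $\{\coi(W_x,\iota_x,U_x)\}_{x\in X}\cup\{\coi(W_m,\iota_m,U_m')\}_{m\in\omega}$ is already coherent (Lemma \ref{ascendingchaincoi}), and since a subcollection of a coherent collection is coherent, it suffices to check the conditions that mention the new triple $\coi(W,\iota,U)$; by the symmetry built into the definition these reduce to the ``$W$-side'' conditions $W\upharpoonright I\equiv(W_y\upharpoonright I')^{\delta}$ with $y\in X\cup\omega$, and $W\upharpoonright I\equiv(W\upharpoonright I')^{\delta}$, together with their ``$U$-side'' mirrors obtained by replacing $W,W_y,\iota$ with $U,U_y,\iota^{-1}$. (Here $W_y$ abbreviates $W_x$ when $y=x\in X$ and $W_m$ when $y=m\in\omega$; likewise $U_y$.) Establishing these gives coherence of the full collection, and in particular of $\{\coi(W_x,\iota_x,U_x)\}_{x\in X}\cup\{\coi(W,\iota,U)\}$.

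Two observations organize the verification. The \emph{block dichotomy}: an interval $I\subseteq\overline W$ satisfies $W\upharpoonright I\in\Fine(\{W_x\}_{x\in X})$ if and only if $I\subseteq I_s$ for some (then unique) $s\in\mathbb Q$ --- the nontrivial direction uses the hypothesis that $W\upharpoonright\bigcup_{s\in\Lambda}I_s\notin\Fine(\{W_x\}_{x\in X})$ whenever $|\Lambda|>1$, together with the density of $\mathbb Q$ and the closure of $\Fine$ under passing to subwords. On the $U$-side the analogue is Lemma \ref{notsofast}, and a short argument (using that block endpoints have no immediate neighbours in $\prod_{s\in\mathbb Q}\overline{U_s}$) shows more generally that if $U\upharpoonright I$ is $\equiv$ to an element of $\Sub(\{U_x\}_{x\in X}\cup\{U_x^{-1}\}_{x\in X}\cup\{U_m'\}_{m\in\omega}\cup\{(U_m')^{-1}\}_{m\in\omega})$ then $I\subseteq\overline{U_s}$ for some $s$. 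The \emph{locality of $\iota$}: on each block $I_s$ the map $\iota$ is, up to the order isomorphisms $L_s,L_s'$ and the sign $A(s)$, a copy of $\iota_{P(s)}$ landing in $\overline{U_s'}\subseteq\overline{U_s}$, so that for an interval $Q\subseteq I_s$ one has $U\upharpoonright\varpropto(Q,\iota)\equiv\big(U_{P(s)}'\upharpoonright\varpropto(Q^{\ast},\iota_{P(s)})\big)^{A(s)}$, where $Q^{\ast}\subseteq\overline{W_{P(s)}}$ is the interval corresponding to $Q$.

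The ``mixed'' conditions and the ``small'' self-pairings go through by reduction to the scaffolding collection. If $W\upharpoonright I\equiv(W_y\upharpoonright I')^{\delta}$ with $y\in X\cup\omega$, then $W_y\upharpoonright I'$ --- hence $W\upharpoonright I$ --- lies in $\Fine(\{W_x\}_{x\in X})$, so $I\subseteq I_s$ by the dichotomy; transporting the identity through $L_s,L_s'$ turns it into one among the $W_m$'s and the $X$-words, to which coherence of the scaffolding applies, and pushing the resulting $[[\cdot]]$-equality back through the block maps --- absorbing the finitely many extra letters (the $h$-powers, and the finite pieces supplied by Lemmas \ref{coilemma} and \ref{coilemma2}) via Observation \ref{finitedeletions} --- yields $[[U\upharpoonright\varpropto(I,\iota)]]=[[(U_y\upharpoonright\varpropto(I',\iota_y))^{\delta}]]$. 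The self-pairing $W\upharpoonright I\equiv(W\upharpoonright I')^{\delta}$ with both restrictions in $\Fine(\{W_x\}_{x\in X})$ (equivalently $I\subseteq I_s$ and $I'\subseteq I_{s'}$) is handled identically, and the $U$-side versions of these cases use Lemma \ref{notsofast} in place of the dichotomy.

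The substantive case is a self-pairing in which $I$, and hence $I'$, meets at least two blocks. The heart of it is that the witnessing order isomorphism $g$ is \emph{essentially block-graded}: if $g$ carried a block $I_s$ wholly contained in $I$ properly into a block $I_t$ of $I'$ that is itself wholly contained in $I'$, then by density $g$ would carry a union of more than one block of $I$ into $I_t$; but such a union escapes $\Fine(\{W_x\}_{x\in X})$ by hypothesis, while its $g$-image is a subword of the block-restriction $W\upharpoonright I_t\in\Fine(\{W_x\}_{x\in X})$ --- impossible, since $\Fine$ is closed under subwords and inverses. Hence, outside finitely many exceptional blocks on each side, $g$ matches the wholly-contained blocks of $I$ order-compatibly onto those of $I'$, and --- because the $W_m$ are pairwise distinct modulo inverse while the $G_n$ have no involutions --- a matched pair $I_s\leftrightarrow I_t$ forces $P(t)=P(s)$ and $A(t)=\delta A(s)$. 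Granting this, one writes $\varpropto(I,\iota)$ and $\varpropto(I',\iota)$ each as a three-term concatenation ``left end $\cdot$ interior $\cdot$ right end'': the interior parts are literally $\equiv$ up to the exponent $\delta$ (matched blocks carry equal $P$-data and, after the $\delta$, equal $A$-data, so the corresponding stretches of $U$, $h$-separators included, coincide), while the finitely many end and exceptional contributions are single-block $\Fine$-restrictions whose $[[\cdot]]$-classes are matched by scaffolding coherence; Observation \ref{finitedeletions} then delivers the required equality. The $U$-side self-pairing runs along the same lines, with the reducedness of $U$ (Lemma \ref{Qwordreduced}) and the separating role of the exponents $R$ --- secured through Lemmas \ref{newword2} and \ref{notsofast} --- replacing the hypothesis on $W$, and with the argument carried out using the middle stretches $\overline{U_s'}$ of the blocks so that only Lemma \ref{notsofast} as stated is invoked; the bookkeeping is heavier because the $h$-separators are woven through the blocks of $U$, but the mechanism is identical. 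This block-gradedness step for the two self-pairings is the main obstacle; once it is in hand, everything else reduces to the coherence of the scaffolding together with the finite-discrepancy bookkeeping used above.
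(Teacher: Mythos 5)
Your proposal is correct and follows essentially the same route as the paper's proof: establish the block dichotomy (on the $W$-side directly, on the $U$-side via Lemma \ref{notsofast}), dispatch the mixed and single-block cases by reducing to coherence of the scaffolding $\{\coi(W_x, \iota_x, U_x)\}_{x\in X} \cup \{\coi(W_m, \iota_m, U_m')\}_{m\in\omega}$, and for a self-pairing spanning many blocks show the witnessing order isomorphism is block-graded (the paper packages this as the bijection $F\colon\mathcal{I}\to\mathcal{I}'$) so that the interior stretches of $U$ agree literally up to $\delta$ and the at-most-two end blocks contribute only finitely many discrepancies absorbed by Observation \ref{finitedeletions}. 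Your use of the no-involutions hypothesis to pin down $P$ and $A$ on matched blocks, and your appeal to $R$ and Lemmas \ref{newword2}, \ref{notsofast}, \ref{Qwordreduced} on the $U$-side, match the paper's (equally terse) treatment of that side.
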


\begin{proof}  Suppose that we have an $x \in X \cup \omega$, intervals $I \subseteq \overline{W}$ and $I' \subseteq \overline{W_x}$ and $\delta \in \{-1, 1\}$ such that $W \upharpoonright I \equiv (W_x \upharpoonright I')^{\delta}$.  We know that $W_x \in \Fine(\{W_x\}_{x \in X} \cup \{W_n\}_{n \in \omega}) = \Fine(\{W_x\}_{x \in X})$, and therefore also we know that $(W_x \upharpoonright I')^{\delta} \in \Fine(\{W_x\}_{x \in X})$.  By our assumptions on the word $W$ we therefore have some $s \in \mathbb{Q}$ such that $I \subseteq I_s$.  Let $f: I_s \rightarrow \overline{(W_{P(s)})^{A(s)}}$ witness that $W \upharpoonright I_s \equiv W_{P(s)}^{A(s)}$.  By the coherence of $$\{\coi(W_x, \iota_x, U_x)\}_{x \in X} \cup \{\coi(W_m, \iota_m, U_m')\}_{m \in \omega}$$ we have that

$$
\begin{array}{ll}
[[U \upharpoonright \alpha(I, \iota)]] & = [[U_s \upharpoonright \alpha(I, \iota_s)]]\\
& = [[(U_{P(s)} \upharpoonright \alpha(f(I), \iota_{P(s)}))^{A(s)}]]\\
& = [[(U_x \upharpoonright \alpha(I', \iota_x))^{\delta}]]

\end{array}
$$

\noindent where the first equality holds because $U \upharpoonright \alpha(I, \iota) \equiv U_s \upharpoonright \alpha(I, \iota_s)$, the second holds since $U_s \upharpoonright \alpha(I, \iota_s) \equiv (U_{P(s)} \upharpoonright \alpha(f(I), \iota_{P(s)}))^{A(s)}$ (because of how the function $\iota_s$ is defined), and the third equality holds because the collection $$\{\coi(W_x, \iota_x, U_x)\}_{x \in X} \cup \{\coi(W_m, \iota_m, U_m')\}_{m \in \omega}$$ is coherent.

Next, we suppose that $I, I' \subseteq \overline{W}$ are intervals and $\delta \in \{-1, 1\}$ is such that $W \upharpoonright I \equiv (W \upharpoonright I')^{\delta}$.  We'll only consider the case where $\delta = -1$, as the other case is much more straightforward.  Let $f: I \rightarrow I'$ be an order-reversing bijection witnessing that $W \upharpoonright I \equiv (W \upharpoonright I')^{-1}$, so that $W(i) = (W(f(i)))^{-1}$.  Notice that $f$ takes subintervals of $I$ to subintervals of $I'$, while reversing the order of the points.  However, for a subinterval $I'' \subseteq I$ we consider $f(I'') \subseteq I'$ to have the order inherited from $I'$ and from $\overline{W}$, so we may make sense of the expression $W \upharpoonright f(I'')$.  Thus for each subinterval $I'' \subseteq I$ we have $W \upharpoonright I'' \equiv (W \upharpoonright f(I''))^{-1}$.  We notice that for each subinterval $I'' \subseteq I$ we have $W \upharpoonright I'' \in \Fine(\{W_x\}_{x \in X})$ if and only if $W \upharpoonright f(I'')\in \Fine(\{W_x\}_{x \in X})$ if and only if $(W \upharpoonright f(I''))^{-1} \in \Fine(\{W_x\}_{x \in X})$.

Let $\mathcal{I}$ be the interval in $\mathbb{Q}$ defined by $\mathcal{I} = \{s \in \mathbb{Q}: I_s \cap I \neq \emptyset\}$ and similarly define another interval $\mathcal{I}' = \{s \in \mathbb{Q}: I_s \cap I' \neq \emptyset\}$.  By our assumptions on the intervals $I_s$, we know that a subinterval $I'' \subseteq I$ is a subinterval of one of the $I_s$ if and only if $W \upharpoonright I'' \in \Fine(\{W_x\}_{x \in X})$, and so $f(I'')$ is a subinterval of one of the $I_s$ if and only if $W \upharpoonright I'' \in \Fine(\{W_x\}_{x \in X})$ if and only if $W \upharpoonright f(I'') \in \Fine(\{W_x\}_{x \in X})$.  Thus we have an order-reversing bijection $F: \mathcal{I} \rightarrow \mathcal{I}'$ given by $F(s) = s'$ where $f(I \cap I_s) \cap I_{s'} \neq \emptyset$.  Therefore $\mathcal{I}$ has a maximum if and only if $\mathcal{I}'$ has a minimum, and $\mathcal{I}$ has a minimum if and only if $\mathcal{I}'$ has a maximum.

Now we consider various cases.  If $\mathcal{I} = \emptyset = \mathcal{I}'$ then both $I$ and $I'$ are empty, and we have $[[U \upharpoonright\alpha(I, \iota)]] = [[E]] = [[(U \upharpoonright\alpha(I', \iota))^{-1}]]$.  If both $\mathcal{I}$ and $\mathcal{I}'$ are of cardinality $1$ then we let $\{s\} = \mathcal{I}$ and $\{s'\} = \mathcal{I}'$ and $L: I_s \rightarrow \overline{W_{(P(s))}^{A(s)}}$ and $L': I_{s'} \rightarrow \overline{W_{(P(s'))}^{A(s')}}$ witness that $W \upharpoonright I_s \equiv W_{(P(s))}^{A(s)}$ and $W\upharpoonright I_{s'} \equiv W_{(P(s'))}^{A(s')}$.  We have

$$
\begin{array}{ll}
[[U \upharpoonright \alpha(I, \iota)]] & = [[U_s \upharpoonright \alpha(I, \iota_s)]]\\
& = [[(U_{P(s)}' \upharpoonright \alpha(L(I), \iota_{P(s)}))^{A(s)}]]\\
& = [[(U_{P(s')}' \upharpoonright \alpha(L'(I'), \iota_{P(s')}))^{-A(s')}]]\\
& = [[(U \upharpoonright \alpha(I', \iota_{s'}))^{-1}]]\\
& = [[(U \upharpoonright \alpha(I', \iota))^{-1}]]

\end{array}
$$

\noindent where the first and last equalities hold by the fact that $\iota \upharpoonright (\dom(\iota) \cap \overline{U_s''}) = \iota_{s''}$ for all $s'' \in \mathbb{Q}$, the second and fourth equalities hold by how the functions $\iota_s$ and $\iota_{s'}$ are defined, and the third equality holds by the fact that the subcollection $\{\coi(W_m, \iota_m, U_m')\}_{m \in \omega}$ is coherent.

If both $\mathcal{I}$ and $\mathcal{I}'$ have at least two points then they are infinite (since $\mathbb{Q}$ is order-dense).  We'll imagine that $\mathcal{I}$ contains a maximum (and so $\mathcal{I}'$ contains a minimum) and that $\mathcal{I}$ contains a minimum (and so $\mathcal{I}'$ contains a maximum), and in case a maximum or a minimum does not exist then the modifications are obvious.  Let $s_0 = \min\mathcal{I}$, $s_1 = \max\mathcal{I}$ and $(\mathcal{I})^* = \mathcal{I} \setminus \{s_0, s_1\}$ and $(\mathcal{I}')^* = \mathcal{I}' \setminus \{F(s_1), F(s_0)\}$.  We have that $W \upharpoonright I_s \equiv (W \upharpoonright I_{F(s)})^{-1}$ for each $s \in (\mathcal{I})^*$, as witnessed by $f$, but the equivalence may fail for $s = s_0$ and $s = s_1$.  Let

\begin{itemize}
\item $I_0 = I \cap I_{s_0}$

\item $I_1 = I \cap I_{s_1}$

\item $I_2 = \alpha(I, \iota) \cap \overline{U_{s_0}}$

\item $I_3 = \alpha(I, \iota) \cap \overline{U_{s_1}}$

\item $I_4 = I' \cap I_{F(s_1)}$

\item $I_5 = I' \cap I_{F(s_0)}$

\item $I_6 = \alpha(I', \iota) \cap \overline{U_{F(s_1)}}$

\item $I_7 = \alpha(I', \iota) \cap \overline{U_{F(s_0)}}$.

\end{itemize}

\noindent We notice that $\alpha(I_0, \iota)$ is obtained from $I_2$ by deleting a finite terminal subinterval, and possibly also a finite initial interval, so in particular $[[U_{s_0} \upharpoonright I_2]] = [[U_{s_0} \upharpoonright \alpha(I_0, \iota)]]$, and by similar reasoning we may write $[[U_{s_1} \upharpoonright I_3]] = [[U_{s_1} \upharpoonright \alpha(I_1, \iota)]]$, $[[U_{F(s_1)} \upharpoonright I_6]] = [[U_{F(s_1)} \upharpoonright \alpha(I_4, \iota)]]$, $[[U_{F(s_0)}\upharpoonright I_7]]=[[U_{F(s_0)}\upharpoonright \alpha(I_5, \iota)]]$.  Moreover it is the case that $$[[U_{s_0} \upharpoonright \alpha(I_0, \iota)]] = [[(U_{F(s_0)} \upharpoonright \alpha(I_5, \iota))^{-1}]]$$ and $$[[U_{s_1}\upharpoonright \alpha(I_1, \iota)]] = [[(U_{F(s_1)}\upharpoonright \alpha(I_4, \iota))^{-1}]]$$ since $\{\coi(W_m, \iota_m, U_m')\}_{m \in \omega}$ is coherent.  Thus we have

$$
\begin{array}{ll}
[[U\upharpoonright \alpha(I, \iota)]] & = [[(U_{s_0} \upharpoonright I_2)(\prod_{s \in (\mathcal{I})^*}U_s)(U_{s_1}\upharpoonright I_3)]]\\
& = [[U_{s_0} \upharpoonright I_2]][[\prod_{s \in (\mathcal{I})^*}U_s]][[U_{s_1}\upharpoonright I_3]]\\
& = [[U_{s_0} \upharpoonright \alpha(I_0, \iota)]][[\prod_{s \in (\mathcal{I})^*}U_s]][[U_{s_1}\upharpoonright \alpha(I_1, \iota)]]\\
& = [[U_{s_0} \upharpoonright \alpha(I_0, \iota)]][[\prod_{s \in (\mathcal{I})^*}U_{F(s)}^{-1}]][[U_{s_1}\upharpoonright \alpha(I_1, \iota)]]\\
& = [[(U_{F(s_0)} \upharpoonright \alpha(I_5, \iota))^{-1}]][[(\prod_{s \in (\mathcal{I}')^*}U_s)^{-1}]][[(U_{F(s_1)}\upharpoonright \alpha(I_4, \iota))^{-1}]]\\
& = [[(U_{F(s_0)} \upharpoonright I_7)^{-1}]][[(\prod_{s \in (\mathcal{I}')^*}U_s)^{-1}]][[(U_{F(s_1)}\upharpoonright I_6)^{-1}]]\\
& = [[(U_{F(s_0)} \upharpoonright I_7)^{-1}(\prod_{s \in (\mathcal{I}')^*}U_s)^{-1}(U_{F(s_1)}\upharpoonright I_6)^{-1}]]\\
& = [[((U_{F(s_1)}\upharpoonright I_6)(\prod_{s \in (\mathcal{I}')^*}U_s)(U_{F(s_0)} \upharpoonright I_7))^{-1}]]\\
& = [[(U \upharpoonright \alpha(I', \iota))^{-1}]]

\end{array}
$$

\noindent The cases where $\min\mathcal{I}$ and/or $\max\mathcal{I}$ are considered using obvious modifications, and that where $\delta = 1$ is even more straightforward.

Next we suppose $z \in X \cup \omega$ and $I \subseteq \overline{U}$, $I' \subseteq \overline{U_z}$ and $\epsilon \in \{-1, 1\}$ are such that $U \upharpoonright I \equiv (U_z \upharpoonright I')^{\epsilon}$.  We must show that $[[W \alpha(I, \iota^{-1})]] = [[(W_z \alpha(I', \iota^{-1}))^{\epsilon}]]$.  We know by Lemma \ref{notsofast} that there exists $s \in \mathbb{Q}$ such that $I \subseteq \overline{U_s}$.  Define $$I'' \subseteq \overline{h_{P(s)}^{A(s)R(P(s))}U_{P(s)}^{A(s)}h_{P(s)}^{A(s)R(P(s))}}$$ by $I'' = L_s^{-1}(I)$  By how $\iota$ is defined, and as $I \subseteq \overline{U_s}$ we have

$$
\begin{array}{ll}
W\upharpoonright \alpha(I, \iota^{-1}) & \equiv W\upharpoonright \alpha(I \cap \overline{U_s}, \iota^{-1})\\
& \equiv W_{P(s)}^{A(s)} \upharpoonright \alpha(I'', \iota_{P(s)}^{-1})

\end{array}
$$

\noindent and as $\{\coi(W_x, \iota_x, U_x)\}_{x \in X} \cup \{\coi(W_n, \iota_m, U_m)\}_{m \in \omega}$ is coherent we have $[[W_{P(s)}^{A(s)} \upharpoonright \alpha(I'', \iota_{P(s)}^{-1})]] = [[(W_z \upharpoonright \alpha(I', \iota_z^{-1}))^{\epsilon}]]$, so that in fact $[[W \upharpoonright \alpha(I, \iota^{-1})]] = [[(W \upharpoonright \alpha(I', \iota_z^{-1}))^{\epsilon}]]$ as required.

Finally suppose that intervals $I, I' \subseteq \overline{U}$ and $\epsilon \in \{-1, 1\}$ are such that $U \upharpoonright I \equiv (U \upharpoonright I')^{\epsilon}$.  We prove the difficult case where $\epsilon = -1$, and the case $\epsilon = 1$ is left to the reader.  Take $f: I \rightarrow I'$ to be an order-reversing bijection witnessing $U \upharpoonright I \equiv (U \upharpoonright I')^{\epsilon}$, so $U(i) = (U(f(i)))^{-1}$.  Such an $f$ takes subintervals of $I$ to subintervals of $I'$.  As before, for a subinterval $I'' \subseteq I$ we have $U \upharpoonright I'' \in \Fine(\{U_x\}_{x \in X} \cup \{U_m\}_{m \in \omega})$ if and only if $U \upharpoonright f(I'') \in \Fine(\{U_x\}_{x \in X} \cup \{U_m\}_{m \in \omega})$.

Let $\mathcal{I}$ be the interval in $\mathbb{Q}$ defined by $\mathcal{I} = \{s \in \mathbb{Q} \mid \overline{U_s} \cap I \neq \emptyset\}$ and similarly $\mathcal{I}' = \{s \in \mathbb{Q} \mid \overline{U_s} \cap I' \neq \emptyset\}$.  Using Lemma \ref{notsofast} one can argue as before that $f$ induces an order-reversing bijection $F: \mathcal{I} \rightarrow \mathcal{I}'$ (where $F(s) = s'$ means that $f(I \cap \overline{U_s}) \cap \overline{U_{s'}} \neq \emptyset$).  If both $\mathcal{I}$ and $\mathcal{I}'$ are of cardinality $1$ then $I \subseteq \overline{U_s}$ and $I' \subseteq \overline{U_{F(s)}}$ for some $s \in \mathbb{Q}$ and $[[W \upharpoonright \alpha(I, \iota^{-1})]] = [[(W \upharpoonright \alpha(I', \iota^{-1}))^{\epsilon}]]$ simply because $\{\coi(W_x, \iota_x, U_x)\}_{x \in X} \cup \{\coi(W_m, \iota_m, U_m)\}_{m \in \omega}$ is coherent.  In case $\mathcal{I}$ and $\mathcal{I}'$ are both empty we have $I = \emptyset = I'$ and $[[W \upharpoonright \alpha(I, \iota^{-1})]] = [[E]] = [[(W \upharpoonright \alpha(I', \iota^{-1}))^{\epsilon}]]$.  If both $\mathcal{I}$ and $\mathcal{I}'$ have at least $2$ points then they are infinite.  Defining $(\mathcal{I})^*$ to be $\mathcal{I}$ minus any minimum or maximum, and define $(\mathcal{I}')^*$ similarly.  Then $U_s \equiv (U_{F(s)})^{-1}$ for $s \in (\mathcal{I})^*$ and $U \upharpoonright (\overline{U_{\min(\mathcal{I})}} \cap I) \equiv (U \upharpoonright (\overline{U_{\max(\mathcal{I}')}} \cap I'))^{\epsilon}$ (provided $\min(\mathcal{I})$ exists) and $U \upharpoonright (\overline{U_{\max(\mathcal{I})}} \cap I) \equiv (U \upharpoonright (\overline{U_{\min(\mathcal{I}')}} \cap I'))^{\epsilon}$ (provided $\max(\mathcal{I})$ exists).

If, for example, both $\max(\mathcal{I})$ and $\min(\mathcal{I})$ exist we get that $[[W \upharpoonright \alpha(\overline{U_{\min(\mathcal{I})}} \cap I, \iota^{-1})]] = [[(W \upharpoonright \alpha(\overline{U_{\max(\mathcal{I}')}} \cap I', \iota^{-1}))^{\epsilon}]]$ and $[[W \upharpoonright \alpha(\overline{U_{\max(\mathcal{I})}} \cap I, \iota^{-1})]] = [[(W \upharpoonright \alpha(\overline{U_{\min(\mathcal{I}')}} \cap I', \iota^{-1}))^{\epsilon}]]$ since $\{\coi(W_x, \iota_x, U_x)\}_{x \in X} \cup \{\coi(W_m, \iota_m, U_m)\}_{m \in \omega}$ is coherent.  Moreover $W \upharpoonright I_s \equiv (W \upharpoonright I_{F(s)})^{-1}$ for each $s \in (\mathcal{I})^*$ by how $U$ was constructed, and so

$$
\begin{array}{ll}
[[W \upharpoonright \alpha(I, \iota^{-1})]] & = [[W \upharpoonright \alpha(\overline{U_{\min(\mathcal{I})}} \cap I, \iota^{-1})]] \\

& \cdot [[W \upharpoonright \alpha(\bigcup_{s \in (\mathcal{I})^*}\overline{U_s}, \iota^{-1})]][[W \upharpoonright \alpha(\overline{U_{\max(\mathcal{I})}} \cap I, \iota^{-1})]]\\
& = [[(W \upharpoonright \alpha(\overline{U_{\max(\mathcal{I}')}} \cap I', \iota^{-1}))^{\epsilon}]][[(W \upharpoonright \alpha(\bigcup_{s \in (\mathcal{I}')^*}\overline{U_s}, \iota^{-1}))^{\epsilon}]]\\
& \cdot[[(W \upharpoonright \alpha(\overline{U_{\min(\mathcal{I}')}} \cap I', \iota^{-1}))^{\epsilon}]]\\
& = [[(W \upharpoonright \alpha(I', \iota^{-1}))^{\epsilon}]]

\end{array}
$$

\noindent where the first and last equalities are by Lemmas \ref{coilemma} and \ref{coilemma2}.  If $\max(\mathcal{I})$ or $\min(\mathcal{I})$ do not exist then the modifications are obvious.  The proof of this lemma is finished.

\end{proof}

Claim (1) is now seen to be true from Lemma \ref{itworksforQ}.  The proof of claim (2) is totally analogous, and so the proof of the proposition is complete.

\end{section}

\begin{section}{Conclusion of the proof}\label{Finally}

We are now armed to give the finishing arguments of the proof.  We begin with the following.

\begin{proposition}\label{generalextension}  Let $\{G_n\}_{n \in \omega}$ and $\{K_n\}_{n \in \omega}$ be sequences of groups, each having an element of infinite order and no elements of order $2$.  Suppose that $\{\coi(W_x, \iota, U_x)\}_{x \in X}$ is a coherent collection of coi from $\Red(\{G_n\}_{n \omega})$ to $\Red(\{K_n\}_{n \omega})$ such that $|X| < 2^{\aleph_0}$.

\begin{enumerate}

\item If $W \in \Red(\{G_n\}_{n \omega})$ then there exists a $U \in \Red(\{K_n\}_{n \omega})$ and coi $\iota$ from $W$ to $U$ such that $\{\coi(W_x, \iota, U_x)\}_{x \in X} \cup \{\coi(W, \iota, U)\}$ is coherent.

\item If $U \in \Red(\{K_n\}_{n \omega})$ then there exists a $W \in \Red(\{G_n\}_{n \omega})$ and coi $\iota$ from $W$ to $U$ such that $\{\coi(W_x, \iota, U_x)\}_{x \in X} \cup \{\coi(W, \iota, U)\}$ is coherent.

\end{enumerate}

\end{proposition}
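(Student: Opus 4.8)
The plan is to establish (1); part (2) is proved by the symmetric argument, with $\{G_n\}$ and $\{K_n\}$ interchanged. I would first dispose of the degenerate cases. If $\overline{W}$ is empty take $U$ and $\iota$ empty; if $\overline{W}=\{*\}$, say $W\equiv g$ with $g\in G_m\setminus\{1\}$, pick any $h\in K_m\setminus\{1\}$, put $U\equiv h$ and let $\iota$ be the empty function — then $\{\coi(W_x,\iota_x,U_x)\}_{x\in X}\cup\{\coi(W,\iota,U)\}$ is coherent because every $\varpropto$-image appearing in the definition of coherence is now either empty or finite, by Lemma~\ref{coilemma2}, so the relevant classes in $\mathcal{A}$ are trivial. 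By the same device, adjoining one triple at a time and appealing to Lemma~\ref{ascendingchaincoi}, I may and will assume that for every letter $g$ occurring in $W$ the one-letter word $\equiv g$ already belongs to $\{W_x\}_{x\in X}$; the collection stays coherent and of size $<2^{\aleph_0}$, and every single-letter subword of $W$ now lies in $\Fine(\{W_x\}_{x\in X})$.

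Next I would run a transfinite recursion producing an increasing chain $(\mathcal{T}_\alpha)_\alpha$ of coherent collections of coi triples, each of size $<2^{\aleph_0}$, with $\mathcal{T}_0$ the collection just described and $\mathcal{T}_\lambda=\bigcup_{\alpha<\lambda}\mathcal{T}_\alpha$ at limits (coherent by Lemma~\ref{ascendingchaincoi}). Write $\Fine_\alpha$ for $\Fine$ of the domain-words of $\mathcal{T}_\alpha$, and declare $i\approx_\alpha j$ when $W$ restricted to the closed interval of $\overline{W}$ between $i$ and $j$ lies in $\Fine_\alpha$; since $\Sub(\Fine_\alpha)=\Fine_\alpha$ and $\Fine_\alpha$ is a subgroup of $\Red(\{G_n\}_{n\in\omega})$, this is an equivalence relation whose classes are intervals of $\overline{W}$, and (by the arrangement above) every singleton is a subinterval of a class. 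At stage $\alpha+1$: if $W\in\Fine_\alpha$ the recursion halts. Otherwise there is a job of one of two kinds. Kind (a): some $\approx_\alpha$-class $E$ has $W\upharpoonright E\notin\Fine_\alpha$; then $E$ has no greatest or no least element (else $E$ would be a closed interval, hence in $\Fine_\alpha$), so choosing a point of $E$ together with a cofinal and/or coinitial sequence one writes $W\upharpoonright E$ as an $\omega$-, $\omega^{*}$- or $(\omega^{*}+\omega)$-indexed concatenation of nonempty subwords, each a subinterval of a closed interval contained in $E$ and hence in $\Fine_\alpha$, and one or two applications of Proposition~\ref{omegatypeconcat} — in the $\omega^{*}$ direction after reversing all underlying orders, legitimate because a coi and the property of coherence are stable under such reversal — yield a coherent $\mathcal{T}_{\alpha+1}\supseteq\mathcal{T}_\alpha$ in which $W\upharpoonright E\in\Fine_{\alpha+1}$. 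Kind (b): every $\approx_\alpha$-class lies in $\Fine_\alpha$ but $W\notin\Fine_\alpha$; then the classes are the blocks $\{I_s\}$ of the condensation $\overline{W}/\!\approx_\alpha$, and after peeling off a greatest and/or a least block if present the remaining blocks are indexed by a countable order with more than one point and no adjacent pair — two adjacent $\Fine_\alpha$-blocks would have been a single class — hence by $\mathbb{Q}$; no union of $\geq 2$ of these blocks lies in $\Fine_\alpha$ (that would force two distinct classes to coincide), so Proposition~\ref{Qtypeconcat} gives a coherent $\mathcal{T}_{\alpha+1}$ in which that $\mathbb{Q}$-concatenation, and therefore $W$ itself, lies in $\Fine_{\alpha+1}$.

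It remains to see the recursion halts, necessarily at a stage $\gamma$ with $W\in\Fine_\gamma$. Each successor step adjoins to the current $\Fine$ at least one subword of $W$ that was not there before, so the sets $\Fine_\alpha\cap\Sub(W)$ form a strictly increasing chain at successors; assigning to each successor step a newly captured element of the countable set $\Sub(W)$ is injective, so the recursion has countable length. When it halts neither kind of job is available: no kind-(a) job means every class is in the current $\Fine$, and then no kind-(b) job means the condensation is a single point, i.e. $W$ is that (finished) class, so $W\in\Fine_\gamma$. A final application of Lemma~\ref{findsomerepresentative} to the coherent collection $\mathcal{T}_\gamma$ and the word $W$ then supplies the required $U$ and coi $\iota$ with $\{\coi(W_x,\iota_x,U_x)\}_{x\in X}\cup\{\coi(W,\iota,U)\}$ coherent. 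I expect the main obstacle to be purely order-theoretic bookkeeping: verifying that every ``limit-type'' class decomposes into finished pieces of exactly the shape consumed by Propositions~\ref{omegatypeconcat} and~\ref{Qtypeconcat}, and that kinds (a) and (b) genuinely exhaust the non-halting configurations — all of the work with coi triples and with $\mathcal{A}$ itself having already been packaged into the extension lemmas of the preceding two sections.
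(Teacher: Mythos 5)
Your overall strategy is essentially the paper's: seed the collection so that every singleton of $\overline{W}$ lies in $\Fine$, run a transfinite recursion that grows the coherent collection by repeated applications of Proposition~\ref{omegatypeconcat} and Proposition~\ref{Qtypeconcat} until $W$ itself lands in $\Fine$, and then close with Lemma~\ref{findsomerepresentative}. The phrasing via the equivalence relation $\approx_\alpha$ is a harmless repackaging of the paper's ``maximal interval through $i$'' condition, and your kinds (a) and (b) correspond exactly to the paper's cases.

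There is, however, a genuine gap in your termination argument. You write that each successor step captures a newly arrived element of ``the countable set $\Sub(W)$''. But $\Sub(W)$ need not be countable. If, say, $\overline{W}$ has order type $\mathbb{Q}$ and the letters of $W$ lie in pairwise distinct $G_n$'s, then distinct intervals $I\neq I'$ of $\overline{W}$ give distinct subwords $W\upharpoonright I\not\equiv W\upharpoonright I'$, and there are $2^{\aleph_0}$ intervals in $\mathbb{Q}$, so $|\Sub(W)|=2^{\aleph_0}$. Consequently your injection into $\Sub(W)$ only bounds the length of the recursion by $2^{\aleph_0}$, which is exactly not good enough: you need every stage to have size strictly below $2^{\aleph_0}$ in order to keep invoking Propositions~\ref{omegatypeconcat} and~\ref{Qtypeconcat}, and a chain of length $2^{\aleph_0}$ would break that.

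The paper closes this by a different counting argument: if the procedure ran for $\aleph_1$ steps, some single point $i$ of the countable set $\overline{W}$ would be treated at uncountably many successor stages, and the associated captured intervals through $i$ would be strictly nested; picking a point in each successive difference produces $\aleph_1$ pairwise distinct elements of the countable set $\overline{W}$, a contradiction. You can recover this in your setup by observing that the $\approx_\alpha$-class of a fixed $i$ is a weakly increasing interval-valued function of $\alpha$ and that it must strictly grow between two successor stages at which $i$'s class is the one being processed; the difference sets are then nonempty and pairwise disjoint subintervals of $\overline{W}$, giving the required countable bound. The rest of your sketch --- the degenerate cases, the decomposition into $\omega$-, $\omega^{*}$-, and $(\omega^{*}+\omega)$-type pieces handled by one or two applications of Proposition~\ref{omegatypeconcat} after an order reversal, the absence of adjacent finished blocks forcing the $\mathbb{Q}$-shape, and the final appeal to Lemma~\ref{findsomerepresentative} --- is sound and matches the paper's route.
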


\begin{proof}  As usual, we'll only prove (1).  If $W \equiv E$ then we let $U \equiv E$ and $\iota$ be the empty function, and it is obvious that $\{\coi(W_x, \iota, U_x)\}_{x \in X} \cup \{\coi(W, \iota, U)\}$ is coherent.  If $W \not\equiv E$ then we begin by extending the original collection by letting $\iota_i$ be the empty function and noticing that $\{\coi(W_x, \iota, U_x)\}_{x \in X} \cup \{\coi(W\upharpoonright \{i\}, \iota_i, E)\}_{i \in \overline{W}}$ is coherent.  Let $\mathcal{T}_0 = \{\coi(W_x, \iota, U_x)\}_{x \in X} \cup \{\coi(W\upharpoonright \{i\}, \iota_i, E)\}_{i \in \overline{W}}$.  Notice that $|\mathcal{T}_0| < 2^{\aleph_0}$, since $|X \cup \overline{W}| < 2^{\aleph_0}$.  For any collection $\mathcal{T} = \{\coi(W_z, \iota_z, U_z)\}_{z \in Z}$ of coi triples we let $h(\mathcal{T}) = \{W_z\}_{z \in Z}$.  Let $\prec$ be a well-order on the countable set $\overline{W}$.  We detail a procedure which will we will carry through until termination.

Suppose that we have defined coherent $\mathcal{T}_{\beta}$ for all $\beta < \gamma < \aleph_1$, where $\gamma > 0$, so that $\mathcal{T}_{\beta_0} \subseteq \mathcal{T}_{\beta_1}$ when $\beta_0 \leq \beta_1 < \gamma$, and $|\mathcal{T}_{\zeta} \setminus \bigcup_{\beta < \zeta}\mathcal{T}_{\beta}| \leq 1$ for all $0 < \zeta < \gamma$ .  We have that $\bigcup_{\beta < \gamma} \mathcal{T}_{\beta}$ is coherent by Lemma \ref{ascendingchaincoi}.  Also, $|\bigcup_{\beta < \gamma} \mathcal{T}_{\beta}| \leq |X|\cdot\aleph_0 + |\gamma| < 2^{\aleph_0}$.  Notice also that for every $i \in \overline{W}$ there exists an interval $i \in I \subseteq W$ such that $W \upharpoonright I \in \Fine(\bigcup_{\beta < \gamma} \mathcal{T}_{\beta})$, as indeed one can simply take $I = \{i\}$, since $\mathcal{T}_0 \subseteq \bigcup_{\beta < \gamma} \mathcal{T}_{\beta}$.

\begin{enumerate}[(a)]

\item If it is the case that for every $i \in \overline{W}$ there exists an interval $i \in I \subseteq \overline{W}$ such that $W \upharpoonright I \in \Fine(h(\bigcup_{\beta < \gamma} \mathcal{T}_{\beta}))$ and for any larger interval $I \subsetneq I' \subseteq \overline{W}$ we get $W \upharpoonright I' \notin \Fine(h(\bigcup_{\beta < \gamma} \mathcal{T}_{\beta}))$, then we terminate the procedure and let $\mathcal{T}_{\gamma} = \bigcup_{\beta < \gamma} \mathcal{T}_{\beta}$.

\item If (a) does not hold, then select $i \in \overline{W}$, minimal under $\prec$, such that such a maximal interval $I$ does not exist.  Suppose further that it is the case that there exist intervals $\{I_m\}_{m \in \omega}$ such that $i = \min I_m$ for all $m \in \omega$, with $W \upharpoonright I_m \in \Fine(h(\bigcup_{\beta < \gamma} \mathcal{T}_{\beta}))$ for all $m \in \omega$, $I_m \subsetneq I_{m+1}$, and with $W \upharpoonright \bigcup_{m \in \omega} I_m \notin \Fine(h(\bigcup_{\beta < \gamma} \mathcal{T}_{\beta}))$.  Then by Proposition \ref{omegatypeconcat} there exists a $U \in \Red(\{K_n\}_{n \in \omega})$ and coi $\iota$ from $W\upharpoonright \bigcup_{m \in \omega} I_m$ to $U$ such that $\mathcal{T}_{\beta} = \bigcup_{\beta < \gamma} \mathcal{T}_{\beta} \cup \{\coi(W \upharpoonright \bigcup_{m \in \omega} I_m, \iota, U)\}$ is coherent.  If such sequence $\{I_m\}_{m \in \omega}$ as above does not exist, then there exist intervals $\{I_m\}_{m \in \omega}$ such that $i = \max I_m$ for all $m \in \omega$, with $W \upharpoonright I_m \in \Fine(h(\bigcup_{\beta < \gamma} \mathcal{T}_{\beta}))$ for all $m \in \omega$, $I_m \subsetneq I_{m+1}$, and with $W \upharpoonright \bigcup_{m \in \omega} I_m \notin \Fine(h(\bigcup_{\beta < \gamma} \mathcal{T}_{\beta}))$.  We apply Proposition \ref{omegatypeconcat} to $(W \upharpoonright \bigcup_{m \in \omega} I_m)^{-1}$ to obtain $\iota$ and $U$ such that $\mathcal{T}_{\gamma} = \bigcup_{\beta < \gamma} \mathcal{T}_{\beta} \cup \{(\coi(W \upharpoonright \bigcup_{m \in \omega} I_m)^{-1}, \iota, U)\}$ is coherent.
\end{enumerate}

We claim that the process terminates after countably many steps.  Supposing that this is not the case, the process is carried out for all $0 < \gamma < \aleph_1$.  Then there is some $i \in \overline{W}$ which is considered in Case (b) uncountably often, and without loss of generality it is uncountably often the case that $i$ is the minimum of all elements of the sequence of intervals $\{I_m\}_{m \in \omega}$ considered.  Let $T \subseteq \aleph_1$ be the set of those $\gamma < \aleph_1$ on which this holds, so for every $\gamma \in T$ we have $\mathcal{T}_{\gamma} \setminus \bigcup_{\beta < \gamma} \mathcal{T}_{\beta} = \{\coi(W \upharpoonright I, \iota, U)\}$ for some $I$ (we'll label $I = I_{\gamma}$ as it is uniquely determined by $\gamma$) and $U$, where $i = \min I_{\gamma}$.  But now $I_{\gamma} \subsetneq I_{\gamma'}$ when $\gamma < \gamma'$ and $\gamma, \gamma' \in T$, and this is impossible for intervals in a countable totally ordered set $\overline{W}$, a contradiction.

This process terminates and produces $\mathcal{T}_{\gamma} \supseteq \mathcal{T}_0$, with $\gamma < \aleph_1$ and $|\mathcal{T}_{\gamma}| < 2^{\aleph_0}$, such that for every $i \in \overline{W}$ there exists an interval $i \in I \subseteq \overline{W}$ such that $W \upharpoonright I \in \Fine(h(\mathcal{T}_{\gamma}))$ and there is not a larger interval $I \subsetneq I' \subseteq \overline{W}$ with $W \upharpoonright I' \in \Fine(h(\mathcal{T}_{\gamma}))$.  Take $\{I_{\lambda}\}_{\lambda \in \Lambda}$ to be the set of all intervals in $\overline{W}$ such that for any $\lambda \in \Lambda$ and we have $W\upharpoonright I_{\lambda} \in \Fine(h(\mathcal{T}_{\gamma}))$ and for any larger interval $I_{\lambda} \subsetneq I' \subseteq \overline{W}$ we have $W \upharpoonright I' \notin \Fine(h(\mathcal{T}_{\gamma}))$.  We make the indexing injective, so that $I_{\lambda} \neq I_{\lambda'}$ whenever $\lambda \neq \lambda'$.  All $I_{\lambda}$ are nonempty since otherwise we could simply select $i \in \overline{W}$ and note that $W\upharpoonright \{i\} \in \Fine(h(\mathcal{T}_0)) \subseteq \Fine(h(\mathcal{T}_{\gamma}))$.  Also we know that the $I_{\lambda}$ are pairwise disjoint, for if $I_{\lambda} \cap I_{\lambda'} \neq \emptyset$ for $\lambda \neq \lambda'$ we have $W \upharpoonright I_{\lambda} \cup I_{\lambda'} \in \Fine(h(\mathcal{T}_{\gamma}))$.  Thus we endow $\Lambda$ with the natural order which places $\lambda < \lambda'$ if all elements of $I_{\lambda}$ are below all elements in $I_{\lambda'}$.  Note that $\Lambda$ cannot have two elements $\lambda < \lambda'$ which are immediately adjacent, for then $W \upharpoonright I_{\lambda} \cup I_{\lambda'} \in \Fine(h(\mathcal{T}_{\gamma}))$.

Certainly $\Lambda$ has at least one element as $W \not\equiv E$.  If $\Lambda$ has exactly one element then by Lemma \ref{findsomerepresentative} we can select $U \in \Red(\{K_n\}_{n \in \omega})$ and coi $\iota$ such that $\mathcal{T}_{\gamma} \cup \{\coi(W, \iota, U)\}$ is coherent, so in particular $\{\coi(W_x, \iota_x, U_x)\}_{x \in X} \cup \{\coi(W, \iota, U)\}$ is coherent and we are done.  On the other hand if $\Lambda$ has at least two elements then $\Lambda$ is countably infinite and dense-in-itself since there are no adjacent elements.  Let $\Lambda^*$ be subset of $\Lambda$ obtained by removing $\max\Lambda$ and $\min\Lambda$, if either or both exist.  Then $\Lambda^*$ is order isomorphic to $\mathbb{Q}$.  We'll assume that $\min\Lambda$ and $\max\Lambda$ each exist, and the modifications to the proof in the other cases are obvious.  It is clear that for any interval $\Lambda' \subseteq \Lambda$ where $\Lambda'$ has at least two points, we have $W \upharpoonright \bigcup_{\lambda \in \Lambda'} I_{\lambda} \notin \Fine(h(\mathcal{T}_{\gamma}))$ (by the conditions defining the $I_{\lambda}$).  Therefore by Proposition \ref{Qtypeconcat} we can select a $V \in \Red(\{K_n\}_{n \in \omega})$ and coi $\iota_*$ such that $\mathcal{T}_{\beta} \cup \{\coi(W\upharpoonright \bigcup_{\lambda \in \Lambda^*} I_{\lambda}, \iota_*, V)\}$ is coherent.  Then

$$W \equiv (W\upharpoonright I_{\min\Lambda})(W\upharpoonright \bigcup_{\lambda \in \Lambda^*} I_{\lambda})(W\upharpoonright I_{\max\Lambda}) \in \Fine(h(\mathcal{T}_{\gamma} \cup \{W\upharpoonright \bigcup_{\lambda \in \Lambda^*} I_{\lambda}\}))$$

\noindent so by Lemma \ref{findsomerepresentative} we can select $U \in \Red(\{K_n\}_{n \in \omega})$ and coi $\iota$ such that $$\mathcal{T}_{\gamma} \cup \{\coi(W\upharpoonright \bigcup_{\lambda \in \Lambda^*} I_{\lambda}, \iota_*, V)\} \cup \{\coi(W, \iota, U)\}$$ is coherent, so in particular $\{\coi(W_x, \iota_x, U_x)\}_{x \in X} \cup \{\coi(W, \iota, U)\}$ is coherent.

\end{proof}

\begin{proof}[Proof of Main Theorem]  We let $\{H_n\}_{n \in \omega}$ be a sequence of groups without elements of order $2$ such that $1 < |H_n| \leq 2^{\aleph_0}$ for each $n \in \omega$ .  We let $G_n = H_{2n} * H_{2n+1}$ for each $n \in \omega$.  Now each $G_n$ is a group without elements of order $2$ and has an element of infinite order (take $h \in H_{2n}\setminus \{1\}$ and $h' \in H_{2n+1} \setminus \{1\}$ and we have $hh'$ of infinite order).  Furthermore, $1 < |G_n| \leq 2^{\aleph_0}$.  By Lemma \ref{nicefactsaboutarch} (3) we know that $\mathcal{A}(\{G_n\}_{n \in \omega}) \simeq \mathcal{A}(\{H_n\}_{n \in \omega})$.  Thus it will be sufficient to prove that $\mathcal{A}(\{G_n\}_{n \in \omega}) \simeq \mathcal{A}$.  By definition we have $\mathcal{A} = \mathcal{A}(\{K_n\}_{n \in \omega})$ where each $K_n$ is infinite cyclic.

We claim that $|\Red(\{G_n\}_{n \in \omega})| = 2^{\aleph_0}$.  To see this, we note that $\Red(\{G_n\}_{n \in \omega}) \simeq \topprod_{n\in \omega} G_n$ has as quotient the group $\mathcal{A}(\{G_n\}_{n \in \omega})$, and as this latter group is of cardinality $2^{\aleph_0}$ (see \cite[Theorem 9]{CHM}), so $|\Red(\{G_n\}_{n \in \omega})| \geq 2^{\aleph_0}$.  On the other hand, let $\Omega$ be a symbol such that $\Omega \notin \bigcup_{n \in \omega} G_n$ and notice that the set $\mathcal{J}$ of functions from $\mathbb{Q}$ to $(\bigcup_{n \in \omega} G_n) \cup\{\Omega\}$ has $|\mathcal{J}| = 2^{\aleph_0}$, as $\aleph_0 \leq |\bigcup_{n \in \omega} G_n| \leq 2^{\aleph_0}$.  For a word $\overline{W} \in \W(\{G_n\}_{n \in \omega})$ we pick an order embedding $f_W: \overline{W} \rightarrow \mathbb{Q}$, and we let $F(W) \in \mathcal{J}$ be given by

\[
F(W)(s) = \left\{
\begin{array}{ll}
W(i)
                                            & \text{if } s = f_W(i), \\
\Omega                                & \text{if } s\notin f_W(\overline{W}).
\end{array}
\right.
\]

\noindent This function $F$ is easily seen to be an injection, so $$|\Red(\{G_n\}_{n \in \omega})| \leq |\W(\{G_n\}_{n \in \omega})| \leq 2^{\aleph_0}$$ and $|\Red(\{G_n\}_{n \in \omega})| = 2^{\aleph_0}$, and by the same argument $|\Red(\{K_n\}_{n \in \omega})| = 2^{\aleph_0}$.

We let $\prec_G$ be a well order on $\Red(\{G_n\}_{n \in \omega})$ such that every element has fewer than $2^{\aleph_0}$ elements below it.  Similarly let $\prec_K$ be a well order on $\Red(\{K_n\}_{n \in \omega})$ such that every element has fewer than $2^{\aleph_0}$ elements below it.  Each ordinal $\gamma$ can be written as a sum $\gamma = \zeta + n$ where $\zeta$ is $0$ or a limit ordinal and $n \in \omega$, and so we consider an ordinal even or odd according to the parity of $n$.  We inductively define a sequence of length $2^{\aleph_0}$ (considering the cardinal $2^{\aleph_0}$ now as an ordinal) of coi triples.  Let $W_0 \in \Red(\{G_n\}_{n \in \omega})$ be minimal under $\prec_G$ and by Proposition \ref{generalextension} select $U_0 \in \Red(\{K_n\}_{n \in \omega})$ and $\iota_0$ so that $\{\coi(W_0, \iota_0, U_0)\}$ is coherent.  Suppose that we have produced coi triples $\coi(W_{\beta}, \iota_{\beta}, U_{\beta})$ for all $\beta < \gamma < 2^{\aleph_0}$ so that $\{\coi(W_{\beta}, \iota_{\beta}, U_{\beta})\}_{\beta \leq \zeta}$ is coherent for each $\zeta < \gamma$.  We know (by Lemma \ref{ascendingchaincoi} in case $\gamma$ is a limit) that $\{\coi(W_{\beta}, \iota_{\beta}, U_{\beta})\}_{\beta < \gamma}$ is coherent.  If $\gamma$ is even then select by Lemma \ref{notdoneyet} $W_{\gamma} \in \Red(\{G_n\}_{n \in \omega}) \setminus \Fine(\{G_n\}_{n \in \omega})$, with $W_{\gamma}$ minimal such under the well-order $\prec_G$, and by Proposition \ref{generalextension} select $U_{\gamma} \in \Red(\{K_n\}_{n \in \omega})$ and $\iota_{\gamma}$ so that $\{\coi(W_{\beta}, \iota_{\beta}, U_{\beta})\}_{\beta < \gamma} \cup \{\coi(W_{\gamma}, \iota_{\gamma}, U_{\gamma})\}$ is coherent.  If $\gamma$ is odd then select by Lemma \ref{notdoneyet} $U_{\gamma} \in \Red(\{K_n\}_{n \in \omega}) \setminus \Fine(\{K_n\}_{n \in \omega})$, with $U_{\gamma}$ minimal such under the well-order $\prec_K$, and by Lemma \ref{generalextension} select $W_{\gamma} \in \Red(\{K_n\}_{n \in \omega})$ and $\iota_{\gamma}$ so that $\{\coi(W_{\beta}, \iota_{\beta}, U_{\beta})\}_{\beta < \gamma} \cup \{\coi(W_{\gamma}, \iota_{\gamma}, U_{\gamma})\}$ is coherent.  The collection $\{\coi(W_{\gamma}, \iota_{\gamma}, U_{\gamma})\}_{\gamma < 2^{\aleph_0}}$ is coherent by Lemma \ref{ascendingchaincoi}, and it is also clear that $\Fine(\{W_{\gamma}\}_{\gamma < 2^{\aleph_0}}) = \Red(\{G_n\}_{n \in \omega})$ and also $\Fine(\{U_{\gamma}\}_{\gamma < 2^{\aleph_0}}) = \Red(\{K_n\}_{n \in \omega})$.  Therefore by Theorem \ref{coicollectiongivesiso} we obtain an isomorphism $\mathcal{A}(\{G_n\}_{n \in \omega}) \simeq \mathcal{A}(\{K_n\}_{n \in \omega})$ and the proof is complete.
\end{proof}

We finish by illustrating some of the difficulty which arises with elements of order $2$.

\begin{example}\label{nastyword}  Suppose that $\{G_n\}_{n \in \omega}$ and $\{K_n\}_{n \in \omega}$ are collections of groups such that each $G_n$ has an element of order $2$ and none of the $K_n$ has such an element.  Select $g_n \in G_n$ of order $2$ for each $n \in \omega$.  Consider the word $W$ given by the projections

\begin{center}

$p_0(W) \equiv g_0$

$p_1(W) \equiv g_1 g_0 g_1$

$p_2(W) \equiv g_2 g_1 g_2 g_0 g_2 g_1 g_2$

$p_3(W) \equiv g_3 g_2 g_3 g_1 g_3 g_2 g_3 g_0 g_3 g_2 g_3 g_1 g_3 g_2 g_3$

$\vdots$

\end{center}

\noindent It is evident that $W$ is reduced and that $W \equiv W^{-1}$.  Moreover we can write $W$ as $W \equiv W_1g_0W_1$ where $W_1$ is defined by

\begin{center}

$p_0(W_1) \equiv E$

$p_1(W_1) \equiv g_1$

$p_2(W_1) \equiv g_2 g_1 g_2$

$p_3(W_1) \equiv g_3 g_2 g_3 g_1 g_3 g_2 g_3$

$\vdots$

\end{center}

\noindent and similarly $W_1 \equiv W_1^{-1}$, and $W_1 \equiv W_2g_1W_2$ where

\begin{center}

$p_0(W_2) \equiv E$

$p_1(W_2) \equiv E$

$p_2(W_2) \equiv g_2$

$p_3(W_2) \equiv g_3 g_2 g_3$

$\vdots$

\end{center}

\noindent and so forth.  Each of these words $W, W_1, W_2, \ldots$ is of order type $\mathbb{Q}$, and $[[E]] = [[W]] = [[W_1]] = [[W_2]] = \cdots$ since they are each conjugate in $\Red(\{G_n\}_{n \in \omega})$ to a finite word.  Although these words are highly symmetric and trivial in $\mathcal{A}(\{G_n\}_{n \in \omega})$, there are subwords which are not, as for example

$$g_0W_3g_2W_5g_4W_7g_6 \cdots.$$

\noindent  It is not obvious how to select $U \in \Red(\{K_n\}_{n \in \omega})$ and coi $\iota$ so that $\{\coi(W, \iota, U)\}$ is coherent; the curious reader can look at how Proposition \ref{Qtypeconcat} uses the fact that the groups have no order $2$ elements.

\end{example}

\end{section}

\section*{Appendix}

In this appendix we shall state and prove a fact which is much more general than Lemma \ref{Qwordreduced}.  The purpose is twofold.  First, the reader can verify Lemma \ref{Qwordreduced} in a setting where there are fewer distracting symbols.  Second, the more general statement could be used towards an attack of a theorem involving involutions.

We begin with a word $V: \mathbb{Q} \rightarrow \{a_n^{\pm 1}\}_{n \in \omega}$ which is in $\Red(\{\langle a_n \rangle_{\infty}\}_{n \in \omega})$, where $\langle a_n \rangle_{\infty}$ denotes the infinite cyclic group generated by the symbol $a_n$.  Notice such a $V$ is a very special word- none of the outputs of the word $V$ are, for example, $a_2^7$.  The only group elements in the range of $V$ are of form $a_n$ or $a_n^{-1}$.  We take a sequence of groups $\{K_n\}_{n \in \omega}$, each of which has an element of infinite order.  We shall not assume that any of the $K_n$ are free of involutions.  We let sequence $\{U_m\}_{m \in \omega}$ of words in $\Red(\{K_n\}_{n \in \omega})$ have the following characteristics:  both $\min\overline{U_m}$ and $\max\overline{U_m}$ exist, and $U_m(\max\overline{U_m}) = U_m(\min\overline{U_m}) = b_m$, where $b_m$ is of infinite order in $K_m$.  Also, $d(U_m \upharpoonright (\overline{U_m} \setminus \{\min\overline{U_m}, \max\overline{U_m}\})) > m = d(b_m)$.  Thus it could be the case, for example, that $\overline{U_m} \setminus \{\min\overline{U_m}, \max\overline{U_m}\} = \emptyset$.  Define $U \equiv \prod_{s \in \mathbb{Q}} T_s$ where

\[
T_s \equiv \left\{
\begin{array}{ll}
U_m
                                            & \text{if }  V(s) \equiv x_m, \\
U_m^{-1}                                        & \text{if }V(s) \equiv x_m^{-1}.
\end{array}
\right.
\]

\noindent Thus the  $U$ is obtained from the word $V$ by replacing each instance of $x_m^{\pm 1}$ with $U_m^{\pm 1}$.   

\begin{theorem}\label{intheappendix}  The function $U$ defined above is an element in $\Red(\{K_n\}_{n \in \omega})$.
\end{theorem}

\begin{proof}  First, it is clear that $U$ is indeed a word since $d(U_m) = m$ and for each $n \in \omega$ the set $\{s \in \mathbb{Q} \mid d(T_s) \leq n\} = \{s \in \mathbb{Q} \mid d(V(s)) \leq n\}$ is finite (because $V$ is a word).  Define $P:\mathbb{Q} \rightarrow \omega$ by $T_s \equiv U_{P(s)}$ or $T_s \equiv U_{P(s)}^{-1}$.  Since each word $U_m$ begins and ends with the same letter, and the letter is not an involution, the definition of $P$ is unambiguous.

We suppose for contradiction that $U$ is not reduced.  Since each subword $T_s$ is reduced and nonempty and $\mathbb{Q}$ is order dense, by Proposition \ref{reductionscheme} (1) we have a nonempty reduction scheme $\mathcal{S}$ on $U \upharpoonright I$ such that $\bigcup_{C \in \mathcal{S}} \set(C) = I$ and $\pi(U \upharpoonright I, C) \equiv E$ for all $C \in \mathcal{S}$.  Our strategy will be to modify the scheme $\mathcal{S}$ into a new, cleaner reduction scheme in which for each $s \in \mathbb{Q}$ such that elements in $\overline{T_s}$ participate in the new scheme there exists an $s' \in \mathbb{Q}$ such that $T_s \equiv T_{s'}^{-1}$ and the reduction pairs off exactly one element of $\overline{T_s}$ with exactly one element of $\overline{T_{s'}}$.  Thus the scheme will naturally lift to a reduction on the word $V$, which gives a contradiction.

Of course, we will need to rely on our hypotheses in order to obtain such a nice reduction scheme.  Specifically, the fact that each word $U_m$ is reduced and begins and ends with a ``high wall'' makes it so that if $T_s \equiv U_m^{\pm 1}$ is a subword having an element which participates in the reduction, and $m$ is minimal for which such an $s$ exists, then the beginning and/or the ending letter of $T_s$ must also participate in the reduction as well (by the second point in the definition of a reduction scheme) and can only be in a component with first or last letters of a similar such $T_{s'}$ (by the minimality of $m$).  This allows us to march through the reduction scheme and make appropriate adjustments towards our goal.

To begin our attack, we take $$N_0' = \min\{m \in \omega:(\exists s \in \mathbb{Q}) \overline{T_s} \cap I \neq \emptyset \wedge T_s \equiv U_m\text{ or }T_s \equiv U_m^{-1}\}.$$  We claim that there exists a component $C = (i_0; \ldots; i_k) \in \mathcal{S}$ such that $d(C) = N_0'$ and for each $0 \leq j \leq k$ we have $U(i_j) \in \{b_{N_0'}^{\pm 1}\}$.  To see this, take $s' \in \mathbb{Q}$ such that $\overline{T_{s'}} \cap I \neq \emptyset$ and $T_{s'} \equiv U_{N_0'}$ or $\equiv U_{N_0'}^{-1}$.  Take $C' = (i_0'; \ldots; i_m') \in \mathcal{S}$ such that $\set(C') \cap \overline{T_{s'}} \neq \emptyset$, say $0 \leq j' \leq m$ has $i_{j'} \in \set(C') \cap \overline{T_{s'}}$.  If $d(C') = N_0'$ we will take $C = C'$, so suppose that this is not the case.  As $\pi(U \upharpoonright I, C) = E$ and the word $U$ is simple, we know $m > 0$.  Let without loss of generality $j' + 1 \leq m$ (otherwise $0 \leq j' - 1$ and the proof will be similar).  As $T_{s'}$ is reduced, we know $i_{j' + 1} \notin \overline{T_{s'}}$, say $i_{j' + 1} \in \overline{T_{s''}}$ and $s' < s''$ in $\mathbb{Q}$.  As $\mathcal{S}$ is a reduction scheme and $i_{j'} < \max(\overline{T_{s'}}) < i_{j' + 1}$ there exists $C = (i_0; \ldots; i_k) \in \mathcal{S}$ such that $\max(\overline{T_{s'}}) = i_j \in \set(C)$.  As $T_{s'} \equiv U_{N_0'}$ or $\equiv U_{N_0'}^{-1}$ we know $U(i_j) \in \{b_{N_0'}^{\pm 1}\}$ so in particular $d(C) = N_0'$.  In either case we have found a component $C \in \mathcal{S}$ with $d(C) = N_0'$ and $\set(C) \cap \overline{T_{s'}} \neq \emptyset$.  Letting $J = \{s \in \mathbb{Q} \mid \set(C) \cap \overline{T_s} \neq \emptyset\}$, by minimality of $N_0'$ we know that $T_s \equiv U_{N_0'}$ or $\equiv U_{N_0'}^{-1}$ for each $s \in J$.  Then as $d(C) = N_0'$, each element of $\set(C)$ is either a $\max(\overline{T_s})$ or a $\min(\overline{T_s})$ for some $s \in J$.  Therefore $U(i_j) \in \{b_{N_0'}^{\pm 1}\}$ for each $i_j \in \set(C)$.

As $\pi(U, C) \equiv E$ we have that there exist some $0 \leq j \leq k$ for which $U(i_j) = b_{N_0'}$ and there also exist some $0 \leq j \leq k$ for which $U(i_j) = b_{N_0'}^{-1}$ (we are using the fact that $b_{N_0'}$ has infinite order).  Then there exists some $0 \leq j' \leq k$ for which $U(i_{j'}) = (U(i_{j'+1}))^{-1}$.  Then the reduction scheme $\{C' \in \mathcal{S}: \set(C') \cap (i_{j'}, i_{j'+1}) \neq \emptyset\}$ witnesses that $U \upharpoonright (i_{j'}, i_{j'+1}) \sim E$.  Letting $i_{j'} \in \overline{T_{s_0'}}$ and $i_{j'+1} \in \overline{T_{s_1'}}$ we have $i_{j'} \in \{\min\overline{T_{s_0'}}, \max\overline{T_{s_0'}}\}$ and similarly $i_{j'+1} \in \{\min\overline{T_{s_1'}}, \max\overline{T_{s_1'}}\}$.  If $i_{j'} = \min\overline{T_{s_0'}}$ then we claim that $i_{j'+1} = \max\overline{T_{s_1'}}$, for otherwise we have $(i_{j'}, i_{j'+1}) \cap \{i \in \overline{U}: d(U(i)) = N_0'\}$ is odd and so the word $p_{N_0'}(U \upharpoonright (i_{j'}, i_{j'+1}))$ does not represent the trivial element in $G_{N_0'}$, since $b_{N_0'}$ is of infinite order, a contradiction.  By the same token, if $i_{j'} = \max\overline{T_{s_0'}}$ then $i_{j'+1} = \min\overline{T_{s_1'}}$.  In either case, we see that $U \upharpoonright (\max\overline{T_{s_0'}}, \min\overline{T_{s_1'}}) \sim E$, for even if $i_{j'} = \min\overline{T_{s_0'}}$ and $i_{j'+1} = \max\overline{T_{s_1'}}$ we have that

\begin{center}

$E \sim (T_{s_0'}\upharpoonright (\overline{T_{s_0'}} \setminus \{\min\overline{T_{s_0'}}\}))^{-1} E (T_{s_0'}\upharpoonright (\overline{T_{s_0'}} \setminus \{\min\overline{T_{s_0'}}\}))$

$\equiv (T_{s_0'}\upharpoonright \overline{T_{s_0'}} \setminus \{\min\overline{T_{s_0'}}\})^{-1} E (T_{s_1'}\upharpoonright( \overline{T_{s_1'}} \setminus \{\max\overline{T_{s_1'}}\}))^{-1}$

$\sim  (T_{s_0'}\upharpoonright \overline{T_{s_0'}} \setminus \{\min\overline{T_{s_0'}}\})^{-1} (U\upharpoonright (i_{j'}, i_{j'+1})) (T_{s_1'}\upharpoonright( \overline{T_{s_1'}} \setminus \{\max\overline{T_{s_1'}}\}))^{-1}$

$\sim U\upharpoonright (\max\overline{T_{s_0'}}, \min\overline{T_{s_1'}})$.

\end{center}

\noindent  Thus we may replace the interval $I$ with the nonempty interval $$(\max\overline{T_{s_0'}}, \min\overline{T_{s_1'}})$$ and thus get that $I$ is an open interval such that $I = \bigcup_{I \cap \overline{T_s} \neq \emptyset} \overline{T_s}$, and also replace the old reduction scheme $\mathcal{S}$ with $\{C' \in \mathcal{S}: \set(C) \cap (\max\overline{T_{s_0'}}, \min\overline{T_{s_1'}}) \neq \emptyset\}$.  Thus in our proof we will therefore assume that $I = \bigcup_{I \cap \overline{T_s} \neq \emptyset} \overline{T_s}$ and that $\mathcal{S}$ is a reduction scheme on $U\upharpoonright I$ such that $\pi(U \upharpoonright I, C) \equiv E$ for all $C \in \mathcal{S}$ and $\bigcup_{C\in \mathcal{S}}\set(C) = I$.  Let $\mathcal{I} \subseteq \mathbb{Q}$ be the open interval $\{s \in \mathbb{Q}: I \cap \overline{T_s} \neq \emptyset\}$.  We let $Q: I \rightarrow \mathcal{I} \subseteq \mathbb{Q}$ be the surjective function defined by $Q(i)= s$ where $i \in \overline{T_s}$.

Let $\{N_0, N_1, \ldots\} = \{P(s):(\exists s \in \mathbb{Q}) \overline{T_s} \cap I \neq \emptyset\}$, with $N_k < N_{k+1}$, so in particular $N_0 = \min\{P(s):(\exists s \in \mathbb{Q}) \overline{T_s} \cap I \neq \emptyset\}$.  Suppose first that $C = (i_0; \ldots; i_k) \in \mathcal{S}$ and $s_0, s_1 \in \mathbb{Q}$ are such that $\overline{T_{s_0}} \cap \set(C) \neq \emptyset$ and $\overline{T_{s_1}} \cap \set(C) \neq \emptyset$ and $P(s_0) = N_0$.  We'll show that $P(s_1) = N_0$.  If this is not the case, there exist $i_l, i_{l+1} \in \set(C)$ with $i_l \in \overline{T_{s_0'}}$ and $i_{l+1} \in \overline{T_{s_1'}}$ such that either $P(s_0') = N_0$ and $P(s_1') > N_0$, or such that $P(s_0') > N_0$ and $P(s_1') = N_0$.  If without loss of generality $P(s_0') = N_0$ and $P(s_1') > N_0$, we know by Definition \ref{redschdef} condition (2) and Proposition \ref{reductionscheme} part (1) that $U \upharpoonright (i_l, i_{l+1}) \sim E$, however $(i_l, i_{l+1}) \cap \{i \in \overline{U}: d(U(i)) = N_0\}$ is odd and so the word $p_{N_0}(U \upharpoonright (i_l, i_{l+1}))$ does not represent the trivial element in $G_{N_0}$, as $b_{N_0}$ is of infinite order, a contradiction.  Now, suppose that it is the case that whenever $k \leq K$ and $C \in \mathcal{S}$ and $s_0, s_1 \in \mathbb{Q}$ are such that $\overline{T_{s_0}} \cap \set(C) \neq \emptyset$ and $\overline{T_{s_1}} \cap\set(C) \neq \emptyset$ and $P(s_0) = N_k$, then $P(s_1) = N_k$.  Let $C = (i_0; \ldots; i_p) \in \mathcal{S}$ and $s_0, s_1 \in \mathbb{Q}$ be such that $\overline{T_{s_0}} \cap \set(C) \neq \emptyset$ and $\overline{T_{s_1}} \cap\set(C) \neq \emptyset$ and $P(s_0) = N_{K+1}$.  We'll show $P(s_1) = N_{K+1}$.  If this is not the case then there exist $i_l, i_{l+1} \in \set(C)$ with $i_l \in \overline{T_{s_0'}}$ and $i_{l+1} \in \overline{T_{s_1'}}$ such that either $P(s_0') = N_{K+1}$ and $P(s_1') > N_{K+1}$, or such that $P(s_0') > N_{K+1}$ and $P(s_1') = N_{K+1}$.  Without loss of generality $P(s_0') = N_{K+1}$ and $P(s_1') > N_{K+1}$.  Letting $Y$ be the finite set $\{s \in \mathbb{Q}: P(s) \leq K \wedge \overline{T_s} \subseteq (i_l, i_{l+1})\}$ we have $U \upharpoonright ((i_0, i_1) \setminus \bigcup_{s \in Y} \overline{T_s}) \sim E$ as witnessed by the reduction scheme $\mathcal{S}' = \{C' \in \mathcal{S}: \set(C') \cap (i_l, i_{l+1}) \neq \emptyset \wedge \set(C) \cap \bigcup_{s \in Y}\overline{T_s} = \emptyset\}$ (here we are using the fact that our induction hypothesis implies that $\bigcup_{s \in Y}\overline{U_s} = \bigcup_{C' \in \mathcal{S}, \set(C') \cap \bigcup_{s \in Y}\overline{T_s} \neq \emptyset} \set(C')$).  However the set $M = (i_0, i_1) \cap \{i \in \overline{U}: d((U(i))) = N_{K+1}\} \setminus \bigcup_{s \in Y}\overline{T_s}$ is of odd cardinality, and  we have $U(i) \in \{b_{N_{K+1}}, b_{N_{K+1}}^{-1}\}$ for each $i \in M$, and since $b_{N_{K+1}}$ is of infinite order we get in particular that $p_{N_{K+1}}(U \upharpoonright (i_0, i_1) \setminus \bigcup_{s \in Y} \overline{T_s}) = \sum_{i \in M} U(i)$ is not trivial, contradiction.  What we have just shown is that for each $C \in \mathcal{S}$ the function $P\circ Q \upharpoonright \set(C)$ is constant.  We also know that for each $C \in \mathcal{S}$ the function $Q \upharpoonright \set(C)$ is injective, since each $T_s$ is reduced.

Now we proceed with the modifications to the scheme $\mathcal{S}$.  For $C = (i_0; \ldots; i_k) \in \mathcal{S}$ such that there exists $i \in \set(C)$ with $i \in \overline{T_s}$ and $d(C) = P(s)$, we have $i \in \{\min\overline{T_s}, \max\overline{T_s}\}$ and by the preceeding paragraph we have that each $i_j \in \set(C)$ has some $s_j \in \mathcal{I}$ with $i_j \in \{\min\overline{T_{s_j}}, \max\overline{T_{s_j}}\}$ and $d(C) = P(s_j)$.  More particularly we have that $U(i_j) \in \{b_{d(C)}, b_{d(C)}^{-1}\}$ for all $0 \leq j \leq k$.  Since $\pi(U, C) = 0$ we know $U(i_j) = b_{d(C)}$ for some $j$ and $U(i_j) = b_{d(C)}^{-1}$ for some other values of $j$.  Then for some $0\leq j' \leq k$ we have $U(i_{j'}) = (U(i_{j' +1}))^{-1}$, and we can replace $C$ in $\mathcal{S}$ with two components $(i_{j'}; i_{j' +1})$, $C' = (i_{0}; \ldots; i_{j'-1}; i_{j' + 2}; \ldots; i_{k})$.  If $|\set(i_{0}, \ldots, i_{j'-1}, i_{j' + 2}, \ldots, i_{k})| > 2$ then performing the same analysis on the finite sequence $(i_{0}; \ldots; i_{j'-1}; i_{j' + 2}; \ldots; i_{k})$ we produce two components $C', C''$ with $|\set(C')| = 2$ and $|\set(C'')|$ being of positive even cardinality.  By performing finitely many steps we determine that we can replace $C$ with $|\set(C)|/2$ components.  Thus we can assume that for each $s \in \mathcal{I}$ we have that $\min \overline{T_s} \in \set(C)$ and $C \in \mathcal{S}$ implies that $|\set(C)| = 2$ and similarly for $\max \overline{T_s}$.

Now, if $s \in \mathcal{I}$, $i = \min\overline{T_s}$, $i \in \set(C)$, $\{i'\} = \set(C) \setminus \{i\}$, with $i' \in \overline{T_{s'}}$ then $i' = \max\overline{T_{s'}}$.  To see this, we know of course that $i' \in \{\min\overline{T_{s'}}, \max\overline{T_{s'}}\}$, and for contradiction if $i' = \min\overline{T_{s'}}$ and say $i < i'$ then the word $V \equiv U \upharpoonright ((i, i') \setminus\bigcup_{s \in \mathcal{I}, P(s) < d(C)} \overline{T_s})$ is not $\sim E$ since $p_{d(C)}(V)$ is $b_{d(C)}$ raised to an odd power, on the other hand $p_{d(C)}(V) \sim E$ by Proposition \ref{reductionscheme} part (1) (using the reduction scheme $\{C' \in \mathcal{S}: \set(C') \cap (i, i') \neq \emptyset \wedge d(C') < d(C)\}$), contradiction.  A similar proof works when $i' < i$.  By similar reasoning if $s \in \mathcal{I}$, $i = \max\overline{T_s}$, $i \in \set(C)$, $\{i'\} = \set(C) \setminus \{i\}$, with $i' \in \overline{T_{s'}}$ then $i' = \min\overline{T_{s'}}$.

We define a collection $\mathcal{P}$ of ordered pairs of elements of $\mathcal{I}$.  Consider a finite sequence $D = (s_0; \ldots; s_k)$ such that 

\begin{center}

$(\max\overline{T_{s_0}}, \min\overline{T_{s_1}}), (\max\overline{T_{s_1}}, \min\overline{T_{s_2}}), \ldots$

$(\max\overline{T_{s_{k-1}}}, \min\overline{T_{s_k}}), (\min\overline{T_{s_0}}, \max\overline{T_{s_k}}) \in \mathcal{S}$

\end{center}

\noindent Since $\bigcup_{s \in \mathcal{I}}\overline{T_s} = I$ and by the arguments above, we know that each element of $\mathcal{I}$ occurs in a unique such finite sequence.  Also for such a $D$ we have $P(s_0) = \cdots = P(s_k)$, and for each $0 \leq j < k$ we have $T_{s_j} \equiv U_{s_{j+1}}^{-1}$ and $T_{s_j} = T_{s_{j + 1}}^{-1}$, and also $T_{s_0} \equiv T_{s_k}^{-1}$.  We take ordered pairs $(s_0; s_1), \ldots, (s_{k-1}; s_k)$ and let $\mathcal{P}$ be the set of all such ordered pairs for all such sequences $D$.  We observe that

\begin{itemize}

\item $(s; s') \in \mathcal{P}$ implies $s < s'$ in $\mathbb{Q}$;

\item $(s; s') \in \mathcal{P}$ implies $T_{s} \equiv T_{s'}^{-1}$;

\item $(s; s') \in \mathcal{P}$ implies $V(s) \equiv V(s')^{-1}$;

\item $(\forall s'' \in \mathcal{I})(\exists ! (s; s') \in \mathcal{P}) s'' = s \vee s'' = s'$;

\item for $(s; s'), (s''; s''') \in \mathcal{P}$ such that the intervals $(s, s'), (s'', s''') \subseteq \mathcal{I}$ have nonempty intersection, we have that $(s, s') \subseteq (s'', s''')$ or $(s'', s''') \subseteq (s, s')$.

\end{itemize}

Now we see that the nonempty subword $\prod_{s \in \mathcal{I}} V(s)$ of $V$ is $\sim E$ via the reduction scheme $\mathcal{P}$.  Thus $W$ is not reduced, contrary to assumption, a contradiction.

\end{proof}

One can derive Lemma \ref{Qwordreduced} as a corollary of Theorem \ref{intheappendix} by defining the word $V: \mathbb{Q} \rightarrow \{x_n^{\pm 1}\}_{n \in \omega}$ by $V(s) = x_m$ if $W \upharpoonright I_s \equiv W_m$ and $V(s) = x_m^{-1}$ if $W \upharpoonright I_s \equiv W_m^{-1}$.  This definition of $V$ is unambiguous since in the setting of Lemma \ref{Qwordreduced} all groups are without involutions.  The word $V$ is reduced, as a reduction on $V$ can be easily translated into a reduction on the reduced word $W$.  Now, the word $U$ of Lemma \ref{Qwordreduced} is obtained precisely as in the setting of Theorem \ref{intheappendix}, so it is reduced.

\section*{Acknowledgement}

The author thanks an earlier referee for pointing out some clarifications towards improving the paper.

\end{document}